\newcommand{\ad}{\mathrm{ad}}
\newcommand{\Z}{\mathbb{Z}}
\newcommand{\R}{\mathbb{R}}
\newcommand{\C}{\mathbb{C}}
\newcommand{\G}{\mathbf{G}}
\newcommand{\J}{\mathbf{J}}
\DeclareMathOperator{\Nr}{Nr}
\DeclareMathOperator{\depth}{depth}
\DeclareMathOperator{\ord}{ord}
\DeclareMathOperator{\GL}{GL}
\DeclareMathOperator{\SL}{SL}
\DeclareMathOperator{\Sp}{Sp}
\DeclareMathOperator{\Mp}{Mp}
\DeclareMathOperator{\U}{U}
\DeclareMathOperator{\SU}{SU}
\DeclareMathOperator{\id}{id}
\theoremstyle{plain}
\newtheorem{thm}{Theorem}[section]
\newtheorem*{thm*}{Theorem}
\newtheorem{prop}[thm]{Proposition}
\newtheorem{lem}[thm]{Lemma}
\newtheorem{cor}[thm]{Corollary}
\theoremstyle{definition}
\theoremstyle{remark}
\newtheorem{rem}[thm]{Remark}
\newtheorem*{claim*}{Claim}
\title{Depth preserving property of the local Langlands correspondence for non-quasi-split unitary groups}
\author{Masao Oi}
\address{Graduate School of Mathematical Sciences, 
the University of Tokyo, 3-8-1 Komaba, Meguro-ku, Tokyo 153-8914, Japan.}
\email{masaooi@ms.u-tokyo.ac.jp}
\begin{document}

\begin{abstract}
In this paper, we extend our result on a depth preserving property of the local Langlands correspondence for quasi-split unitary groups (\cite{Oi:2018a}) to non-quasi-split unitary groups by using the local theta correspondence.
The key ingredients are a depth preserving property of the local theta correspondence proved by Pan and a description of the local theta correspondence via the local Langlands correspondence established by Gan--Ichino.
To combine them, we compare splittings for metaplectic covers of unitary groups constructed by Kudla with those constructed by Pan.
\end{abstract}

\subjclass[2010]{Primary: 22E50; Secondary: 11F70}
\keywords{metaplectic cover, local theta correspondence, local Langlands correspondence, depth of representations}

\maketitle

\section{Introduction}

Let $F$ be a $p$-adic field.
We consider a connected reductive group $\G$ over $F$.
Then the conjectural \textit{local Langlands correspondence} for $\G$ predicts that there exists a natural map from the set of equivalence classes of irreducible smooth representations of $\G(F)$ to the set of conjugacy classes of $L$-parameters of $\G$.
The local Langlands correspondence for general connected reductive groups has not been known at present.
However, thanks to recent developments based on works of a lot of people, for several groups, the correspondence was established.
In particular, for 
\begin{itemize}
\item
general linear groups (\cite{MR1876802}), 
\item
quasi-split symplectic or orthogonal groups (\cite{MR3135650}),
\item
quasi-split unitary groups (\cite{MR3338302}), and
\item
non-quasi-split unitary groups (\cite{KMSW}),
\end{itemize}
the local Langlands correspondence has been established.

In the case of general linear groups, this correspondence is characterized by the theory of $\varepsilon$-factors and $L$-factors of representations.
In the cases of classical groups listed above, the correspondence is characterized by the theory of endoscopy.
However it is known that, beyond these characterizations, the local Langlands correspondence for these groups satisfies a lot of properties.

One example for such phenomena is a \textit{depth preserving property} of the local Langlands correspondence for general linear groups.
To be more precise, let us recall the notion of the \textit{depth} of representations.
When $\G=\GL_{1}$, the group $\G(F)=F^{\times}$ has a maximal open compact subgroup $\mathcal{O}_{F}^{\times}$ (unit group) and its filtration $\{1+\mathfrak{p}_{F}^{n}\}_{n\in\Z_{>0}}$ (higher unit groups).
As its generalization, for a general connected reductive group $\G$ over $F$, we can define various open compact subgroups (called \textit{parahoric subgroups}) of $\G(F)$ and their filtrations (called \textit{Moy--Prasad filtrations}).
By using these subgroups of $\G(F)$, for each irreducible representation $\pi$ of $\G(F)$, we can define its depth (denoted by $\depth(\pi)$), which expresses how large subgroups having an invariant part in the representation are (see Section \ref{subsec:q-spl} for the definition).
On the other hand, for the inertia subgroup $I_{F}$ of the Weil group $W_{F}$ of $F$, we can define its ramification filtration $\{I_{F}^{\bullet}\}$.
Then, by noting that an $L$-parameter of $\G$ is an admissible homomorphism from $W_{F}\times\SL_{2}(\C)$ to the $L$-group of $\G$, we can define the depth of an $L$-parameter $\phi$ (denoted by $\depth(\phi)$), which measures how deep the ramification of the $L$-parameter $\phi$ is (see Section \ref{subsec:q-spl} for the definition).
Then it is known that the local Langlands correspondence for $\GL_{N}$ preserves the depth.
Namely, we have the following:
\begin{thm}[{\cite[2.3.6]{MR2508720} and \cite[Proposition 4.5]{MR3579297}}]
Let $\pi$ be an irreducible smooth representation of $\GL_{N}(F)$ and $\phi$ the $L$-parameter of $\GL_{N}$ corresponding to $\pi$ under the local Langlands correspondence for $\GL_{N}$.
Then we have
\[
\depth(\pi)=\depth(\phi).
\]
\end{thm}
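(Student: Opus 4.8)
The plan is to reduce to the case of supercuspidal representations and then to match the two sides using the explicit constructions available for them. By the Bernstein--Zelevinsky classification, every irreducible smooth representation $\pi$ of $\GL_N(F)$ is the Langlands quotient of a standard module parabolically induced from a product of essentially square-integrable representations, each of which is a Zelevinsky segment $\langle\Delta_i\rangle$ attached to a supercuspidal representation $\rho_i$ of some $\GL_{n_i}(F)$; the associated $L$-parameter is $\phi=\bigoplus_i(\phi_{\rho_i}\otimes\chi_i)\boxtimes\mathrm{Sp}(k_i)$, where each $\phi_{\rho_i}$ is an irreducible representation of $W_F$, each $\chi_i$ is an unramified character, and $\mathrm{Sp}(k_i)$ is the $k_i$-dimensional irreducible representation of $\SL_2(\C)$. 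On the automorphic side, $\depth$ is insensitive to twisting by $\nu=|\det|_F$, passing from $\rho_i$ to $\langle\Delta_i\rangle$ leaves the depth unchanged, and the Langlands quotient of a standard module for $\GL_N$ has the same depth as the module itself, so $\depth(\pi)=\max_i\depth(\rho_i)$. On the Galois side, neither the factor $\SL_2(\C)$ nor an unramified twist affects the ramification filtration $\{I_F^{\bullet}\}$, so $\depth(\phi)=\max_i\depth(\phi_{\rho_i})$. Hence it suffices to treat the case in which $\pi$ is supercuspidal and $\phi$ is an irreducible $N$-dimensional representation of $W_F$.

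For the supercuspidal case I would use the two explicit descriptions. By Bushnell--Kutzko, $\pi=\cInd_{\J}^{\GL_N(F)}\Lambda$ for an extended maximal simple type $\Lambda$ attached to a simple stratum $[\mfA,n,0,\beta]$, and $\depth(\pi)$ is read off from the level $n$ and the ramification index $e(\mfA)$ of the associated hereditary order. On the other side, an irreducible $\phi$ has a single ramification break, which computes $\depth(\phi)$. When $p\nmid N$, Bushnell--Henniart's \emph{essentially tame} local Langlands correspondence presents $\phi$ as $\Ind_{E/F}^{W_F}\chi$ with $[E:F]=N$ and describes the matching supercuspidal, up to a rectifying character twist, as one compactly induced from data on $E^{\times}$; chasing the conductor--discriminant relation between $\depth(\chi)$ over $E$, the break of $\phi$ over $F$, and the simple-stratum invariants of $\pi$ then yields $\depth(\pi)=\depth(\phi)$.

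The general, possibly wildly ramified, supercuspidal case is where I expect the real difficulty. One route is to combine the compatibility of the local Langlands correspondence for $\GL_N$ with automorphic induction and cyclic base change --- properties included in the characterization of the correspondence --- with the behaviour of depth under a tame base change $F'/F$, namely that $\depth$ scales by the ramification index $e(F'/F)$ and is unchanged by an unramified extension, uniformly on both sides; choosing $F'/F$ so that $\phi|_{W_{F'}}$ becomes induced from characters reduces the claim to the essentially tame case over $F'$. A second route bypasses explicit parametrizations: the $\varepsilon$- and $L$-factor characterization forces the Artin conductors of $\pi$ and $\phi$ to agree, for fixed $N$ the Swan conductor of an irreducible $W_F$-representation determines its break, and the Bushnell--Kutzko theory of types provides the parallel statement relating the conductor of a supercuspidal to its stratum level; equating these pins down the equality of depths. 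In either approach, the genuinely delicate point is keeping all normalization constants --- differents, ramification indices, and the $\pm1$ shifts built into the definitions of depth and of conductor --- consistent throughout.
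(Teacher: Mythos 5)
The paper does not actually prove this statement: it is background, quoted directly from Yu \cite[2.3.6]{MR2508720} and Aubert--Baum--Plymen--Solleveld \cite[Proposition 4.5]{MR3579297}. So the comparison is with the argument in those references. Your reduction to the supercuspidal case is sound (it is the Moy--Prasad fact that the depth of an irreducible representation equals the depth of its cuspidal support, together with the observation that the $\SL_2(\C)$-factor and unramified twists do not affect the ramification filtration), and your \emph{second} route for supercuspidals is in substance the proof in the cited references: the $\varepsilon$-factor characterization of the correspondence forces $a(\pi)=a(\phi)$; for an irreducible $N$-dimensional $\phi$ the break decomposition of $\phi|_{P_F}$ is canonical, hence $W_F$-stable, so $\phi$ has a single break and $a(\phi)=N\bigl(1+\depth(\phi)\bigr)$ when $\phi$ is ramified; and on the other side the Bushnell--Kutzko/Bushnell--Fr\"ohlich conductor formula gives $a(\pi)=N\bigl(1+\depth(\pi)\bigr)$ for a supercuspidal $\pi$ (with the harmless exception of unramified characters when $N=1$, handled by class field theory). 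Committing to this conductor argument yields a complete proof, and makes the detour through simple types and the essentially tame correspondence unnecessary.

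The genuine gap is in your \emph{first} route for wildly ramified supercuspidals. When $p\mid N$ an irreducible parameter can be primitive, i.e.\ not induced from a character of $W_E$ for any proper $E/F$, and restricting to a tamely ramified base change never turns a primitive parameter into a monomial (or essentially tame) one; the base changes one would need are wildly ramified, and for those the depth does not simply scale by the ramification index, so the claimed reduction "choose $F'/F$ tame so that $\phi|_{W_{F'}}$ is induced from characters" fails as stated. Since you offered that only as an alternative, the proposal survives, but you should discard that reduction, state the two depth--conductor formulas precisely (including the $W_F$-stability argument for the single break), and note that this is exactly the content of the cited Proposition 4.5 of Aubert--Baum--Plymen--Solleveld.
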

Note that, when $N=1$, this is nothing but the well-known property of the local class field theory about the correspondence between the higher unit groups $\{1+\mathfrak{p}_{F}^{n}\}_{n\in\Z_{>0}}$ and the ramification filtration $\{I_{F}^{\mathrm{ab},n}\}_{n\in\Z_{>0}}$.

Therefore it is a natural attempt to investigate the relationship between the depth of representations and that of $L$-parameters under the local Langlands correspondence for other classical groups.
In \cite{Oi:2018a}, by extending a method of Ganapathy and Varma (\cite{MR3709003}) based on harmonic analysis on $p$-adic reductive groups, we proved the following depth preserving property for quasi-split unitary groups:

\begin{thm}[{\cite[Theorem 5.6]{Oi:2018a}}]\label{thm:intro}
Let $\G$ be a quasi-split unitary group in $N$ variables over $F$.
We assume that the residual characteristic $p$ of $F$ is greater than $N+1$.
Then, for every irreducible smooth representation $\pi$ of $\G(F)$ and its corresponding $L$-parameter $\phi$ of $\G$, we have
\[
\depth(\pi)=\depth(\phi).
\]
\end{thm}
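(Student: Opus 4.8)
The plan is to reduce Theorem~\ref{thm:intro} to the already-known case of general linear groups (\cite[2.3.6]{MR2508720} and \cite[Proposition~4.5]{MR3579297}) by means of Mok's characterization of the local Langlands correspondence for quasi-split unitary groups through twisted endoscopy --- i.e.\ stable base change --- to the general linear group over the quadratic extension $E/F$ defining $\G$. Write $\theta$ for the relevant automorphism and put $\phi^{\GL}:=\phi|_{W_E\times\SL_2(\C)}$ for the base change of $\phi$; this is the $L$-parameter of a $\theta$-stable representation $\Pi$ of $\GL_N(E)$, and the map $\pi\mapsto\phi$ for $\G$ is by construction pinned down by the demand that $\pi$ lift to $\Pi$ via a twisted character identity of the shape
\begin{equation*}
\tr\bigl(\theta\cdot\Pi(f^{\GL})\bigr)=\sum_{\pi'\in\Pi_{\phi}}\langle\,\cdot\,,\pi'\rangle\,\tr\pi'(f),
\end{equation*}
valid whenever $f$ on $\G(F)$ and $f^{\GL}$ on $\GL_N(E)$ have matching (twisted) orbital integrals. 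Since $\Pi$ has parameter $\phi^{\GL}$, the $\GL_N$ theorem gives $\depth(\Pi)=\depth(\phi^{\GL})$ for free, so it suffices to establish the two ``local'' identities
\begin{equation*}
\depth(\phi^{\GL})=e\cdot\depth(\phi)\qquad\text{and}\qquad\depth(\Pi)=e\cdot\depth(\pi),
\end{equation*}
where $e\in\{1,2\}$ is the ramification index of $E/F$; chaining these with the $\GL_N$ theorem yields $\depth(\pi)=\depth(\phi)$.

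The first identity is elementary Galois theory. The hypothesis $p>N+1$ forces $p\neq2$, so $E/F$ is unramified or tamely ramified; in either case $W_F$ and $W_E$ have the same wild inertia subgroup, and, Herbrand's $\psi$-function for a tame extension being $u\mapsto eu$, the upper-numbering ramification filtration of $W_E$ is obtained from that of $W_F$ by the affine reindexing $v\mapsto ev$ on the wild part. Tracking the largest jump of $\phi|_{I_F}$ through this reindexing gives $\depth(\phi^{\GL})=e\cdot\depth(\phi)$. The same factor $e$ measures the discrepancy between Moy--Prasad depth normalized over $F$ and over $E$ (the valuation of $E$ restricting to $e$ times that of $F$), so the two displayed identities are really one statement, read on the Galois side and on the automorphic side.

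The substance is the second identity, $\depth(\Pi)=e\cdot\depth(\pi)$, and here the plan is to adapt the harmonic-analysis argument of Ganapathy and Varma (\cite{MR3709003}) from split classical groups to the twisted unitary setting. The mechanism is that the twisted transfer $f\mapsto f^{\GL}$, assembled from matching of twisted orbital integrals and transfer factors, can be arranged to respect Moy--Prasad filtrations: if $f$ is bi-invariant under the depth-$r$ subgroup $\G(F)_{x,r}$ attached to a point $x$ of the Bruhat--Tits building of $\G$ over $F$, then $f^{\GL}$ may be chosen bi-invariant under the depth-$er$ subgroup of $\GL_N(E)$ attached to the image of $x$ in the building over $E$ --- using the compatibility of the two buildings and, throughout, the tameness afforded by $p>N+1$ to control transfer factors, topological Jordan decompositions and the relevant orbital integrals. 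Feeding such $f$ into the character identity, and using that Hecke operators of depth $\le r$ detect precisely the (twisted) characters of representations of depth $\le r$, while ruling out cancellation among the packet members by linear independence of twisted characters, one extracts $\depth(\Pi)\le e\cdot\depth(\pi)$; the reverse inequality follows symmetrically from surjectivity of matching. In practice one can route all of this through Ganapathy's extension of the Deligne--Kazhdan close-field formalism, transporting the identity to an equal-characteristic local field where congruence truncations are transparent and the relevant $\GL_N$ facts are in place, and then transporting back.

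I expect the third step to be the real obstacle, and the one calling for genuine new work beyond the split case of \cite{MR3709003}: one must make the \emph{twisted} transfer manifestly compatible with the filtrations, correctly bookkeep the ramification factor $e$, and settle the depth-zero case --- where the infimum defining the depth is attained at $0$ and parahoric rather than pro-unipotent subgroups enter --- by a separate argument. The hypothesis $p>N+1$ is exactly what powers the tameness that all of these manipulations consume.
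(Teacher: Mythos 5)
Your plan coincides with how this theorem is actually established: the paper does not prove Theorem \ref{thm:intro} internally but imports it from \cite{Oi:2018a}, whose argument is exactly the route you describe --- Mok's characterization of the correspondence by the twisted (base-change) endoscopic character identity with $\GL_N(E)$, the known depth preservation for $\GL_N$, and a Ganapathy--Varma style harmonic-analysis comparison of depths across that identity, with the ramification index $e$ entering through the tame Herbrand function on the Galois side and the $E$- versus $F$-normalization of Moy--Prasad filtrations on the group side. The step you flag as the real obstacle (making the twisted transfer compatible with the filtrations, bookkeeping $e$, and the depth-zero case) is precisely the content of \cite{Oi:2018a}, so apart from sketch-level imprecisions --- e.g.\ non-cancellation in the packet sum should come from positivity of traces of Moy--Prasad idempotents rather than from linear independence of twisted characters, and the Deligne--Kazhdan close-fields detour is unnecessary over a $p$-adic field --- your approach is essentially the same as the paper's.
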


Our aim in this paper is to extend this result to non-quasi-split unitary groups.
Namely our main result is the following:
\begin{thm}[Theorem \ref{thm:main}]
Let $\G$ be a non-quasi-split unitary group in $N$ variables over $F$.
We assume that the residual characteristic $p$ of $F$ is greater than $N+1$.
Then, for every irreducible smooth representation $\pi$ of $\G(F)$ and its corresponding $L$-parameter $\phi$ of $\G$, we have
\[
\depth(\pi)=\depth(\phi).
\]
\end{thm}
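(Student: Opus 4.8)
The strategy is to reduce the non-quasi-split case to the quasi-split case of Theorem~\ref{thm:intro} by means of the local theta correspondence. First observe that a non-quasi-split unitary group in $N$ variables exists only when $N$ is even; write $\G=U(V)$ for the non-quasi-split Hermitian space $V$ of dimension $N$. Fix a nontrivial additive character $\psi$ of $F$ and, among the Witt towers of skew-Hermitian spaces over $E/F$, consider the two towers whose members have \emph{odd} dimension; every member of such a tower is a quasi-split (odd) unitary group. By Kudla's results on the existence of first occurrence together with the conservation relation of Sun--Zhu, the first occurrence of $\pi$ in at least one of these two odd towers is at a skew-Hermitian space $W$ with $\dim W<N$ (here parity is used: the first occurrence is odd while $N$ is even). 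Let $\sigma$ be the associated theta lift of $\pi$ to $\G'=U(W)$; it is irreducible by Howe duality, which is available since $p\neq 2$, and $\pi$ and $\sigma$ correspond under the local theta correspondence for the dual pair $\bigl(U(V),U(W)\bigr)$ with the chosen data.

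Granting this, the proof runs along the chain
\[
\depth(\pi)\overset{(1)}{=}\depth(\sigma)\overset{(2)}{=}\depth(\phi_{\sigma})\overset{(3)}{=}\depth(\phi_{\pi}),
\]
where $\phi_{\pi}$ and $\phi_{\sigma}$ denote the $L$-parameters of $\pi$ and $\sigma$. Equality $(1)$ is the depth preserving property of the local theta correspondence proved by Pan; one must check that its hypotheses on the residual characteristic are implied by $p>N+1$ and — the subtle point, addressed below — that it is being applied with a normalization of the Weil representation compatible with what follows. Equality $(2)$ is Theorem~\ref{thm:intro} applied to the quasi-split group $\G'=U(W)$; since $\dim W<N$ we have $p>N+1>\dim W+1$, so its hypothesis holds. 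Equality $(3)$ uses the description of the local theta correspondence in terms of the local Langlands correspondence due to Gan--Ichino: passing from $\phi_{\pi}$ to $\phi_{\sigma}$ amounts to twisting by the splitting characters $\chi_{V},\chi_{W}$ of $E^{\times}$ and removing a summand that is a sum of unramified twists of such characters. Since $E/F$ is tamely ramified ($p$ being odd) the characters $\chi_{V},\chi_{W}$ may be taken tamely ramified, hence of depth $0$; consequently twisting by them and adding or removing depth-$0$ summands does not change the depth of the $L$-parameter, by the elementary behaviour of $\depth$ under direct sums and tame twists. Chaining $(1)$--$(3)$ gives the theorem.

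The main obstacle is the compatibility needed to compose $(1)$ with $(3)$. Pan's depth preserving property is proved relative to the splitting of the metaplectic cover of a unitary dual pair that Pan himself constructs, whereas Gan--Ichino's computation of the $L$-parameter of a theta lift is carried out relative to Kudla's splitting. These two splittings of the metaplectic cover of $U(V)\times U(W)$ differ by a character of $U(V)(F)\times U(W)(F)$, and to legitimately use both statements in the same chain one must identify this character explicitly and show that the induced twist does not alter depths — equivalently, that it is tamely ramified under the hypothesis $p>N+1$. Carrying out this comparison of the Kudla and Pan splittings, and propagating the resulting twist through both Pan's and Gan--Ichino's formulas, is the technical heart of the argument; the remaining steps are bookkeeping with the conservation relation and with the elementary estimates for the depth of an $L$-parameter under $\oplus$ and tame twisting.
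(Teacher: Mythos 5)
Your overall strategy---transferring the problem to an odd (hence quasi-split) unitary group via theta lifting, combining Pan's depth preservation with Gan--Ichino's parameter formula, and controlling the discrepancy between Kudla's and Pan's splittings---is the right one, but the specific reduction you propose fails at its first step. You claim that, by the conservation relation together with a parity argument, $\pi$ first occurs in at least one of the two odd Witt towers at a space $W$ with $\dim W<N$. Parity only rules out first occurrence at $N$ itself; the conservation relation $m^{+}(\pi)+m^{-}(\pi)=2N+2$ is perfectly compatible with $m^{+}(\pi)=m^{-}(\pi)=N+1$, and this is exactly what happens whenever the parameter $\phi$ of $\pi$ does not contain $\chi$: by Theorem \ref{thm:GI}\,(1) the lift of $\pi$ to dimension $N+1$ is nonzero for \emph{every} pure inner form $\beta$, i.e.\ in both odd towers, and conservation then forces both first occurrences to equal $N+1$. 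So for such $\pi$ (the generic case) there is no odd $W$ with $\dim W<N$ admitting a nonzero lift, and your chain cannot start. Moreover, even in the cases where an early occurrence does exist, your equality $(3)$ invokes Gan--Ichino, whose description of the parameter of the lift (Theorem \ref{thm:GI}) is available only in the almost equal rank situation $\dim W=\dim V\pm1$; for $\dim W\le N-3$ no such formula is cited, so the comparison of $\depth(\phi_{\sigma})$ with $\depth(\phi_{\pi})$ is unjustified there.

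The way out is to lift \emph{up} rather than down: take $\dim W=N+1$, so that $\U(W)$ is automatically quasi-split, Theorem \ref{thm:GI} guarantees a nonzero irreducible lift to $\G'_{\beta}(F)$ for some pure inner form $\beta$ and identifies its parameter as $(\phi\otimes\chi'^{-1}\chi)\oplus\chi$, and the hypothesis $p>N+1$ still suffices for the quasi-split input in $N+1$ variables because $p$ is an odd prime, so $p>N+1$ already forces $p>N+2$ (Remark \ref{rem:prime}). Two further points you pass over are then needed: the lift may land on the nontrivial pure inner form of the odd group, so one must know that the two (isomorphic) pure inner forms carry the same Vogan packets (Lemma \ref{lem:odd}, Corollary \ref{cor:odd}); and one should first reduce to tempered $\phi$ via Langlands quotients, since the twisting statement for packets and the endoscopic characterizations are tempered statements. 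Finally, you correctly isolate the Kudla-versus-Pan splitting comparison as the technical heart, but you leave it as a to-do; what is actually needed is that the discrepancy is a character of the form $\xi_{E^{1}}\circ\det$ whose associated characters $\chi\,\xi_{E^{\times}}^{-1}$ and $\chi'\,(\xi'_{E^{\times}})^{-1}$ have depth zero (Propositions \ref{prop:compare} and \ref{prop:depth0}), fed through Proposition \ref{prop:twist} to compute the parameter of the twisted lift.
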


To show this, we utilize the \textit{local theta correspondence}.
Let $V$ (resp.\ $V'$) be an $\epsilon$-hermitian (resp.\ $\epsilon'$-hermitian) space over $F$ such that $\epsilon\epsilon'=-1$, and $\U(V)$ (resp.\ $\U(V')$) the corresponding unitary group over $F$.
Then the local theta correspondence gives a correspondence between representations of $\U(V)$ and those of $\U(V')$.
For an irreducible smooth representation $\pi$ of $\U(V)$, we denote the corresponding representation of $\U(V')$ by $\theta(\pi)$.
Here note that $\theta(\pi)$ is possibly zero and that, if $\theta(\pi)$ is not zero, then it is irreducible and smooth.
The key point in our proof is that the local theta correspondence has the following two properties:
\begin{description}
\item[Pan's depth preserving theorem]
The local theta correspondence preserves the depth of representations (\cite{MR1909608}).
Namely, if $\theta(\pi)$ is not zero, then we have
\[
\depth\bigl(\theta(\pi)\bigr)=\depth(\pi).
\]
\item[Gan--Ichino's description of the $L$-parameter of the local theta lift]
If $\theta(\pi)$ is not zero, then the $L$-parameter $\theta(\phi)$ of $\theta(\pi)$ can be described by using the $L$-parameter of $\pi$ (\cite[Appendix C]{MR3166215}).
\end{description}
By combining these properties with the observation that \textit{every unitary group in odd variables is quasi-split}, we can reduce the problem to the quasi-split case (Theorem \ref{thm:intro}).
More precisely, for a given non-quasi-split unitary group $\G$, we assume that $\G$ is realized by an $\epsilon$-hermitian space $V$ over $F$ (necessarily $V$ is in even variables).
We put $2n$ to be the dimension of this space $V$ and let $V'$ be a $(2n+1)$-dimensional $(-\epsilon)$-hermitian space over $F$.
Then the corresponding unitary group $\U(V')$ is quasi-split.
Thus, for an irreducible smooth representation $\pi$ of $\G=\U(V)$, we can compare its depth with the depth of the $L$-parameter $\phi$ of $\pi$ in the following way:
\begin{enumerate}
\item
By Pan's theorem, the depth of $\pi$ is equal to the depth of its theta lift $\theta(\pi)$.
\item
Since $\U(V')$ is quasi-split, by Theorem \ref{thm:intro}, the depth of $\theta(\pi)$ is equal to the depth of its $L$-parameter $\theta(\phi)$.
\item
By Gan--Ichino's description, we can compare the depth of $\theta(\phi)$ with that of $\phi$.
\end{enumerate}
\[
\xymatrix{
\text{$\pi$: representation of $\U(V)$}\quad\ar@{<~>}[rr]^-{\text{LLC for $\U(V)$}}\ar@{~>}[d]_-{\theta}&&\quad\text{$\phi$: $L$-parameter of $\U(V)$}\ar@{~>}[d]_-{\theta}\\
\text{$\theta(\pi)$: representation of $\U(V')$}\quad\ar@{<~>}[rr]^-{\text{LLC for $\U(V')$}}&&\quad\text{$\theta(\phi)$: $L$-parameter of $\U(V')$}.
}
\]

However, in carrying out this strategy, we have to take care of the difference between the normalization of the local theta correspondence used in Pan's result and that in Gan--Ichino's result.
Recall that the local theta correspondence is obtained by considering the Weil representation of the metaplectic group $\Mp(W)$ for the symplectic space $W:=V\otimes V'$.
More precisely, the metaplectic group $\Mp(W)$ is a covering group of the symplectic group $\Sp(W)$ and contains covering groups $\widetilde{\U(V)}$ of $\U(V)$ and $\widetilde{\U(V')}$ of $\U(V)$.
Then, by restricting the Weil representation of $\Mp(W)$ to $\widetilde{\U(V)}\times\widetilde{\U(V')}$, we get a correspondence between representations of $\widetilde{\U(V)}$ and those of $\widetilde{\U(V')}$.
\[
\xymatrix{
\widetilde{\U(V)}\times\widetilde{\U(V')} \ar[r]\ar[d]& \Mp(W)\ar[d]\\
\U(V)\times\U(V') \ar[r]& \Sp(W).
}
\]
To make this to be a correspondence between representations of $\U(V)$ and those of $\U(V')$, we have to choose splittings of the coverings $\widetilde{\U(V)}\rightarrow\U(V)$ and $\widetilde{\U(V')}\rightarrow\U(V')$.
The important point here is that such splittings are \textit{not} canonical.
Namely, there are several ways to construct them.
The depth preserving result of Pan is based on Pan's splittings constructed in \cite{MR1847153} by using the generalized lattice model of the Weil representation.
On the other hand, the result of Gan--Ichino is based on Kudla's splitting constructed in \cite{MR1286835} by using the Schr\"odinger model of the Weil representation.
Therefore, in order to combine these two results, we have to compute the difference between these two kinds of splittings.

In this paper, we first recall Pan's construction of his splittings for the metaplectic covers of unitary groups (Section \ref{sec:pre}) and compute the difference between Pan's splittings and Kudla's splittings (Section \ref{sec:compare}).
Then, according to the strategy explained as above, we show the depth preserving property of the local Langlands correspondence for non-quasi-split unitary groups (Section \ref{sec:depth}).

\medbreak
\noindent{\bfseries Acknowledgment.}\quad
The author expresses his gratitude to his advisor Yoichi Mieda for his warm encouragement and a lot of helpful comments on a draft version of this paper.
This work was carried out with the support from the Program for Leading Graduate Schools, MEXT, Japan.
This work was also supported by JSPS Research Fellowship for Young Scientists and KAKENHI Grant Number 17J05451.

\setcounter{tocdepth}{2}
\tableofcontents

\medbreak
\noindent{\bfseries Notation.}\quad
Let $p$ be a prime number.
In this paper, we always assume that $p$ is not equal to $2$.
For a $p$-adic field $F$, we denote its ring of integers, maximal ideal, residue field, and valuation by $\mathcal{O}_{F}$, $\mathfrak{p}_{F}$, $\mathbf{f}_{F}$, and $\ord_{F}(-)$, respectively.

We fix a quadratic extension $E/F$ of $p$-adic fields and a uniformizer $\varpi_{F}$ of $F^{\times}$.
We denote the Galois conjugation of $E/F$ by $\tau$, and the norm map $\Nr_{E/F}$ shortly by $\Nr$.
We write $\varepsilon_{E/F}$ for the quadratic character of $F^{\times}$ corresponding to the extension $E/F$.

\section{Metaplectic covers of classical groups and their splittings}\label{sec:pre}

In this section, we recall the notion of metaplectic covers of classical groups, their splittings, and the local theta correspondence.

\subsection{Metaplectic covers of symplectic groups and the Weil representations}\label{subsec:meta}

Let $(W,\langle-,-\rangle)$ be a symplectic space over $F$, that is a finite-dimensional vector space $W$ over $F$ with a nondegenerate symplectic form $\langle-,-\rangle$ on $W$.
Then we have the corresponding symplectic group 
\[
\Sp(W):=\{g\in\GL(W)\mid \langle gw,gw'\rangle=\langle w,w'\rangle \text{ for every $w,w'\in W$}\}
\]
and the \textit{Heisenberg group} $\mathbb{H}(W)=W\times F$ with the multiplication given by
\[
(w_{1},t_{1})\cdot(w_{2},t_{2})
:=
\Bigl(w_{1}+w_{2},t_{1}+t_{2}+\frac{1}{2}\langle w_{1},w_{2}\rangle\Bigr).
\]
Then the center of the group $\mathbb{H}(W)$ is given by $\{0\}\times F$.
By Stone--von Neumann's theorem, for every nontrivial additive character $\psi$ of $F$, we have a unique (up to isomorphism) irreducible smooth representation $(\rho_{\psi}, \mathcal{S}_{\psi})$ of $\mathbb{H}(W)$ with central character $\psi$.

We define the twist $(\rho_{\psi}^{g},\mathcal{S}_{\psi})$ of $(\rho_{\psi},\mathcal{S}_{\psi})$ via $g\in\Sp(W)$ by
\[
\rho_{\psi}^{g}(w,t)(v):=\rho_{\psi}(gw,t)(v) \quad\text{for $(w,t)\in\mathbb{H}(W)$ and $v\in \mathcal{S}_{\psi}$}.
\]
Then we define the \textit{metaplectic cover} of $\Sp(W)$ to be the group
\[
\Mp(W):=\{(g,M)\in \Sp(W)\times\GL(\mathcal{S}_{\psi}) \mid M\colon \rho_{\psi}\cong\rho_{\psi}^{g}\}.
\]
Note that, by Stone--von Neumann's theorem, for every $g\in \Sp(W)$, there always exists $M\in\GL(\mathcal{S}_{\psi})$ satisfying $M\colon\rho_{\psi}\cong\rho_{\psi}^{g}$.
Moreover, by Schur's lemma, such an $M$ is unique up to a scalar multiple.
In other words, we have an exact sequence
\[
1
\rightarrow
\C^{\times}
\rightarrow
\Mp(W)
\rightarrow
\Sp(W)
\rightarrow
1,
\]
where the second map is given by $z\mapsto(1, z\cdot\id_{\mathcal{S}_{\psi}})$ and the third map is given by $(g,M)\mapsto g$, and a Cartesian diagram
\[
\xymatrix{
\Mp(W) \ar[r]^-{\omega_{\psi}}\ar[d]& \GL(\mathcal{S}_{\psi})\ar[d]\\
\Sp(W)\ar[r]& \GL(\mathcal{S}_{\psi})/\C^{\times}.
}
\]
We call the representation $\omega_{\psi}; (g,M)\mapsto M$ the \textit{Weil representation} of $\Mp(W)$.

In the rest of this paper, we fix a nontrivial additive character $\psi$ of $F$.

\subsection{Local theta correspondence for metaplectic covers}\label{subsec:theta}
We consider an $\epsilon$-hermitian space $(V,h)$ over $E$ and an $\epsilon'$-hermitian space $(V',h')$ over $E$, where $\epsilon$ and $\epsilon'$ are elements of $\{\pm1\}$ satisfying $\epsilon\epsilon'=-1$.
We denote the corresponding unitary groups by $\U(V)$ and $\U(V')$.
Namely, we put
\[
\U(V) := \{g\in\GL(V) \mid h(gv_{1},gv_{2})=h(v_{1},v_{2}) \text{ for every $v_{1},v_{2}\in V$}\}
\]
(we define $\U(V')$ in the same way).
We can regard the space $W:=V\otimes_{E}V'$ as a symplectic space over $F$ with the symplectic form
\[
\langle v_{1}\otimes v'_{1}, v_{2}\otimes v'_{2}\rangle
:=
\frac{1}{2} \mathrm{Tr}_{E/F}\bigl(h(v_{1},v_{2})\cdot \tau(h'(v'_{1},v'_{2}))\bigr).
\]
Then the pair $(\U(V),\U(V'))$ is a \textit{reductive dual pair} in the symplectic group $\Sp(W)$ and we have a natural map
\[
\U(V)\times \U(V') \rightarrow \Sp(W).
\]
In particular, we have embeddings $\iota_{V'}\colon\U(V)\rightarrow \Sp(W)$ and $\iota_{V}\colon\U(V')\rightarrow \Sp(W)$.
We define the metaplectic cover $\widetilde{\U(V)}$ to be the pull back of $\iota_{V'}(\U(V))$ via the map $\Mp(W)\rightarrow\Sp(W)$ (we define $\widetilde{\U(V')}$ in a similar way).

Then, by considering the pull back of the Weil representation $\omega_{\psi}$ of $\Mp(W)$ via
\[
\widetilde{\U(V)}\times\widetilde{\U(V')} \rightarrow \widetilde{\U(V)}\cdot\widetilde{\U(V')}\subset\Mp(W),
\]
we get a correspondence between irreducible admissible representations of $\widetilde{\U(V)}$ and those of $\widetilde{\U(V')}$.
More precisely, for every irreducible admissible representation $\widetilde{\pi_{V}}$ of $\widetilde{\U(V)}$, the maximal $\widetilde{\pi_{V}}$-isotypic quotient of $\omega_{\psi}|_{\widetilde{\U(V)}\times\widetilde{\U(V')}}$ is of the form 
\[
\widetilde{\pi_{V}}\boxtimes \Theta_{V,V',\psi}(\widetilde{\pi_{V}}),
\]
where $\Theta_{V,V',\psi}(\widetilde{\pi_{V}})$ is an admissible representation of $\widetilde{\U(V')}$, which is called the \textit{big theta lift} of $\widetilde{\pi_{V}}$ and possibly zero.
If $\Theta_{V,V',\psi}(\widetilde{\pi_{V}})$ is not zero, then $\Theta_{V,V',\psi}(\widetilde{\pi_{V}})$ has a unique irreducible quotient which is denoted by $\theta_{V,V',\psi}(\widetilde{\pi_{V}})$ and called the \textit{small theta lift}.
Moreover, for irreducible admissible representations $\widetilde{\pi_{V,1}}$ and $\widetilde{\pi_{V,2}}$ of $\widetilde{\U(V)}$, $\theta_{V,V',\psi}(\widetilde{\pi_{V,1}})$ is equivalent to $\theta_{V,V',\psi}(\widetilde{\pi_{V,2}})$ if and only if $\widetilde{\pi_{V,1}}$ is equivalent to $\widetilde{\pi_{V,2}}$ (so called the \textit{Howe duality}).
The correspondence between irreducible admissible representations of $\widetilde{\U(V)}$ and those of $\widetilde{\U(V')}$ obtained in this way is called the \textit{local theta correspondence}.
These results are basically based on the works in \cite{MR1041060} and \cite{MR1159105}.
See also, for example, \cite[Section 5]{MR3166215} for a summary of the theory of the local theta correspondence.

\subsection{Splittings of the metaplectic covers}\label{subsec:spl}
Recall that, by Stone--von Neumann's theorem and Schur's lemma, for every $g\in\Sp(W)$, an element $M\in\GL(\mathcal{S}_{\psi})$ satisfying $(g,M)\in\Mp(W)$ exists uniquely up to a scalar multiple.
Now we suppose that we have a special choice of such $M_{g}\in\GL(\mathcal{S}_{\psi})$ for each $g\in\Sp(W)$ satisfying $M_{1}=\id_{\mathcal{S}_{\psi}}$.
Then, as sets, we can identify $\Mp(W)$ with $\Sp(W)\times\C^{\times}$ via
\[
\Sp(W)\times\C^{\times} \cong \Mp(W);\quad (g,z)\mapsto (g, zM_{g}).
\]
Moreover, if we define a $2$-cocycle $c(-,-)\colon \Sp(W)\times\Sp(W)\rightarrow\C^{\times}$ by
\[
c(g_{1},g_{2})
:=
M_{g_{1}} M_{g_{2}}M_{g_{1}g_{2}}^{-1} \in \C^{\times},
\]
then the group structure of $\Mp(W)$ can be described in $\Sp(W)\times\C^{\times}$ as
\[
(g_{1},z_{1})\cdot(g_{2},z_{2})
=
\bigl(g_{1}g_{2}, z_{1}z_{2}c(g_{1},g_{2})\bigr).
\]

We say that a function $\beta_{V'}\colon \U(V)\rightarrow\C^{\times}$ is a \textit{splitting} of the cocycle $c$ if we have
\[
c(g,g')
=
\beta_{V'}(gg') \beta_{V'}(g)^{-1} \beta_{V'}(g')^{-1}
\]
for every $g,g'\in\Sp(W)$.
Note that this condition is equivalent to the condition that the map
\[
\widetilde{\beta}_{V'}\colon \U(V) \rightarrow \widetilde{\U(V)};\quad
g\mapsto \bigl(\iota_{V'}(g), \beta_{V'}(g)M_{g}\bigr)
\]
is a group homomorphism (i.e., a section of the covering $\widetilde{\U(V)}\rightarrow\U(V)$).
Furthermore, if we have such an splitting, then we get a group isomorphism
\[
\U(V)\times\C^{\times}\cong\widetilde{\U(V)};\quad
(g,z)\mapsto \bigl(\iota_{V'}(g), z\beta_{V'}(g)M_{g}\bigr).
\]
We say that a splitting $\beta_{V'}$ is \textit{admissible} if the pullback of every admissible representation of $\widetilde{\U(V)}$ via $\widetilde{\beta}_{V'}$ is an admissible representation of $\U(V)$.

Now we suppose that we have such admissible splittings $\beta_{V'}$ for $\U(V)$ and $\beta_{V}$ for $\U(V')$.
Then we can regard the local theta correspondence as a correspondence between representations of $\U(V)$ and those of $\U(V')$ in the following way.
First, we take an irreducible admissible representation $\pi_{V}$ of $\U(V)$.
We write $W_{\pi_{V}}$ for a vector space where $\pi_{V}$ is realized.
Since we have an isomorphism $\widetilde{\U(V)}\cong\U(V)\times\C^{\times}$ obtained from the splitting $\beta_{V'}$, we can extend $\pi_{V}$ to an irreducible admissible representation $\widetilde{\pi_{V}}$ of $\widetilde{\U(V)}$ on $W_{\pi_{V}}$ so that 
\[
\widetilde{\pi_{V}}(g,z):=\pi_{V}(g)\circ z\cdot\id_{W_{\pi_{V}}}
\]
for $(g,z)\in \U(V)\times\C^{\times}\cong\widetilde{\U(V)}$.
By considering the local theta correspondence, we get the small theta lift $\theta_{V,V',\psi}(\widetilde{\pi_{V}})$ of $\widetilde{\pi_{V}}$, which is an admissible representation of $\widetilde{\U(V')}$.
Now we assume that $\theta_{V,V',\psi}(\widetilde{\pi_{V}})$ is not zero.
Then, by noting that the central part $z\in\C^{\times}$ of $\Mp(W)$ acts on $\mathcal{S}_{\psi}$ via $z\cdot\id_{\mathcal{S}_{\psi}}$, the pullback of $\theta_{V,V',\psi}(\widetilde{\pi_{V}})$ to $\U(V')$ via the fixed splitting $\beta_{V}$ is irreducible.
Thus we get an irreducible admissible representation of $\U(V')$.

We remark that there is another way to make the local theta correspondence to be a correspondence between representations of $\U(V)$ and those of $\U(V')$.
That is, we first consider the homomorphism
\[
\U(V)\times\U(V')
\hookrightarrow
\widetilde{\U(V)}\times\widetilde{\U(V')} \rightarrow \widetilde{\U(V)}\cdot\widetilde{\U(V')}\subset\Mp(W)
\]
obtained from the fixed splittings $\beta_{V'}$ and $\beta_{V}$.
Then we define the local theta correspondence between $\U(V)$ and $\U(V')$ in the same manner as in Section \ref{subsec:theta} by using the pullback of the Weil representation $\omega_{\psi}$ of $\Mp(W)$ to $\U(V)\times\U(V')$.
Later (in Section \ref{subsec:key}), we combine two key results on the local theta correspondence which have been established by Gan--Ichino (\cite{MR3166215}) and Pan (\cite{MR1909608}).
In \cite{MR1909608}, the local theta correspondence for $\U(V)$ and $\U(V')$ constructed in the first way is used.
On the other hand, in \cite{MR3166215}, the correspondence constructed in the second way is used.
However, since the central part $z\in\C^{\times}$ of $\Mp(W)$ acts on $\mathcal{S}_{\psi}$ via $z\cdot\id_{\mathcal{S}_{\psi}}$, these two constructions coincide.


\subsection{Schr\"odinger models vs.\ generalized lattice models}\label{subsec:vs}
As explained in the previous subsection, by taking admissible splittings, we can get the local theta correspondence between $\U(V)$ and $\U(V')$.
This correspondence depends on the admissible splittings and there are several ways to construct such splittings explicitly.
In this paper, we use the following two kinds of splittings:

\begin{description}
\item[Kudla's splitting for the Schr\"odinger model]
The first one is Kudla's splitting constructed by using the Schr\"odinger model of the representation $\rho_{\psi}$.
We take a complete polarization $W=X\oplus Y$ (i.e., $X$ and $Y$ are totally isotropic subspaces).
Then we can realize the representation $\rho_{\psi}$ of $\mathbb{H}(W)$ on the space $\mathcal{S}(Y)$ of Schwartz functions (i.e., locally constant and compactly supported functions) on $Y$.
Moreover, by using this realization, we can construct an isomorphism $\rho_{\psi}\cong\rho_{\psi}^{g}$ for each $g\in\Sp(W)$ explicitly.
We write $M_{g}^{Y}$ for this isomorphism.
Here we do not recall how to realize the representation $\rho_{\psi}$ on the space $\mathcal{S}(Y)$ and how to construct $M_{g}^{Y}$.
See, for example, \cite[Section 2.2]{MR1847153}.
Then, as explained in Section \ref{subsec:spl}, we get a $2$-cocycle with respect to $\{M_{g}^{Y}\}_{g\in\Sp(W)}$ (called \textit{Ranga Rao's $2$-cocycle}).
For this $2$-cocycle, Kudla constructed an admissible splitting $\beta_{V'}^{Y}\colon\U(V)\rightarrow\C^{\times}$ explicitly in \cite{MR1286835}.
Here note that, to define the splitting $\beta_{V'}^{Y}$, we have to take a character $\chi_{V'}$ on $E^{\times}$ satisfying $\chi_{V'}|_{F^{\times}}=\varepsilon_{E/F}^{\dim{V'}}$, and the splitting $\beta_{V'}^{Y}$ depends on the choice of such a character.
Also note that the resulting splitting $\widetilde{\beta}_{V'}$ does not depend on the choice of a polarization $W=X\oplus Y$ (see, for example, \cite[Proposition A.1]{MR1327161}).

\item[Pan's splitting for the generalized lattice model]
The second one is Pan's splitting constructed by using the generalized lattice model of the representation $\rho_{\psi}$.
By taking good lattices $L$ of $V$ and $L'$ of $V'$, we can define a good lattice $B$ of $W$ (see \cite[Section 1.3]{MR1847153} for the definition of good lattices).
Then we can realize the representation $\rho_{\psi}$ of $\mathbb{H}(W)$ on the space $\mathcal{S}(B)$ of Schwartz functions on $B$.
Moreover, by using this realization, we can construct an isomorphism $M_{g}^{B}\colon\rho_{\psi}\cong\rho_{\psi}^{g}$ for each $g\in\Sp(W)$ explicitly (see \cite[Section 2.3]{MR1847153}).
Therefore we get a $2$-cocycle with respect to $\{M_{g}^{B}\}_{g\in\Sp(W)}$.
For this $2$-cocycle, Pan constructed an admissible splitting $\beta_{V'}^{L}\colon\U(V)\rightarrow\C^{\times}$ explicitly in \cite{MR1847153}.
\end{description}

In this paper, we fix a character $\bm{\chi}$ of $E^{\times}$ satisfying $\bm{\chi}|_{F\times}=\varepsilon_{E/F}$ and always take $\chi_{V'}$ (in the definition of Kudla's splitting) to be $\bm{\chi}^{\dim{V'}}$.

Let us consider the difference between the local theta correspondence with respect to Kudla's splittings and that with respect to Pan's splittings.
We first take an isomorphism $\Psi\colon\mathcal{S}(Y)\cong\mathcal{S}(B)$ as irreducible representations of $\mathbb{H}(W)$ (note that this is unique up to a scalar multiple by the irreducibility of $\rho_{\psi}$).
By using $\Psi$, we define a function $\alpha_{V'}\colon\U(V)\rightarrow\C^{\times}$ to be the function satisfying
\[
\alpha_{V'}(g)\cdot \Psi\circ M_{\iota_{V'}(g)}^{B}= M_{\iota_{V'}(g)}^{Y}\circ\Psi
\]
for every $g\in\U(V)$.
Then, since $\beta_{V'}^{Y}$ is an admissible splitting for the Schr\"odinger model, the function $\alpha_{V'}\beta_{V'}^{Y}$ is an admissible splitting for the generalized lattice model.
We define the function $\xi_{V'}\colon\U(V)\rightarrow\C^{\times}$ to be the ratio of this splitting $\alpha_{V'}\beta_{V'}^{Y}$ to Pan's splitting $\beta_{V'}^{L}$:
\[
\xi_{V'}:= \alpha_{V'}\beta_{V'}^{Y}(\beta_{V'}^{L})^{-1}.
\]
Then this is a character of $\U(V)$ and the relationship between the local theta correspondences with respect to Kudla's splitting and Pan's splitting is described as follows:

\begin{prop}\label{prop:compare}
Let $\pi_{V}^{L}$ and $\pi_{V'}^{L'}$ are irreducible admissible representations of $\U(V)$ and $\U(V')$, respectively.
We assume that $\pi_{V}^{L}$ and $\pi_{V'}^{L'}$ are related under the local theta correspondence with respect to Pan's splittings $\beta_{V'}^{L}$ and $\beta_{V}^{L'}$.
Then $\pi_{V}^{L}\otimes\xi_{V'}$ and $\pi_{V'}^{L'}\otimes\xi_{V}$ are related under the local theta correspondence with respect to Kudla's splittings $\beta_{V'}^{Y}$ and $\beta_{V}^{Y}$.
\end{prop}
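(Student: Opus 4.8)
The plan is to place both the Kudla section and the Pan section of the covering $\widetilde{\U(V)}\to\U(V)$ inside a single copy of $\Mp(W)$, to show that the two differ exactly by the central twist by $\xi_{V'}$, and then to invoke the fact that the theta correspondence at the level of the metaplectic covers is insensitive to all of the auxiliary choices (polarization, lattices, normalizing intertwiners). Granting that, the statement is formal.

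\emph{Step 1: comparing the two sections.} Using the fixed $\mathbb{H}(W)$-isomorphism $\Psi\colon\mathcal{S}(Y)\xrightarrow{\sim}\mathcal{S}(B)$, I would transport Kudla's intertwiners $M^{Y}_{\iota_{V'}(g)}$ so that they act on $\mathcal{S}(B)$, the same space as Pan's $M^{B}_{\iota_{V'}(g)}$. The defining relation $\alpha_{V'}(g)\cdot\Psi\circ M^{B}_{\iota_{V'}(g)}=M^{Y}_{\iota_{V'}(g)}\circ\Psi$ gives $\Psi^{-1}\circ M^{Y}_{\iota_{V'}(g)}\circ\Psi=\alpha_{V'}(g)\,M^{B}_{\iota_{V'}(g)}$ for $g\in\U(V)$. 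Hence, in the coordinates on $\Mp(W)$ coming from $\{M^{B}_{g}\}_{g}$, the section attached to Pan's splitting is $g\mapsto\bigl(\iota_{V'}(g),\beta_{V'}^{L}(g)M^{B}_{\iota_{V'}(g)}\bigr)$, while the section attached to Kudla's splitting is
\[
g\longmapsto\bigl(\iota_{V'}(g),\,\beta_{V'}^{Y}(g)\alpha_{V'}(g)M^{B}_{\iota_{V'}(g)}\bigr)
=\bigl(\iota_{V'}(g),\,\xi_{V'}(g)\beta_{V'}^{L}(g)M^{B}_{\iota_{V'}(g)}\bigr).
\]
Equivalently, $\widetilde{\beta}_{V'}^{Y}(g)=\bigl(1,\xi_{V'}(g)\cdot\id_{\mathcal{S}_{\psi}}\bigr)\cdot\widetilde{\beta}_{V'}^{L}(g)$ in $\Mp(W)$ for every $g\in\U(V)$, and symmetrically $\widetilde{\beta}_{V}^{Y}(h)=\bigl(1,\xi_{V}(h)\cdot\id_{\mathcal{S}_{\psi}}\bigr)\cdot\widetilde{\beta}_{V}^{L'}(h)$ for $h\in\U(V')$. (Rescaling $\Psi$ changes neither $\alpha_{V'}$ nor $\xi_{V'}$, so these identities are unambiguous, and they re-exhibit $\xi_{V'}$ as a character of $\U(V)$.)

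\emph{Step 2: passing back to representations and concluding.} Every irreducible admissible representation of $\widetilde{\U(V)}$ occurring in the theta correspondence is genuine, i.e.\ the central $\C^{\times}$ acts through $z\mapsto z\cdot\id$, since $\omega_{\psi}$ is; and for such a representation $\widetilde{\pi}$ and any section $\widetilde{s}$ of $\widetilde{\U(V)}\to\U(V)$, the pullback $\widetilde{\pi}\circ\widetilde{s}$ is an irreducible admissible representation of $\U(V)$ whose unique genuine extension along $\widetilde{s}$ is $\widetilde{\pi}$. Combining this with Step 1: if $\pi_{V}^{L}:=\widetilde{\pi}\circ\widetilde{\beta}_{V'}^{L}$, then $\widetilde{\pi}\circ\widetilde{\beta}_{V'}^{Y}=\pi_{V}^{L}\otimes\xi_{V'}$, and similarly on the $V'$ side. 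Now assume $\pi_{V}^{L}$ and $\pi_{V'}^{L'}$ are related under the theta correspondence for Pan's splittings, and let $\widetilde{\pi_{V}}$ be the genuine representation of $\widetilde{\U(V)}$ with $\widetilde{\pi_{V}}\circ\widetilde{\beta}_{V'}^{L}=\pi_{V}^{L}$; by definition this means $\widetilde{\pi_{V'}}:=\theta_{V,V',\psi}(\widetilde{\pi_{V}})\neq 0$ and $\widetilde{\pi_{V'}}\circ\widetilde{\beta}_{V}^{L'}=\pi_{V'}^{L'}$. The relation $\widetilde{\pi_{V}}\leftrightarrow\widetilde{\pi_{V'}}$ is defined entirely on the metaplectic covers, without reference to a polarization, to lattices, or to the normalizing intertwiners, and the two set-ups of the $\U(V)$--$\U(V')$ correspondence recalled in Section \ref{subsec:spl} agree; hence, pulling back along Kudla's sections, $\pi_{V}^{L}\otimes\xi_{V'}=\widetilde{\pi_{V}}\circ\widetilde{\beta}_{V'}^{Y}$ and $\pi_{V'}^{L'}\otimes\xi_{V}=\widetilde{\pi_{V'}}\circ\widetilde{\beta}_{V}^{Y}$ are related under the theta correspondence for Kudla's splittings, which is the assertion.

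\emph{Where the difficulty lies.} All the real content sits in Step 1, namely in checking carefully that conjugating Kudla's intertwiners by $\Psi$ produces exactly $\alpha_{V'}$ times Pan's intertwiners, so that passing from the splitting $\widetilde{\beta}_{V'}^{L}$ to $\widetilde{\beta}_{V'}^{Y}$ is precisely the central twist by $\xi_{V'}$; once that bookkeeping between the Schr\"odinger and the generalized lattice models is done, the remaining steps use only the genuineness of the representations in play and the model-independence of the theta correspondence on the covers.
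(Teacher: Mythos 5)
Your proposal is correct and follows essentially the same route as the paper: both arguments reduce to the identity $\beta_{V'}^{Y}(g)\alpha_{V'}(g)M^{B}_{\iota_{V'}(g)}=\xi_{V'}(g)\beta_{V'}^{L}(g)M^{B}_{\iota_{V'}(g)}$ (i.e.\ the two sections into $\Mp(W)$ differ by the central twist $\xi_{V'}$), then use genuineness of $\widetilde{\pi_{V}}$, $\widetilde{\pi_{V'}}$ and the fact that the correspondence is defined on the covers to conclude that pulling back along Kudla's sections yields $\pi_{V}^{L}\otimes\xi_{V'}$ and $\pi_{V'}^{L'}\otimes\xi_{V}$.
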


\begin{proof}
We assume that $\pi_{V}^{L}$ is the restriction of an irreducible admissible representation $\widetilde{\pi_{V}}$ of $\widetilde{\U(V)}$ to $\U(V)$ via Pan's splitting $\beta_{V'}^{L}$.
Namely, for every $g\in\U(V)$, we have
\[
\pi_{V}^{L}(g) = \widetilde{\pi_{V}}\bigl(\iota_{V'}(g),\beta_{V'}^{L}(g)M_{\iota_{V'}(g)}^{B}\bigr).
\]
Since we have $\xi_{V'}=\alpha_{V'}\beta_{V'}^{Y}(\beta_{V'}^{L})^{-1}$ and $\alpha_{V'}(g)\cdot M_{\iota_{V'}(g)}^{B}= M_{\iota_{V'}(g)}^{Y}$ (here we omit $\Psi$ from the notation), we have 
\begin{align*}
\pi_{V}^{L}\otimes\xi_{V'}(g)
&=\widetilde{\pi_{V}}\bigl(\iota_{V'}(g),\beta_{V'}^{L}(g)M_{\iota_{V'}(g)}^{B}\xi_{V'}(g)\bigr)\\
&=\widetilde{\pi_{V}}\bigl(\iota_{V'}(g),\beta_{V'}^{Y}(g)M_{\iota_{V'}(g)}^{Y}\bigr).
\end{align*}
In other words, $\pi_{V}^{L}\otimes\xi_{V'}$ is the restriction of $\widetilde{\pi_{V}}$ to $\U(V)$ via Kudla's splitting $\beta_{V'}^{Y}$.
On the other hand, if we assume that $\pi_{V'}^{L'}$ is the restriction of an irreducible admissible representation $\widetilde{\pi_{V'}}$ of $\widetilde{\U(V')}$ to $\U(V')$ via Pan's splitting $\beta_{V}^{L'}$, then $\pi_{V'}^{L'}\otimes\xi_{V}$ is the restriction of $\widetilde{\pi_{V'}}$ to $\U(V')$ via Kudla's splitting $\beta_{V}^{Y}$ by the same argument.
This completes the proof.
\end{proof}


\section{Comparison of Kudla's splitting and Pan's splitting}\label{sec:compare}

Our aim in this section is to compute the ratio $\xi_{V'}$ of Kudla's splitting to Pan's splitting explicitly.

\subsection{Maximal open compact subgroup $\U(V)_{L}$ and its character $\zeta_{V'}$}\label{subsec:parah}
In this subsection, we recall the definitions and basic properties of the maximal open compact subgroup $\U(V)_{L}$ of $\U(V)$ and its character $\zeta_{V'}$, which play important roles in Pan's construction of the splitting $\beta_{V'}^{L}$ for the generalized lattice models of the Weil representations.

For the fixed good lattice $L$ of $V$, we denote the corresponding maximal open compact subgroup of $\U(V)$ by $\U(V)_{L}$:
\[
\U(V)_{L}:=\{g\in\U(V) \mid g\cdot L=L\}.
\]
We denote the image of $\U(V)_{L}$ under the determinant map by $E_{L}$.
When $E$ is unramified over $F$, this group $E_{L}$ is equal to 
\[
E^{1}:=\{t\in E^{\times}\mid \Nr(t)=1\}.
\]
When $E$ is ramified over $F$, the group $E_{L}$ is a subgroup of $E^{1}$ of index two and given by
\[
(E^{1})^{+}
:=\{t\in E^{1}\mid t\equiv1\bmod\mathfrak{p}_{E}\}.
\]
We set $\SU(V)_{L}:=\U(V)_{L}\cap\SL(V)$.


Now we define the character $\zeta_{V'}$ on $\U(V)_{L}$ as follows.
First, we consider the quotient of $\U(V)_{L}$ by its commutator subgroup $[\U(V)_{L},\U(V)_{L}]$.
Then we have the following exact sequence of abelian groups:
\[
1
\rightarrow
\SU(V)_{L}/[\U(V)_{L},\U(V)_{L}]
\rightarrow
\U(V)_{L}/[\U(V)_{L},\U(V)_{L}]
\rightarrow
\U(V)_{L}/\SU(V)_{L}
\rightarrow
1.
\]
If we fix a section of the third homomorphism, then we get an isomorphism
\[
\U(V)_{L}/[\U(V)_{L},\U(V)_{L}]
\cong
\bigl(\SU(V)_{L}/[\U(V)_{L},\U(V)_{L}]\bigr)
\times
\bigl(\U(V)_{L}/\SU(V)_{L}\bigr).
\tag{$\ast$}
\]
We first define a character $\zeta_{V'}$ on $\SU(V)_{L}/[\U(V)_{L},\U(V)_{L}]$ of order $2$ as in the manner of \cite[Section 3.3]{MR1847153}.
Then, by using the above isomorphism $(\ast)$, we extend $\zeta_{V'}$ to $\U(V)_{L}/[\U(V)_{L},\U(V)_{L}]$ trivially.
Finally, by inflating it to $\U(V)_{L}$, we define a character $\zeta_{V'}$ on $\U(V)_{L}$.

Here we note that the definition of $\zeta_{V'}$ depends on the choice of the isomorphism $(\ast)$.
Later (in Section \ref{subsec:sect}), we explain our choice of this isomorphism.
We also note that the restriction of $\zeta_{V'}$ to the pro-$p$ part does not depend on the choice of the isomorphism $(\ast)$.
To be more precise, we identify $\U(V)_{L}/\SU(V)_{L}$ with $E_{L}$ via $\det$:
\[
\det\colon\U(V)_{L}/\SU(V)_{L}\xrightarrow{\cong}E_{L},
\]
and put
\[
E_{L}^{+}:=\{t\in E_{L} \mid t\equiv1\bmod\mathfrak{p}_{E}\}
\]
(note that we have $E_{L}\supsetneq E_{L}^{+}$ only when $E$ is unramified over $F$).
Let $\U(V)_{L}^{+}$ be the inverse image of $E_{L}^{+}$ via the determinant map $\U(V)_{L}\twoheadrightarrow E_{L}$:
\[
\xymatrix{
1 \ar[r]&\SU(V)_{L}/[\U(V)_{L},\U(V)_{L}]\ar[r]&\U(V)_{L}/[\U(V)_{L},\U(V)_{L}]\ar[r]&E_{L}\ar[r]&1\\
1 \ar[r]&\SU(V)_{L}/[\U(V)_{L},\U(V)_{L}]\ar[r]\ar[u]^-{=}&\U(V)_{L}^{+}/[\U(V)_{L},\U(V)_{L}]\ar[r]\ar@{^{(}->}[u]&E_{L}^{+}\ar@{^{(}->}[u]\ar[r]&1.
}
\]
Then, since $E_{L}^{+}$ is a pro-$p$ group and $\SU(V)_{L}/[\U(V)_{L},\U(V)_{L}]$ is a prime-to-$p$ group (see \cite[Section 1.4]{MR1847153} for the details), the exact sequence of abelian groups
\[
1
\rightarrow
\SU(V)_{L}/[\U(V)_{L},\U(V)_{L}]
\rightarrow
\U(V)_{L}^{+}/[\U(V)_{L},\U(V)_{L}]
\rightarrow
E_{L}^{+}
\rightarrow
1
\]
splits canonically.
Thus the extension of $\zeta_{V'}$ from $\SU(V)_{L}/[U(V)_{L}, \U(V)_{L}]$ to $\U(V)_{L}^{+}/[\U(V)_{L},\U(V)_{L}]$ is determined canonically.

\subsection{Reduction to a computation of $\alpha_{V'}\beta_{V'}^{Y}$}\label{subsec:ab}

To compute the difference $\xi_{V'}$ between Kudla's splitting and Pan's splitting, we next recall the definition of Pan's splitting $\beta_{V'}^{Y}$.
To define $\beta_{V'}^{Y}$, we use the character $\zeta_{V'}\colon\U(V)_{L}\rightarrow\C^{\times}$ constructed in the previous subsection.

First, by the definition of the generalized lattice models, $\alpha_{V'}\beta_{V'}^{Y}$ is a character on $\U(V)_{L}$.
On the other hand, by Proposition 3.3 in \cite{MR1847153}, we have
\[
\alpha_{V'}\beta_{V'}^{Y}|_{\SU(V)_{L}}
\equiv
\zeta_{V'}|_{\SU(V)_{L}}.
\]
In other words, the map $\alpha_{V'}\beta_{V'}^{Y}\zeta_{V'}^{-1}$ 
is a character on $\U(V)_{L}$ factoring through the determinant map.
We write $\xi'_{V'}$ for the induced character of $E_{L}=\det(\U(V)_{L})$:
\[
\xymatrix{
\U(V)_{L} \ar[rr]^-{\alpha_{V'}\beta_{V'}^{Y}\zeta_{V'}^{-1}}\ar[rd]_-{\det}&& \C^{\times}\\
&E_{L}\ar[ru]_-{\xi'_{V'}}&
}
\]
We take an extension of $\xi'_{V'}$ to $E^{1}$ and denote it again by $\xi'_{V'}$.
Then Pan's splitting $\beta_{V'}^{L}\colon\U(V)\rightarrow\C^{\times}$ is defined by
\[
\beta_{V'}^{L}
:=
(\xi'_{V'}\circ\det)^{-1}\alpha_{V'}\beta_{V'}^{Y}.
\]
Therefore, by this definition, the ratio $\xi_{V'}$ of Kudla's splitting to Pan's splitting is given by
\[
\xi_{V'}
:=\alpha_{V'}\beta_{V'}^{Y}\cdot(\beta_{V'}^{L})^{-1}
=\xi'_{V'}\circ\det.
\]

Thus our task is to determine $\xi'_{V'}$ explicitly.
However, the extension of $\xi'_{V'}$ from $E_{L}$ to $E^{1}$ was taken arbitrary.
Furthermore, the character $\zeta_{V'}$ on $\U(V)_{L}$ is defined by using the noncanonical isomorphism $(\ast)$ (see Section \ref{subsec:parah}).
Therefore $\xi'_{V'}$ is defined canonically only on $E_{L}^{+}$.
Thus our essential task is to determine $\xi'_{V'}|_{E_{L}^{+}}$.
\[
\xymatrix{
\U(V)_{L}^{+}\ar@{^{(}->}[r]\ar[d]_-{\det}&\U(V)_{L} \ar[rr]^-{\alpha_{V'}\beta_{V'}^{Y}\zeta_{V'}^{-1}}\ar[d]_-{\det}&& \C^{\times}\\
E_{L}^{+}\ar@{^{(}->}[r]&E_{L}\ar[rru]_-{\xi'_{V'}}&&
}
\]
To do this, it suffices to compute $\alpha_{V'}\beta_{V'}^{Y}\zeta_{V'}^{-1}$ at the image of a section of the determinant map $\U(V)_{L}^{+}\rightarrow E_{L}^{+}$.
However, by the definition of $\zeta_{V'}$ and an explanation in the final paragraph in Section \ref{subsec:parah}, $\zeta_{V'}$ is necessarily trivial on the image of such a section.
Thus it is enough to compute $\alpha_{V'}\beta_{V'}^{Y}$ on the image of a section of the determinant map $\U(V)_{L}^{+}\rightarrow E_{L}^{+}$.

\subsection{Section of the determinant map}\label{subsec:sect}
In this subsection, we construct an explicit section of the determinant map $\U(V)_{L}^{+}\rightarrow E_{L}^{+}$.

When $V$ is $1$-dimensional, the unitary group $\U(V)$ is isomorphic to $E^{1}$ canonically.
Thus there is nothing to do.

We next consider the case where $V$ is $2$-dimensional isotropic.
In this case, we consider a matrix representation of the unitary group $\U(V)$ with respect to a basis $\{v_{1},v_{2}\}$ of $V$ satisfying
\begin{itemize}
\item
$h(v_{1},v_{1})=h(v_{2},v_{2})=0$, and
\item
$h(v_{1},v_{2})=1$,
\end{itemize}
where $h$ is the $\epsilon$-hermitian form of the $\epsilon$-hermitian space $V$.
Furthermore, we assume that the fixed good lattice $L$ is of the form
\[
\mathfrak{p}_{E}^{\nu_{1}}v_{1}\oplus\mathfrak{p}_{E}^{\nu_{2}}v_{2}
\]
for some integers $\nu_{1},\nu_{2}\in\Z$.
Here note that it is enough to treat only such a case since every good lattice is of the form $g\cdot L$ for some element $g\in\U(V)$ and a good lattice $L\subset V$ of the above form (see Lemma 1.3 (ii) in \cite{MR1847153}).
To construct a splitting in this case, we start from recalling the structure of the quotient of $1+\mathfrak{p}_{E}$ by $1+\mathfrak{p}_{F}$.
First, since $E$ is quadratic over $F$ and $p$ is not equal to $2$, we may assume that $E=F(\delta)$ and $\mathcal{O}_{E}=\mathcal{O}_{F}[\delta]$, where $\delta$ is a square root of a non-square element $\Delta\in F^{\times}\setminus F^{\times2}$ satisfying 
\[
\ord_{F}(\Delta)=
\begin{cases}
0 & \text{if $E$ is unramified over $F$},\\
1 & \text{if $E$ is ramified over $F$}.
\end{cases}
\]
Then we can define a section of the canonical surjection 
\[
(1+\mathfrak{p}_{E})
\twoheadrightarrow
(1+\mathfrak{p}_{E})/(1+\mathfrak{p}_{F})
\]
as follows:
\begin{description}
\item[The case where $E/F$ is unramified]
In this case, we have
\begin{itemize}
\item
$\mathfrak{p}_{E}=\varpi_{F}(\mathcal{O}_{F}+\delta\mathcal{O}_{F})$, and
\item
$\mathfrak{p}_{F}=\varpi_{F}\mathcal{O}_{F}$.
\end{itemize}
Thus the subgroup  
\[
\{1+\varpi_{F}\delta x \mid x\in \mathcal{O}_{F}\}
\]
of $1+\mathfrak{p}_{E}$ is a set of representatives of $(1+\mathfrak{p}_{E})/(1+\mathfrak{p}_{F})$ and gives a section of the surjection $1+\mathfrak{p}_{E}\twoheadrightarrow(1+\mathfrak{p}_{E})/(1+\mathfrak{p}_{F})$.

\item[The case where $E/F$ is ramified]
In this case, we have
\begin{itemize}
\item
$\mathfrak{p}_{E}=\delta\mathcal{O}_{F}+\delta^{2}\mathcal{O}_{F}$, and
\item
$\mathfrak{p}_{F}=\delta^{2}\mathcal{O}_{F}$.
\end{itemize}
Thus the subgroup
\[
\{1+\delta x \mid x\in \mathcal{O}_{F}\}
\]
of $1+\mathfrak{p}_{E}$ is a set of representatives of $(1+\mathfrak{p}_{E})/(1+\mathfrak{p}_{F})$ and gives a section of the surjection $1+\mathfrak{p}_{E}\twoheadrightarrow(1+\mathfrak{p}_{E})/(1+\mathfrak{p}_{F})$.
\end{description}
Now we construct a section of the determinant map.
By Hilbert's theorem 90, we have an isomorphism
\[
E^{\times}/F^{\times}\xrightarrow{\cong} E^{1};\quad k\mapsto k/\tau(k).
\]
Moreover, the subset $E_{L}^{+}$ of the right-hand side corresponds to the subset $(1+\mathfrak{p}_{E})/(1+\mathfrak{p}_{F})$ of the left-hand side.
Therefore, by using the above representatives of $(1+\mathfrak{p}_{E})/(1+\mathfrak{p}_{F})$, we can define a section
\[
E_{L}^{+}\rightarrow 1+\mathfrak{p}_{E};\quad t\mapsto k_{t} 
\]
of the map $1+\mathfrak{p}_{E}\twoheadrightarrow(1+\mathfrak{p}_{E})/(1+\mathfrak{p}_{F})\cong E_{L}^{+}$.
Namely, $k_{t}$ is the unique element of 
\[
\begin{cases}
\{1+\varpi_{F}\delta x \mid x\in \mathcal{O}_{F}\}
& \text{if $E/F$ is unramified},\\
\{1+\delta x \mid x\in \mathcal{O}_{F}\}& \text{if $E/F$ is ramified}
\end{cases}
\]
satisfying $k_{t}/\tau(k_{t})=t$.
By using this, we define a map from $E_{L}^{+}$ to $\U(V)_{L}^{+}$ by
\[
E_{L}^{+} \rightarrow \U(V)_{L}^{+};\quad t\mapsto
\begin{pmatrix}
k_{t}&0\\
0&\tau(k_{t})^{-1}
\end{pmatrix}.
\]
Then this is a section of the determinant map $\det\colon\U(V)^{+}_{L}\rightarrow E_{L}^{+}$.

Finally, we consider the general case.
In this case, by Lemma 1.3 (iii) in \cite{MR1847153}, $V$ has an $L$-admissible decomposition into an orthogonal sum of $2$-dimensional isotropic $\epsilon$-hermitian spaces and $1$-dimensional anisotropic $\epsilon$-hermitian subspaces (see \cite[Section 1.3]{MR1847153} for the definition of the $L$-admissibility).
We take such a decomposition of $V$ and a decomposition of the good lattice $L$ into good lattices $L_{i}$ of $V_{i}$ determined by the decomposition of $V$:
\[
V=\bigoplus_{i\in I}V_{i}
\supset
L=\bigoplus_{i\in I}L_{i}.
\]
We fix one index $i_{0}\in I$.
Then we can regard the group $\U(V_{i_{0}})_{L_{i_{0}}}$ as a subgroup of $\U(V)_{L}$.
Thus, by using the sections for $1$ or $2$-dimensional cases constructed above, we get a section of the determinant map from $\U(V)_{L}^{+}$ to $E_{L}^{+}$:
\[
E_{L}\rightarrow
\U(V_{i_{0}})_{L_{i_{0}}}^{+}\hookrightarrow
\U(V)_{L}^{+}.
\]


\subsection{Computation of the ratio of two splittings}\label{subsec:comp}

Now we compute the difference between Kudla's splitting and Pan's splitting.
Recall that, in Section \ref{subsec:vs}, we defined the character $\xi_{V'}$ of $\U(V)$ to be the ratio of Kudla's splitting to Pan's splitting.
Then the character $\xi_{V'}$ factors through the determinant character $\det\colon\U(V)\rightarrow E^{1}$ and can be written as $\xi_{V'}=\xi'_{V'}\circ\det$.

On the other hand, we fixed a character $\chi_{V'}$ of $E^{\times}$ to define Kudla's splitting.
By the isomorphism of Hilbert's theorem 90, the chain $E_{L}^{+}\subset E_{L}\subset E^{1}$ can be identified with a chain of $E^{\times}/F^{\times}$ as follows:
\[
\xymatrix{
(1+\mathfrak{p}_{E})/(1+\mathfrak{p}_{F})\ar@{^{(}->}[r]\ar[d]^-{\cong}&\mathcal{O}_{E}^{\times}/\mathcal{O}_{F}^{\times} \ar@{^{(}->}[r]\ar[d]^-{\cong}& E^{\times}/F^{\times}\ar[d]^-{\cong}\\
E_{L}^{+}\ar@{^{(}->}[r]&E_{L}\ar@{^{(}->}[r]&E^{1}.
}
\]
Since $\chi_{V'}$ is a character on $E^{\times}$ whose restriction to $F^{\times}$ is equal to either $\mathbbm{1}$ or $\varepsilon_{E/F}$, the restriction of $\chi_{V'}$ to $1+\mathfrak{p}_{E}$ factors through the quotient $(1+\mathfrak{p}_{E})/(1+\mathfrak{p}_{F})$.
Therefore, we can regard $\chi_{V'}|_{1+\mathfrak{p}_{E}}$ as a character of $E_{L}^{+}$.
We denote it by $\chi^{+}_{V'}$.


\begin{prop}\label{prop:depth0}
We have
\[
\xi'_{V'}|_{E_{L}^{+}}=\chi^{+}_{V'}.
\]
\end{prop}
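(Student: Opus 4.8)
The plan is to reduce the computation to the explicit $2$-dimensional isotropic case via the $L$-admissible decomposition of $V$, and there to compute $\alpha_{V'}\beta_{V'}^{Y}$ directly on the image of the section constructed in Section~\ref{subsec:sect}. By the discussion at the end of Section~\ref{subsec:ab}, it suffices to evaluate $\alpha_{V'}\beta_{V'}^{Y}$ on the elements
\[
g_{t}:=
\begin{pmatrix}
k_{t}&0\\
0&\tau(k_{t})^{-1}
\end{pmatrix}
\in\U(V_{i_{0}})_{L_{i_{0}}}^{+}\subset\U(V)_{L}^{+},
\qquad t\in E_{L}^{+},
\]
since $\zeta_{V'}$ is trivial there, and the claim is that this value equals $\chi_{V'}^{+}(t)=\chi_{V'}(k_{t})$.

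\textbf{Key steps.} First I would unwind the definition of $\alpha_{V'}$: it measures the discrepancy, under the fixed intertwiner $\Psi\colon\mathcal{S}(Y)\cong\mathcal{S}(B)$, between Kudla's operator $M_{\iota_{V'}(g)}^{Y}$ for the Schr\"odinger model and Pan's operator $M_{\iota_{V'}(g)}^{B}$ for the generalized lattice model. The point is that $g_{t}$ acts on $V$ with all its ``interesting'' action concentrated on the $2$-dimensional summand $V_{i_{0}}$, so the symplectic automorphism $\iota_{V'}(g_{t})$ of $W=V\otimes_{E}V'$ lies in a ``small'' subgroup, and one can compute both $M_{\iota_{V'}(g_{t})}^{Y}$ and $M_{\iota_{V'}(g_{t})}^{B}$ together with $\Psi$ rather explicitly; this is essentially the content of Kudla's and Pan's local computations in \cite{MR1286835} and \cite{MR1847153}. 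Second, I would invoke Kudla's explicit formula for $\beta_{V'}^{Y}$ at elements of the Levi/torus type: for an element of $\U(V)$ acting on a polarized $2$-dimensional summand as $\diag(k_{t},\tau(k_{t})^{-1})$, Kudla's splitting $\beta_{V'}^{Y}$ is given by a value of the Weil index together with the character $\chi_{V'}$ evaluated at the ``determinant-like'' entry $k_{t}$ (see \cite[Theorem~3.1]{MR1286835} and the torus computations there). Third, combining these, the Weil-index contributions from $\alpha_{V'}$ and from $\beta_{V'}^{Y}$ should cancel (they come from the same $2$-cocycle comparison, which on such elements is trivial modulo the lattice-model normalization since $k_{t}\equiv1\bmod\mathfrak{p}_{E}$), leaving exactly $\chi_{V'}(k_{t})=\chi_{V'}^{+}(t)$.

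\textbf{Main obstacle.} The hard part will be the explicit cocycle/intertwiner bookkeeping in the second step: tracking the normalizations in passing from the Schr\"odinger model $\mathcal{S}(Y)$ to the generalized lattice model $\mathcal{S}(B)$ and verifying that the Weil-index factors arising in $\alpha_{V'}$ cancel those in Kudla's formula for $\beta_{V'}^{Y}$, so that only the $\chi_{V'}$-factor survives. This requires carefully quoting the relevant formulas from \cite[Section~2]{MR1847153} and \cite{MR1286835} and checking that, on the pro-$p$ piece $\U(V)_{L}^{+}$ where $k_{t}\equiv1\bmod\mathfrak{p}_{E}$, all lattice-model correction terms are trivial. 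A secondary subtlety is ensuring that restricting from $V$ to the chosen summand $V_{i_{0}}$ is compatible with all the normalizations (the good lattice $B$ of $W$ decomposes accordingly, and the intertwiner $\Psi$ respects the tensor decomposition up to scalar), so that the general case genuinely reduces to the $2$-dimensional isotropic case; this should follow from the multiplicativity of $\beta_{V'}^{Y}$, $\alpha_{V'}$, and $\zeta_{V'}$ under orthogonal direct sums, which is part of Kudla's and Pan's constructions.
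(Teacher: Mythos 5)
Your overall strategy coincides with the paper's: use the multiplicativity of $\alpha_{V'}\beta_{V'}^{Y}$ over an $L$-admissible orthogonal decomposition to reduce to a low-dimensional summand, note that $\zeta_{V'}$ is trivial on the image of the section, and then evaluate $\alpha_{V'}\beta_{V'}^{Y}$ at $\mu(t)$. However, as written the proposal has two genuine gaps. First, the reduction is incomplete: the $L$-admissible decomposition of $V$ consists of $2$-dimensional isotropic \emph{and} $1$-dimensional anisotropic summands, and if $V$ is anisotropic there is no $2$-dimensional isotropic summand at all, so the $1$-dimensional case (where $\U(V_{i_{0}})\cong E^{1}$ and $\mu(t)$ is the scalar $t$, not $\diag(k_{t},\tau(k_{t})^{-1})$) cannot be avoided. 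You also never decompose $V'$; the paper reduces $\dim V'$ to $1$ or $2$ as well, and this matters because the parity of $\dim V'$ decides whether $\chi_{V'}|_{F^{\times}}$ is $\varepsilon_{E/F}$ or $\mathbbm{1}$, and the resulting formulas (and the way the extraneous factors cancel) are different in the two cases, and again different according to whether $E/F$ is unramified or ramified.

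Second, and more seriously, the crucial cancellation is asserted rather than proved, and the reason you give for it is not the actual mechanism. The claim that the Weil-index contributions of $\alpha_{V'}$ and $\beta_{V'}^{Y}$ cancel ``since $k_{t}\equiv 1\bmod\mathfrak{p}_{E}$'' is not correct as an argument: in the paper the values are taken from Pan's explicit computations (Lemmas 6.1, 6.2, 6.4, 6.5 and 7.1, 7.2, 7.4, 7.5 of \cite{MR1847153}), e.g.\ for $\dim V=\dim V'=1$ one has $\beta_{V'}^{Y}(\mu(t))=(-1)^{\sigma}\chi_{V'}((t-1)\delta)$ with $\sigma=\ord(\psi_{0})+\ord_{F}(\rho)$ and $\alpha_{V'}(\mu(t))=\gamma_{F}(-\Delta,\psi_{0})\,(\Delta,y\rho)_{F}\,\omega_{0}(-1)^{\ord(\psi_{0})}$, and the product collapses to $\chi_{V'}(k_{t})$ only after rewriting $(t-1)\delta$ in terms of $k_{t}$, using $\chi_{V'}\circ\Nr\equiv 1$, the explicit residue-field formula for the Weil constant, and the identities $\chi_{V'}(\rho\Delta)=(-1)^{\ord_{F}(\rho)}$ (unramified case) resp.\ $\chi_{V'}(\Delta)=\omega_{0}(-1)$ (ramified case). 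None of this follows from $k_{t}\equiv1$, and the auxiliary datum $\rho$ depending on $V,V'$ enters essentially. Moreover, ``carefully quoting'' Pan's formulas is itself delicate: the paper points out that the formula for $\alpha_{V'}$ in Pan's Lemma 6.1 contains an error (a superfluous $2$ in the Hilbert symbol, traceable to the definitions of $I_{1},I_{2}$ in the proof of his Lemma 5.9), so the intertwiner bookkeeping you defer to the literature must in part be redone. In short, the plan has the right shape, but the case-by-case evaluation, which is the entire content of the proposition, is missing.
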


In the rest of this section, we prove this proposition.

We consider a decomposition $V=\bigoplus_{i}V_{i}$ of $V$ into $1$-dimensional subspaces or $2$-dimensional isotropic subspaces as in the previous subsection.
We denote the section of the determinant map from $\U(V)_{L}^{+}$ to $E_{L}^{+}$ constructed in the previous subsection by $\mu$:
\[
\mu\colon 
E_{L}^{+}\rightarrow
\U(V_{i_{0}})_{L_{i_{0}}}^{+}\hookrightarrow
\U(V)_{L}^{+}.
\]
Then, in order to show Proposition \ref{prop:depth0}, it is enough to check that
\[
\xi_{V'}\bigl(\mu(t)\bigr)
=
\chi^{+}_{V'}(t)
\]
for every $t\in E_{L}^{+}$.

We have $\xi_{V'}=\alpha_{V'}\beta_{V'}^{Y}\zeta_{V'}^{-1}$ and $\zeta_{V'}$ is trivial on $\U(V)_{L}^{+}$ as explained in Section \ref{subsec:ab}.
Thus our task is to show
\[
\alpha_{V'}\beta_{V'}^{Y}\bigl(\mu(t)\bigr)
=
\chi^{+}_{V'}(t).
\]

If we take a maximal isotropic subspace $Y$ of $V\otimes_{E}V'$ to be the direct sum of maximal isotropic subspaces $Y_{i}$ of $V_{i}\otimes_{E}V'$, then, for every element $g\in\U(V)$ of the form
\[
g=(g_{i})_{i} \in \prod_{i}\U(V_{i})\subset\U(V),
\]
we have
\[
\alpha_{V'}\beta_{V'}^{Y}(g)
=
\prod_{i}\alpha_{V'}\beta_{V'}^{Y_{i}}(g_{i})
\]
(see \cite[Section 3.6]{MR1847153} for the details).
In particular, by the definition of $\mu$, we have
\[
\alpha_{V'}\beta_{V'}^{Y}\bigl(\mu(t)\bigr)
=
\alpha_{V'}\beta_{V'}^{Y_{i_{0}}}\bigl(\mu(t)\bigr).
\]
Namely, it is enough to consider the case where $V$ is either $1$-dimensional or $2$-dimensional isotropic.

Furthermore, we consider a decomposition of $V'\bigoplus_{j}V'_{j}$ into two-dimensional isotropic or $1$-dimensional subspaces, and take a maximal isotropic subspace $Y$ of $V\otimes_{E}V'$ to be the direct sum of maximal isotropic subspaces $Y_{j}$ of $V\otimes_{E}V'_{j}$.
Then, for every element $g\in\U(V)$, we have 
\[
\alpha_{V'}\beta_{V'}^{Y}(g)
=
\prod_{j}\alpha_{V'_{j}}\beta_{V'_{j}}^{Y_{j}}(g_{j}).
\]

Therefore, by our choice of the characters $\chi_{V'}$ and $\chi_{V_{j'}}$ for each $V'_{j}$ (see Section \ref{subsec:vs}), it is enough to show that 
\[
\alpha_{V'_{j}}\beta_{V'_{j}}^{Y_{j}}\bigl(\mu(t)\bigr)
=
\chi_{V'_{j}}^{+}(t)
\]
for each $j$.

In summary, to prove Proposition \ref{prop:compare} for general $V$ and $V'$, it is enough to show it for the case where $V$ and $V'$ are $1$-dimensional or $2$-dimensional isotropic.
We check it by a case-by-case computation.

\subsubsection{The case where $E/F$ is unramified}
We first consider the case where $E$ is an unramified extension of $F$.

\begin{lem}[The case where $\dim{V}=1$ and $\dim{V'}=1$]\label{lem:ur11}
Let $\chi_{V'}$ be a character of $E^{\times}$ satisfying $\chi_{V'}|_{F^{\times}}=\varepsilon_{E/F}$.
Let $t$ be an element of $E_{L}^{+}$ and $k_{t}$ the element constructed in Section $\ref{subsec:sect}$ satisfying $k_{t}/\tau(k_{t})=t$.
Then we have
\[
\alpha_{V'}\beta_{V'}^{Y}\bigl(\mu(t)\bigr)
=
\chi^{+}_{V'}(t)
=
\chi_{V'}(k_{t}).
\]
\end{lem}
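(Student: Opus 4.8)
The plan is to prove Lemma~\ref{lem:ur11} by an explicit computation in the smallest case, followed by a careful matching of the two normalizations of the Weil representation. Since $\dim_{E}V=\dim_{E}V'=1$, the symplectic space $W=V\otimes_{E}V'$ is $2$-dimensional over $F$, so $\Sp(W)\cong\SL_{2}(F)$, and the canonical identification $\U(V)\cong E^{1}$ realizes $\iota_{V'}(\U(V))$ as the anisotropic (norm-one) maximal torus of $\SL_{2}(F)$. First I would fix the $F$-basis $\{1,\delta\}$ of $E$, identify $W$ with $F^{2}$ accordingly, and observe that $\mu(t)\in\U(V)_{L}^{+}$ acts on $W$ as multiplication by $t=t_{0}+t_{1}\delta$, i.e.\ as $\left(\begin{smallmatrix}t_{0}&\Delta t_{1}\\ t_{1}&t_{0}\end{smallmatrix}\right)$, which lies in $\SL_{2}(F)$ since $\det=\Nr(t)=1$. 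The second asserted equality $\chi^{+}_{V'}(t)=\chi_{V'}(k_{t})$ is a matter of definition: by construction in Section~\ref{subsec:sect}, $k_{t}\in 1+\mathfrak{p}_{E}$ is a representative of $t$ under the Hilbert~90 isomorphism $E^{\times}/F^{\times}\xrightarrow{\cong}E^{1}$, while $\chi^{+}_{V'}$ is by definition the character of $E_{L}^{+}\cong(1+\mathfrak{p}_{E})/(1+\mathfrak{p}_{F})$ induced by $\chi_{V'}|_{1+\mathfrak{p}_{E}}$. So the content of the lemma is the single identity $\alpha_{V'}\beta_{V'}^{Y}(\mu(t))=\chi_{V'}(k_{t})$; for $t=1$ this is trivial ($\mu(1)=\id$ and both sides are $1$), so we may assume $t_{1}\ne0$.

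Next I would assemble the two explicit formulas and evaluate them at $\mu(t)$. On Kudla's side, I would plug the matrix above into the explicit description of Kudla's splitting $\beta_{V'}^{Y}$ of Ranga Rao's $2$-cocycle for the unitary dual pair (\cite{MR1286835}; cf.\ \cite[Section~2.2]{MR1847153}): since $t_{1}\ne0$ the element $\mu(t)$ lies in the big cell, so $\beta_{V'}^{Y}(\mu(t))$ is an explicit product of a value of the character $\chi_{V'}$ on a Cayley-type element of $E^{\times}$ built from $t$ and a Weil-index factor $\gamma_{\psi}(\cdot)$, which I would record. On Pan's side, I would set up the generalized lattice model concretely: choose the good lattice $B$ of $W$ determined by the good lattices $L\subset V$ and $L'\subset V'$, realize $\rho_{\psi}$ on $\mathcal{S}(B)$ and on $\mathcal{S}(Y)$, fix an $\mathbb{H}(W)$-intertwiner $\Psi\colon\mathcal{S}(Y)\cong\mathcal{S}(B)$ (a Fourier-type transform), and use the explicit formulas for $M_{g}^{Y}$ and $M_{g}^{B}$ from \cite[Sections~2.2--2.3]{MR1847153} to compute $\alpha_{V'}(\mu(t))$ as the scalar with $\alpha_{V'}(\mu(t))\cdot\Psi\circ M_{\iota_{V'}(\mu(t))}^{B}=M_{\iota_{V'}(\mu(t))}^{Y}\circ\Psi$.

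The last step is to multiply the two expressions and carry out the cancellations in the resulting product; this is where the real work lies and what I expect to be the main obstacle. One must reconcile Kudla's and Pan's conventions --- the factor $\tfrac12$ in $\langle-,-\rangle$, the conductor of $\psi$, the normalization of the Weil index, and the precise choices of $B$ and $\Psi$ --- before the cancellations fall into place. That a clean answer survives is guaranteed in advance: $\alpha_{V'}\beta_{V'}^{Y}$ is a genuine character on $\U(V)_{L}$, so its restriction to the pro-$p$ group $\U(V)_{L}^{+}$ is $\mu_{p^{\infty}}$-valued and the $\mu_{8}$-valued Weil indices must disappear; and since $t\equiv 1\bmod\mathfrak{p}_{E}$ and $p$ is odd (so $1+\mathfrak{p}_{F}\subseteq F^{\times2}$, and likewise over $E$), all surviving square-class quantities evaluate trivially, leaving $\chi_{V'}$ of an explicit element of $E^{\times}$. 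The delicate point is then to identify this element as $k_{t}$ modulo $\ker\chi_{V'}$ --- equivalently, to check that the direction of the Hilbert~90 normalization fixed in Section~\ref{subsec:sect} matches the element produced by Kudla's formula, so that one obtains $\chi_{V'}(k_{t})$ and not $\chi_{V'}(\tau(k_{t}))$ or $\chi_{V'}(k_{t})^{-1}$. Combining this with the reduction of the first paragraph yields the lemma.
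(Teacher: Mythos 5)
Your reduction to the single identity $\alpha_{V'}\beta_{V'}^{Y}(\mu(t))=\chi_{V'}(k_{t})$ is fine and matches the paper, and your overall shape (evaluate $\beta_{V'}^{Y}$ and $\alpha_{V'}$ at $\mu(t)$ explicitly, then combine) is the same route the paper takes. The problem is that the proposal stops exactly where the content of the lemma begins: you never actually produce the value of $\beta_{V'}^{Y}(\mu(t))$ from Kudla's formula, nor the intertwiner ratio $\alpha_{V'}(\mu(t))$ between the Schr\"odinger model and the generalized lattice model; both are deferred with the remark that ``this is where the real work lies.'' In the paper these two values are not rederived but quoted from Pan's Lemma 6.1 (in \cite{MR1847153}): $\beta_{V'}^{Y}(\mu(t))=(-1)^{\sigma}\chi_{V'}((t-1)\delta)$ with $\sigma=\ord(\psi_{0})+\ord_{F}(\rho)$, and $\alpha_{V'}(\mu(t))=\gamma_{F}(-\Delta,\psi_{0})(\Delta,y\rho)_{F}\,\omega_{0}(-1)^{\ord(\psi_{0})}$ --- and the paper even has to correct a misprint in Pan's stated formula for $\alpha_{V'}$ (a spurious factor $2$ in the Hilbert symbol), which shows that ``plug the matrix into the explicit descriptions'' is not a step one can wave through. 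After that the proof is a short rewriting: $(t-1)\delta=2\Delta b/\tau(k_{t})$, triviality of $\chi_{V'}$ on norms, the Weil-constant formula, and $\chi_{V'}(\rho\Delta)=(-1)^{\ord_{F}(\rho)}$, giving exactly $\chi_{V'}(k_{t})$.

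Your structural safety net does not close this gap. Knowing that $\alpha_{V'}\beta_{V'}^{Y}$ restricted to the pro-$p$ group is valued in $p$-power roots of unity only helps once you have already extracted the factor $\chi_{V'}(k_{t})$ correctly and shown that what remains lies in $\mu_{8}$ (or $\{\pm1\}$); that extraction is the computation, not a consequence of parity. Concretely, the $t$-dependent signs $\varepsilon_{E/F}(2\Delta b)$ and $\varepsilon_{E/F}(2b\rho)$ coming from $\beta_{V'}^{Y}$ and $\alpha_{V'}$ must cancel against each other, and the residual constant $(-1)^{\ord(\psi_{0})+\sigma}\chi_{V'}(\rho\Delta)$ must be shown to equal $1$ using the specific relation between $\rho$, $\Delta$ and $\varepsilon_{E/F}$; none of this follows from the observation that $1+\mathfrak{p}_{F}\subseteq F^{\times2}$. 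Worse, the ambiguity you yourself flag --- whether the formula yields $\chi_{V'}(k_{t})$ or $\chi_{V'}(\tau(k_{t}))=\chi_{V'}(k_{t})^{-1}$ --- cannot be resolved by any parity or valuedness argument, since the two candidates differ by $\chi_{V'}(k_{t})^{2}$, itself a $p$-power root of unity; it is settled only by the explicit form of Kudla's cocycle splitting together with the normalization of the section $t\mapsto k_{t}$. So as written the argument is a plan with the decisive computation (Pan's Lemma 6.1, or an independent rederivation of it) missing, and the claimed cancellations are asserted rather than proved.
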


\begin{proof}
We write $k_{t}=1+\delta b$, where $b$ is an element of $\mathfrak{p}_{F}$.
Here, since it is enough to consider the case where $t\neq1$, we may assume that $b\neq0$.
Similarly, we write $t=x+\delta y$ by using elements $x,y\in\mathcal{O}_{F}$.
Note that we have $x\equiv1\bmod\mathfrak{p}_{F}$ and $y$ is given by $\frac{2b}{\Nr(k_{t})}$ (in particular, $y\neq0$).

We put $\sigma:=\ord(\psi_{0})+\ord_{F}(\rho)$.
Here $\rho\in F^{\times}$ is an element determined by $V$ and $V'$ (see \cite[Section 5.2]{MR1847153}) and $\ord(\psi_{0})$ is the level of the additive character $\psi_{0}$ of $F$ defined by $\psi_{0}(x):=\psi(\frac{x}{2})$.
Then, by Lemma 6.1 in \cite{MR1847153}, we have
\[
\beta_{V'}^{Y}\bigl(\mu(t)\bigr)
=
\begin{cases}
\chi_{V'}((t-1)\delta)& \text{if $\sigma$ is even,}\\
-\chi_{V'}((t-1)\delta)& \text{if $\sigma$ is odd, and}
\end{cases}
\]
\[
\alpha_{V'}\bigl(\mu(t)\bigr)
=
\gamma_{F}(-\Delta,\psi_{0})\cdot(\Delta,y\rho)_{F}\cdot\omega_{0}(-1)^{\ord(\psi_{0})},
\]
where $\gamma_{F}(-,-)$ is the Weil constant (see, for example, Section 1.5 in \cite{MR1847153}), $(-,-)_{F}$ is the Hilbert symbol, and $\omega_{0}$ is the nontrivial quadratic character of $\mathbf{f}_{F}^{\times}$.
Here we remark that, in Lemma 6.1 in \cite{MR1847153} the formula of $\alpha_{V'}$ is incorrect, and that $2$ in the Hilbert symbol is not necessary.
This mistake arises from the definitions of $I_{1}$ and $I_{2}$, which are used in the proof of \cite[Lemma 5.9]{MR1847153}.
We rewrite this result in terms of $k_{t}=1+\delta b$.

First we consider $\beta_{V'}^{Y}(\mu(t))$.
As we assume that $\chi_{V'}|_{F^{\times}}=\varepsilon_{E/F}$, $\chi_{V'}$ is trivial on $\Nr(E^{\times})$.
In particular, we have $\chi_{V'}(k_{t})=\chi_{V'}(\tau(k_{t}))^{-1}$.
Thus, since we have
\[
(t-1)\delta
=\frac{k_{t}-\tau(k_{t})}{\tau(k_{t})}\cdot\delta
=\frac{2\Delta b}{\tau(k_{t})},
\]
we get
\[
\beta_{V'}^{Y}\bigl(\mu(t)\bigr)
=
(-1)^{\sigma} \chi_{V'}(2\Delta b) \chi_{V'}(k_{t}).
\]

Next we consider $\alpha_{V'}(\mu(t))$.
By a formula for the Weil constant (see, e.g., Section 1.5 (6) in \cite{MR1847153}), we have
\begin{align*}
\gamma(-\Delta,\psi_{0})
&=
\bigl(\omega_{0}(-\overline{\Delta})\gamma_{\mathbf{f}_{F}}(\overline{\psi_{0}})\bigr)^{\ord_{F}(-\Delta)}
\omega_{0}(-\overline{\Delta})^{\ord(\psi_{0})}
\omega_{0}(-1)^{\ord_{F}(-\Delta)\ord(\psi_{0})}\\
&=
(-1)^{\ord(\psi_{0})}\omega_{0}(-1)^{\ord(\psi_{0})}.
\end{align*}
Here we put $\overline{\Delta}\in\mathbf{f}_{F}^{\times}$ to be the reduction of $\Delta\varpi_{F}^{-\ord_{F}(\Delta)}$.
By noting that $(\Delta,y\rho)_{F}=\chi_{V'}(y\rho)$ and that $y=\frac{2b}{\Nr(k_{t})}$, we get
\begin{align*}
\gamma_{F}(-\Delta,\psi_{0})(\Delta,y\rho)_{F}\omega_{0}(-1)^{\ord(\psi_{0})}
&=(-1)^{\ord(\psi_{0})}\chi_{V'}\bigl(2b\rho\Nr(k_{t})^{-1}\bigr)\\
&=(-1)^{\ord(\psi_{0})}\chi_{V'}(2b\rho).
\end{align*}
Thus we have
\begin{align*}
\alpha_{V'}\beta_{V'}^{Y}\bigl(\mu(t)\bigr)
&=
(-1)^{\ord(\psi_{0})}\chi_{V'}(2b\rho)
(-1)^{\sigma} \chi_{V'}(2\Delta b) \chi_{V'}(k_{t})\\
&=(-1)^{\ord(\psi_{0})+\sigma}\chi_{V'}(\rho\Delta)\chi_{V'}(k_{t}).
\end{align*}
Finally, by noting that $\chi_{V'}(\rho\Delta)=(-1)^{\ord_{F}(\rho)}$, we get
\[
(-1)^{\ord(\psi_{0})+\sigma}\chi_{V'}(\rho\Delta)\chi_{V'}(k_{t})
=
\chi_{V'}(k_{t}).
\]

\end{proof}

\begin{lem}[The case where $\dim{V}=1$ and $\dim{V'}=2$]\label{lem:ur12}
Let $\chi_{V'}$ be a character of $E^{\times}$ satisfying $\chi_{V'}|_{F^{\times}}=\mathbbm{1}$.
Let $t$ be an element of $E_{L}^{+}$ and $k_{t}$ the element constructed in Section $\ref{subsec:sect}$ satisfying $k_{t}/\tau(k_{t})=t$.
Then we have
\[
\alpha_{V'}\beta_{V'}^{Y}\bigl(\mu(t)\bigr)
=
\chi^{+}_{V'}(t)
=
\chi_{V'}(k_{t}).
\]
\end{lem}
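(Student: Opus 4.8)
The plan is to follow the same strategy as in the proof of Lemma \ref{lem:ur11}, adapted to the case $\dim V' = 2$. Since $\dim V = 1$, the section $\mu$ constructed in Section \ref{subsec:sect} is simply the inclusion $E_{L}^{+} \hookrightarrow E^{1} \cong \U(V)$, so that $\mu(t)$ acts on $V$ by multiplication by $t \in E^{1}$. As recalled in Section \ref{subsec:ab} and used in the proof of Lemma \ref{lem:ur11}, the character $\zeta_{V'}$ is trivial on $\U(V)_{L}^{+}$, so $\xi_{V'}(\mu(t)) = \alpha_{V'}\beta_{V'}^{Y}(\mu(t))$ and it suffices to prove $\alpha_{V'}\beta_{V'}^{Y}(\mu(t)) = \chi_{V'}(k_{t})$ for every $t \in E_{L}^{+}$. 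As in the proof of Lemma \ref{lem:ur11} I would write $k_{t} = 1 + \delta b$ with $b \in \mathfrak{p}_{F}$, assuming $b \ne 0$ (the case $t = 1$ being trivial), and $t = x + \delta y$ with $x, y \in \mathcal{O}_{F}$, $x \equiv 1 \bmod \mathfrak{p}_{F}$ and $y = 2b/\Nr(k_{t}) \ne 0$.

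The next step is to extract from \cite[Section 6]{MR1847153} the analogue, for $(\dim V, \dim V') = (1, 2)$ with $V'$ a split plane, of the explicit formulas for $\beta_{V'}^{Y}(\mu(t))$ and $\alpha_{V'}(\mu(t))$ recalled in the proof of Lemma \ref{lem:ur11}: each of these is, up to a power of $-1$ governed by a level/order parity, a product of a value of $\chi_{V'}$ on an element of $F^{\times}$ built out of $b$, $\Delta$, $\rho$, $\Nr(k_{t})$ and the like, times a value of $\chi_{V'}$ on $k_{t}$ itself. One then multiplies the two formulas. The essential simplification over Lemma \ref{lem:ur11} is that here $\dim V'$ is even, so the standing hypothesis reads $\chi_{V'}|_{F^{\times}} = \mathbbm{1}$: every $\chi_{V'}$-value whose argument lies in $F^{\times}$ therefore drops out, and all that remains is to check that the surviving sign is $+1$ and that the leftover factor is exactly $\chi_{V'}(k_{t})$, which is $\chi^{+}_{V'}(t)$.

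A convenient way to organize the computation, and to see in advance why it comes out this way, is to choose the complete polarization $W = Y \oplus Y^{*}$ of $W = V \otimes_{E} V'$ adapted to a hyperbolic basis $\{v_{1}', v_{2}'\}$ of the split plane $V'$, with $Y = V \otimes_{E} E v_{1}'$. Since $v_{1}'$ is isotropic, $Y$ is totally isotropic; the element $\iota_{V'}(\mu(t))$ preserves both $Y$ and $Y^{*}$, i.e.\ lies in the Levi of the Siegel parabolic $P(Y) \cong \GL_{F}(Y)$, and acts on $Y$ by multiplication by $t \in E^{1}$, whose $F$-determinant is $\Nr(t) = 1$. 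Hence Ranga Rao's $2$-cocycle is trivial on this subgroup, $M^{Y}_{\iota_{V'}(-)}$ is multiplicative there, and Kudla's splitting $\beta_{V'}^{Y}$ restricts to $t \mapsto \chi_{V'}(t) = \chi_{V'}(k_{t})$ up to an explicit Weil-index sign, while the remaining factor $\alpha_{V'}(\mu(t))$ — the discrepancy between the Schr\"odinger model on $\mathcal{S}(Y)$ and the generalized lattice model on $\mathcal{S}(B)$ — becomes tractable because $\mu(t)$ stabilizes the good lattice $B$, so that $M^{B}_{\iota_{V'}(\mu(t))}$ is merely a lattice translation with an explicit constant. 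The main obstacle is the bookkeeping of the Weil constants, Hilbert symbols and parities; moreover, as already noted in the proof of Lemma \ref{lem:ur11}, the formula for $\alpha_{V'}$ in \cite[Lemma 6.1]{MR1847153} contains a misprint, so the formulas needed here should be re-derived rather than quoted verbatim. Once the correct expressions are in hand, the cancellation is forced by $\chi_{V'}|_{F^{\times}} = \mathbbm{1}$ and the identity $\alpha_{V'}\beta_{V'}^{Y}(\mu(t)) = \chi_{V'}(k_{t})$ follows.
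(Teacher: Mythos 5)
Your first route is the paper's route: the proof simply quotes \cite[Lemma 6.2]{MR1847153}, which gives $\beta_{V'}^{Y}(\mu(t))=\chi_{V'}\bigl((t-1)\delta\bigr)$ and $\alpha_{V'}(\mu(t))=1$ for this $(\dim V,\dim V')=(1,2)$ case, rewrites $(t-1)\delta=2\Delta b/\tau(k_{t})$ exactly as in Lemma \ref{lem:ur11}, and concludes $\alpha_{V'}\beta_{V'}^{Y}(\mu(t))=\chi_{V'}(2\Delta b\,k_{t})=\chi_{V'}(k_{t})$ because $2\Delta b\in F^{\times}$ and $\chi_{V'}|_{F^{\times}}=\mathbbm{1}$. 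Note that the misprint you are worried about affects only the $\alpha_{V'}$-formula of \cite[Lemma 6.1]{MR1847153} (the $(1,1)$ case); Lemma 6.2 can be used verbatim, so no re-derivation is actually needed, and up to this point your plan is sound.

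However, the heuristic by which you propose to re-derive the formulas contains a genuine error, and since you explicitly intend to lean on it instead of quoting Pan, it matters. You claim that with the polarization $Y=V\otimes_{E}Ev_{1}'$ Kudla's splitting restricts to $t\mapsto\chi_{V'}(t)=\chi_{V'}(k_{t})$ up to a sign, with $\alpha_{V'}$ contributing only an explicit constant. But $t=k_{t}/\tau(k_{t})=k_{t}^{2}/\Nr(k_{t})$, and since $\chi_{V'}|_{F^{\times}}=\mathbbm{1}$ one gets $\chi_{V'}(t)=\chi_{V'}(k_{t})^{2}=\chi^{+}_{V'}(t)^{2}$, which is \emph{not} $\chi^{+}_{V'}(t)=\chi_{V'}(k_{t})$ in general; the distinction between $\chi_{V'}$ evaluated on $E^{1}$ and the Hilbert-90 transport $\chi^{+}_{V'}$ is precisely the subtlety the lemma is about. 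Consequently, any formula for $\beta_{V'}^{Y}(\mu(t))$ that only sees the Levi action of $\iota_{V'}(\mu(t))$ on $Y$ through determinant-type invariants (its $E$-determinant $t$, or its $F$-determinant $\Nr(t)=1$) can only produce $\chi_{V'}(k_{t})^{2}$ or $1$, never the required $\chi_{V'}(k_{t})$; the missing ``square root'' must come from the Schr\"odinger-versus-lattice-model comparison $\alpha_{V'}$ (or, in Pan's normalization, from the factor $\chi_{V'}\bigl((t-1)\delta\bigr)$ in $\beta_{V'}^{Y}$), which is therefore not merely a constant independent of $t$. So the step ``the leftover factor is exactly $\chi_{V'}(k_{t})$'' would fail as you have organized it; the argument closes only if you take the actual values from \cite[Lemma 6.2]{MR1847153}, as the paper does, or recompute $\alpha_{V'}$ honestly rather than treating it as a harmless constant.
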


\begin{proof}
By Lemma 6.2 in \cite{MR1847153}, we have
\[
\beta_{V'}^{Y}\bigl(\mu(t)\bigr)
=
\chi_{V'}\bigl((t-1)\delta\bigr)
\quad\text{and}\quad
\alpha_{V'}\bigl(\mu(t)\bigr)
=
1.
\]
Thus, by the same computation as in the previous case, we get 
\[
\alpha_{V'}\beta_{V'}^{Y}(g)
=
\chi_{V'}(2\Delta bk_{t}).
\]
Since we assume $\chi_{V'}|_{F^{\times}}\equiv\mathbbm{1}$, we get the assertion.
\end{proof}

\begin{lem}[The case where $\dim{V}=2$ and $\dim{V'}=1$]\label{lem:ur21}
Let $\chi_{V'}$ be a character of $E^{\times}$ satisfying $\chi_{V'}|_{F^{\times}}=\varepsilon_{E/F}$.
Let $t$ be an element of $E_{L}^{+}$ and $k_{t}$ the element constructed in Section $\ref{subsec:sect}$ satisfying $k_{t}/\tau(k_{t})=t$.
Then we have
\[
\alpha_{V'}\beta_{V'}^{Y}\bigl(\mu(t)\bigr)
=
\chi^{+}_{V'}(t)
=
\chi_{V'}(k_{t}).
\]
\end{lem}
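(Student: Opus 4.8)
The proof will follow the template of Lemma \ref{lem:ur11}, the only structural change being that the element $\mu(t)$ now lies in the Siegel Levi of $\U(V)$ instead of in an anisotropic torus. With the basis $\{v_{1},v_{2}\}$ of Section \ref{subsec:sect} (so $h(v_{1},v_{1})=h(v_{2},v_{2})=0$ and $h(v_{1},v_{2})=1$), one checks that $\mathrm{diag}(a,\tau(a)^{-1})$ lies in $\U(V)$ for every $a\in E^{\times}$ and that these elements form the Levi factor $m(E^{\times})$ of the parabolic stabilizing the isotropic line $Ev_{1}$; in particular $\mu(t)=m(k_{t})$ with $k_{t}=1+\delta b$, $b\in\mathfrak{p}_{F}$, as in Section \ref{subsec:sect}.

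First I would invoke the explicit formula of \cite{MR1847153} for Kudla's splitting and for the comparison function on this Levi -- the $\dim V=2$, $\dim V'=1$ analogue of the Lemmas 6.1 and 6.2 used in the previous two cases -- to obtain closed expressions
\[
\beta_{V'}^{Y}\bigl(\mu(t)\bigr)=\bigl(\text{an explicit expression in }\chi_{V'}\text{ and }t\bigr),\qquad
\alpha_{V'}\bigl(\mu(t)\bigr)=\bigl(\text{an explicit constant, possibly a product of Weil constants and Hilbert symbols}\bigr),
\]
incorporating the correction to Pan's $\alpha_{V'}$-formula noted in the proof of Lemma \ref{lem:ur11}. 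As there, I may assume $b\neq0$, write $t=x+\delta y$ with $y=2b/\Nr(k_{t})$, and record $(t-1)\delta=2\Delta b/\tau(k_{t})$ together with $\chi_{V'}(k_{t})=\chi_{V'}(\tau(k_{t}))^{-1}$ and $\chi_{V'}(\Nr(k_{t}))=1$, which hold since $\chi_{V'}|_{F^{\times}}=\varepsilon_{E/F}$ is trivial on $\Nr(E^{\times})$. Rewriting both formulas in terms of $k_{t}$, the $\chi_{V'}$-values of the auxiliary quantities ($2$, $\Delta$, $b$, $\rho$, $\Nr(k_{t})$) together with the parity signs $(-1)^{\ord(\psi_{0})}$, $(-1)^{\sigma}$ and the Weil constant $\gamma_{F}(-\Delta,\psi_{0})$ should then collapse -- via the formula for $\gamma_{F}(-\Delta,\psi_{0})$ recalled in the proof of Lemma \ref{lem:ur11} and the identity $\chi_{V'}(\rho\Delta)=(-1)^{\ord_{F}(\rho)}$ -- to leave exactly $\chi_{V'}(k_{t})=\chi^{+}_{V'}(t)$.

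The principal difficulty is bookkeeping rather than conceptual: one must track the sign and Weil-constant contributions carefully, all the more so because the invariant $\rho\in F^{\times}$ attached to the pair $(V,V')$ in \cite[Section 5.2]{MR1847153} and the shift $\sigma=\ord(\psi_{0})+\ord_{F}(\rho)$ may take values different from those in the $\dim V=1$ case, so the cancellation has to be re-verified in the present situation. Should the formula extracted from \cite{MR1847153} be phrased only after further decomposing the space, I would invoke the multiplicativity relation $\alpha_{V'}\beta_{V'}^{Y}(g)=\prod_{j}\alpha_{V'_{j}}\beta_{V'_{j}}^{Y_{j}}(g_{j})$ to reduce to the indecomposable pieces; but since $\dim V'=1$ is already indecomposable, I expect the direct substitution above to suffice.
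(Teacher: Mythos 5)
Your plan is not wrong in spirit, but as written it has a real gap: the two inputs on which everything hinges, the closed-form values of $\beta_{V'}^{Y}\bigl(\mu(t)\bigr)$ and $\alpha_{V'}\bigl(\mu(t)\bigr)$ in the $\dim V=2$, $\dim V'=1$ case, are left as placeholders, and you explicitly defer the verification that the Weil-constant, Hilbert-symbol and sign contributions cancel (``the cancellation has to be re-verified in the present situation''). Since the entire content of the lemma is precisely that cancellation, a proof that neither states the formulas nor performs the cancellation does not establish the claim; it is a programme, not an argument. Moreover, the analogy with Lemma \ref{lem:ur11} is structurally misleading: there the element $\mu(t)$ sits inside a one-dimensional (anisotropic) unitary group and one really must assemble $\beta_{V'}^{Y}$ and $\alpha_{V'}$ separately (including the correction to Pan's $\alpha_{V'}$-formula), whereas in the present case the relevant statement in \cite{MR1847153} is already packaged in final form.

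Indeed, the paper's proof of this lemma is a single citation: Lemma 6.4 of \cite{MR1847153} asserts exactly the identity $\alpha_{V'}\beta_{V'}^{Y}\bigl(\mu(t)\bigr)=\chi_{V'}(k_{t})$ for the $2$-dimensional isotropic $V$ and $1$-dimensional $V'$, so no further manipulation with $\rho$, $\sigma$, $\gamma_{F}(-\Delta,\psi_{0})$ or the correction noted in Lemma \ref{lem:ur11} is needed. If you insist on re-deriving it along the lines you sketch, you would need to actually write down Kudla's value of $\beta_{V'}^{Y}$ on the Levi element $\mathrm{diag}(k_{t},\tau(k_{t})^{-1})$ and Pan's value of $\alpha_{V'}$ on $\U(V)_{L}$ in this case, and carry the bookkeeping through; until that is done, the proposal should be regarded as incomplete rather than as an alternative proof.
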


\begin{proof}
This is nothing other than Lemma 6.4 in \cite{MR1847153}.
\end{proof}

\begin{lem}[The case where $\dim{V}=2$ and $\dim{V'}=2$]\label{lem:ur22}
Let $\chi_{V'}$ be a character of $E^{\times}$ satisfying $\chi_{V'}|_{F^{\times}}=\mathbbm{1}$.
Let $t$ be an element of $E_{L}^{+}$ and $k_{t}$ the element constructed in Section $\ref{subsec:sect}$ satisfying $k_{t}/\tau(k_{t})=t$.
Then we have
\[
\alpha_{V'}\beta_{V'}^{Y}\bigl(\mu(t)\bigr)
=
\chi^{+}_{V'}(t)
=
\chi_{V'}(k_{t}).
\]
\end{lem}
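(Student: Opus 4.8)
The plan is to follow exactly the same template as in the proofs of Lemmas \ref{lem:ur11}, \ref{lem:ur12}, and \ref{lem:ur21}: reduce the computation of $\alpha_{V'}\beta_{V'}^{Y}(\mu(t))$ to explicit formulas for $\beta_{V'}^{Y}$ and $\alpha_{V'}$ coming from the relevant lemmas in \cite{MR1847153} (this time the $\dim V = \dim V' = 2$ case), rewrite everything in terms of $k_{t} = 1+\delta b$ with $b\in\mathfrak{p}_{F}$, and collect the factors. First I would invoke the appropriate computation in \cite{MR1847153} (the analogue of Lemmas 6.1, 6.2, 6.4 there for the $2$-by-$2$ case) to obtain a formula for $\beta_{V'}^{Y}(\mu(t))$ of the shape $(\pm1)^{?}\chi_{V'}((t-1)\delta)$ and a formula for $\alpha_{V'}(\mu(t))$ in terms of Weil constants, Hilbert symbols, and the quadratic character $\omega_{0}$, exactly as in Lemma \ref{lem:ur11}. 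The key algebraic identity, used repeatedly already, is
\[
(t-1)\delta = \frac{k_{t}-\tau(k_{t})}{\tau(k_{t})}\cdot\delta = \frac{2\Delta b}{\tau(k_{t})},
\]
together with $y = 2b/\Nr(k_{t})$, which lets us convert the arguments of $\chi_{V'}$ into $\chi_{V'}(2\Delta b\, k_{t})$ up to norms.

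The main simplification specific to this case is that here $\chi_{V'}|_{F^{\times}} = \mathbbm{1}$, so $\chi_{V'}$ is trivial on $\Nr(E^{\times})$, on $\varpi_{F}$, on $2$, on $\Delta$, and in particular on $b$ (as $b\in F^{\times}$). Thus every stray factor of the form $\chi_{V'}(2\Delta b\,\rho\,\Nr(k_{t})^{-1})$ collapses to $1$ (for the $\rho$-factor I would note, as in the unramified $1$-dimensional case, that $\chi_{V'}(\rho) = 1$ here since $\rho\in F^{\times}$ and $\chi_{V'}$ is trivial on $F^{\times}$), leaving only $\chi_{V'}(k_{t})$. Likewise the sign contributions $(-1)^{\sigma}$, $(-1)^{\ord(\psi_{0})}$, $\omega_{0}(-1)^{\ord(\psi_{0})}$, and the Weil-constant evaluation $\gamma_{F}(-\Delta,\psi_{0})$ should cancel against each other exactly as in Lemma \ref{lem:ur11}; since $E/F$ is unramified we have $\ord_{F}(\Delta) = 0$, which makes the Weil-constant formula degenerate to a pure sign depending only on $\ord(\psi_{0})$, and that sign is matched by the corresponding factor in $\beta_{V'}^{Y}(\mu(t))$.

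The step I expect to be the real obstacle — as elsewhere in this section — is locating and correctly transcribing the precise formulas for $\beta_{V'}^{Y}$ and $\alpha_{V'}$ in the $2$-by-$2$ unramified case from \cite{MR1847153}, and in particular making sure the sign conventions and any errata (recall the correction to Lemma 6.1 noted in the proof of Lemma \ref{lem:ur11}, where the factor of $2$ in the Hilbert symbol was spurious) are applied consistently. Once the two formulas are in hand, the computation is purely formal: substitute $(t-1)\delta = 2\Delta b/\tau(k_{t})$, use $\chi_{V'}(k_{t}) = \chi_{V'}(\tau(k_{t}))$ (valid since $\chi_{V'}$ is trivial on $F^{\times}\supset\{k_{t}\tau(k_{t})\}$... more precisely $\chi_{V'}(k_{t}/\tau(k_t)) = \chi_{V'}(t)$ and $\chi_{V'}(\Nr(k_t))=1$), discard all $F^{\times}$-valued factors, and verify the residual sign is trivial, yielding
\[
\alpha_{V'}\beta_{V'}^{Y}\bigl(\mu(t)\bigr) = \chi_{V'}(k_{t}) = \chi^{+}_{V'}(t)
\]
as claimed.
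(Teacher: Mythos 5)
Your overall strategy --- reduce everything to Pan's explicit case-by-case computations in \cite{MR1847153} and then massage the answer using $(t-1)\delta = 2\Delta b/\tau(k_{t})$ and the triviality of $\chi_{V'}$ on $F^{\times}$ --- is the right family of ideas, but it is heavier than what the situation requires, and as written it does not actually establish anything: you yourself flag that ``locating and correctly transcribing the precise formulas for $\beta_{V'}^{Y}$ and $\alpha_{V'}$ in the $2$-by-$2$ case'' is the real obstacle, and that is exactly where the entire content of the proof lives. In the paper this lemma is a one-line citation: Pan's Lemma 6.5 (like Lemma 6.4 for the case $\dim V=2$, $\dim V'=1$, and Lemmas 7.4, 7.5 in the ramified case) already gives the identity $\alpha_{V'}\beta_{V'}^{Y}(\mu(t))=\chi_{V'}(k_{t})$ directly, so no Lemma \ref{lem:ur11}-style rewriting is needed; the intermediate shapes you posit (a factor $(\pm1)^{?}\chi_{V'}((t-1)\delta)$ for $\beta_{V'}^{Y}$ and a Weil-constant/Hilbert-symbol expression for $\alpha_{V'}$) are guessed by analogy with the $1$-dimensional computations rather than taken from the reference, and nothing guarantees the $\dim V=2$ formulas in Pan have that form. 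So the proposal is not wrong in direction, but it substitutes a speculative computation for the one step that must be checked; once you open \cite{MR1847153} you find Lemma 6.5 is literally the statement to be proved, and the scaffolding you describe becomes unnecessary.
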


\begin{proof}
This is nothing other than Lemma 6.5 in \cite{MR1847153}.
\end{proof}

By the arguments in the beginning of this subsection and Lemmas \ref{lem:ur11}, \ref{lem:ur12}, \ref{lem:ur21}, and \ref{lem:ur22}, the proof of Proposition \ref{prop:depth0} in the case where $E$ is unramified over $F$ is completed.

\subsubsection{The case where $E/F$ is ramified}
We next consider the case where $E$ is a ramified extension of $F$.

\begin{lem}[The case where $\dim{V}=1$ and $\dim{V'}=1$]\label{lem:ram11}
Let $\chi_{V'}$ be a character of $E^{\times}$ satisfying $\chi_{V'}|_{F^{\times}}=\varepsilon_{E/F}$.
Let $t$ be an element of $E_{L}^{+}$ and $k_{t}$ the element constructed in Section $\ref{subsec:sect}$ satisfying $k_{t}/\tau(k_{t})=t$.
Then we have
\[
\alpha_{V'}\beta_{V'}^{Y}\bigl(\mu(t)\bigr)
=
\chi^{+}_{V'}(t)
=
\chi_{V'}(k_{t}).
\]
\end{lem}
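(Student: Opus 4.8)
The plan is to follow the template of the proof of Lemma \ref{lem:ur11} verbatim in its structural steps, substituting the ramified-case input data. First I would invoke Pan's explicit formula for this case in \cite{MR1847153} (the ramified counterpart of Lemma 6.1 loc.\ cit.) to write down $\beta_{V'}^{Y}(\mu(t))$ and $\alpha_{V'}(\mu(t))$ for the element $\mu(t)=\diag(k_{t},\tau(k_{t})^{-1})$, where now $k_{t}=1+\delta b$ with $b\in\mathcal{O}_{F}$ (the ramified section of Section \ref{subsec:sect} uses the representatives $\{1+\delta x\mid x\in\mathcal{O}_{F}\}$). As in the unramified situation, $\beta_{V'}^{Y}(\mu(t))$ should be $\pm\chi_{V'}((t-1)\delta)$ with sign governed by the parity of $\sigma:=\ord(\psi_{0})+\ord_{F}(\rho)$, and $\alpha_{V'}(\mu(t))$ a product of a Weil constant $\gamma_{F}(-\Delta,\psi_{0})$, a Hilbert symbol $(\Delta,y\rho)_{F}$, and a power of $\omega_{0}(-1)$, with $t=x+\delta y$ and $y=2b/\Nr(k_{t})$. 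The goal is to show that the product equals $\chi_{V'}(k_{t})=\chi^{+}_{V'}(t)$.

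Next I would carry out the same rewriting in terms of $k_{t}$ as in Lemma \ref{lem:ur11}: the identities $(t-1)\delta=2\Delta b/\tau(k_{t})$ and $y=2b/\Nr(k_{t})$ are unchanged, and since $\chi_{V'}|_{F^{\times}}=\varepsilon_{E/F}$ is trivial on $\Nr(E^{\times})$ we have $\chi_{V'}(\tau(k_{t}))=\chi_{V'}(k_{t})^{-1}$ and $\chi_{V'}((2b)^{2})=1$. Using $(\Delta,y\rho)_{F}=\varepsilon_{E/F}(y\rho)=\chi_{V'}(2b\rho)$, the whole product collapses to $\chi_{V'}(k_{t})$ times a $b$-independent scalar built from $(-1)^{\sigma}$, powers of $\omega_{0}(-1)$, $\gamma_{F}(-\Delta,\psi_{0})$, and $\chi_{V'}(\rho\Delta)$; so the lemma reduces to checking that this residual scalar is $1$, exactly as in the last display of the proof of Lemma \ref{lem:ur11}.

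The main obstacle, and the only genuine novelty relative to the unramified case, is the evaluation of $\gamma_{F}(-\Delta,\psi_{0})$ when $\ord_{F}(-\Delta)$ is \emph{odd}: by the Weil-constant formula (Section 1.5(6) in \cite{MR1847153}) this now produces a genuine residue-field Gauss sum $\gamma_{\mathbf{f}_{F}}(\overline{\psi_{0}})$ and a value $\omega_{0}(-\overline{\Delta})$, where $\overline{\Delta}$ is the reduction of $\Delta\varpi_{F}^{-1}$, rather than the clean $(-1)^{\ord(\psi_{0})}\omega_{0}(-1)^{\ord(\psi_{0})}$ of the unramified computation. Simultaneously, because $\varepsilon_{E/F}$ is ramified, $\chi_{V'}(\rho\Delta)$ is no longer simply $(-1)^{\ord_{F}(\rho)}$ but involves $\omega_{0}(-1)$, $\omega_{0}$ at the unit parts of $\rho$ and of $\Delta\varpi_{F}^{-1}$, and the parity of $\ord_{F}(\rho)$; one may also need to split according to the two ramified quadratic extensions $E=F(\sqrt{\varpi_{F}})$ and $E=F(\sqrt{u\varpi_{F}})$. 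I expect the delicate part to be showing that the Gauss-sum and quadratic-character factors coming from $\gamma_{F}(-\Delta,\psi_{0})$ cancel exactly against those hidden in $\chi_{V'}(\rho\Delta)$ together with the sign $(-1)^{\sigma}$ — possibly after absorbing compensating factors that appear in the ramified version of Pan's formula but not in the unramified one — so that only $\chi_{V'}(k_{t})$ survives; the structural steps themselves are identical to those in Lemma \ref{lem:ur11}.
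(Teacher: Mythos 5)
Your plan is essentially the paper's proof: both invoke Pan's ramified-case formula (Lemma 7.1 of \cite{MR1847153}) at $\mu(t)=\begin{pmatrix}k_{t}&0\\0&\tau(k_{t})^{-1}\end{pmatrix}$, rewrite everything in terms of $k_{t}=1+\delta b$ via $(t-1)\delta=2\Delta b/\tau(k_{t})$ and $y=2b/\Nr(k_{t})$, evaluate the Weil constants with $\ord_{F}(\Delta)=1$, and check that the remaining $b$-independent scalar is $1$. The compensating factor you anticipated is indeed there: in the ramified case $\beta_{V'}^{Y}(\mu(t))=\chi_{V'}((t-1)\delta)\,(\Delta,\rho)_{F}\,\gamma_{F}(\Delta,\psi_{0})^{-1}$ (not just a sign $(-1)^{\sigma}$), so the Gauss sum $\gamma_{\mathbf{f}_{F}}(\overline{\psi_{0}})$ produced by $\gamma_{F}(-\Delta,\psi_{0})$ in $\alpha_{V'}$ cancels against $\gamma_{F}(\Delta,\psi_{0})^{-1}$, the $\rho$-factors cancel, and the leftover $\omega_{0}(-1)\chi_{V'}(\Delta)$ equals $1$ since $\chi_{V'}(\Delta)=\omega_{0}(-1)$ for ramified $E/F$.
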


\begin{proof}
We write $k_{t}=1+\delta b$, where $b$ is an element of $\mathcal{O}_{F}$.
Here, since it is enough to consider the case where $t\neq1$, we may assume that $b\neq0$.
Similarly, we write $t=x+\delta y$ by using elements $x,y\in\mathcal{O}_{F}$.
Note that we have $x\equiv1\bmod\mathfrak{p}_{F}$ and $y$ is given by $\frac{2b}{\Nr(k_{t})}$ (in particular, $y\neq0$).

By Lemma 7.1 in \cite{MR1847153}, we have
\[
\beta_{V'}^{Y}\bigl(\mu(t)\bigr)
=
\chi_{V'}\bigl((t-1)\delta\bigr)\cdot(\Delta,\rho)_{F}\cdot\gamma_{F}(\Delta,\psi_{0})^{-1}, \text{ and}
\]
\[
\alpha_{V'}\bigl(\mu(t)\bigr)
=
\gamma_{F}(-\Delta,\psi_{0})\cdot(\Delta,y\rho)_{F}\cdot\omega_{0}(-1)^{\ord(\psi_{0})}.
\]
We rewrite this result in terms of $k_{t}=1+\delta b$.

First we consider $\beta_{V'}^{Y}(\mu(t))$.
By the same computation as in Lemma \ref{lem:ur11}, we get
\[
\chi_{V'}\bigl((t-1)\delta\bigr)
=
\chi_{V'}(2\Delta b) \chi_{V'}(k_{t}).
\]
We have $(\Delta,\rho)_{F}=\chi_{V'}(\rho)$ and, by a formula for the Weil constant (see \cite[Section 1.5 (6)]{{MR1847153}}), 
\begin{align*}
\gamma_{F}(\Delta,\psi_{0})^{-1}
&=
\bigl(\omega_{0}(\overline{\Delta})\gamma_{\mathbf{f}_{F}}(\overline{\psi_{0}})\bigr)^{-\ord_{F}(\Delta)}
\omega_{0}(\overline{\Delta})^{-\ord(\psi_{0})}
\omega_{0}(-1)^{-\ord_{F}(\Delta)\ord(\psi_{0})}\\
&=
\gamma_{\mathbf{f}_{F}}(\overline{\psi_{0}})^{-1}
\omega_{0}(-1)^{-\ord(\psi_{0})}
\omega_{0}(\overline{\Delta})^{-1-\ord(\psi_{0})}.
\end{align*}
Thus we get
\[
\beta_{V'}^{Y}\bigl(\mu(t)\bigr)
=
\chi_{V'}(2\Delta b\rho) \chi_{V'}(k_{t})\gamma_{\mathbf{f}_{F}}(\overline{\psi_{0}})^{-1}
\omega_{0}(-1)^{-\ord(\psi_{0})}
\omega_{0}(\overline{\Delta})^{-1-\ord(\psi_{0})}.
\]

Next we consider $\alpha_{V'}(\mu(t))$.
Again by a formula for the Weil constant, we have
\begin{align*}
\gamma_{F}(-\Delta,\psi_{0})
&=
\bigl(\omega_{0}(-\overline{\Delta})\gamma_{\mathbf{f}_{F}}(\overline{\psi_{0}})\bigr)^{\ord_{F}(-\Delta)}
\omega_{0}(-\overline{\Delta})^{\ord(\psi_{0})}
\omega_{0}(-1)^{\ord_{F}(-\Delta)\ord(\psi_{0})}\\
&=
\omega_{0}(-1)^{\ord(\psi_{0})}\gamma_{\mathbf{f}_{F}}(\overline{\psi_{0}})
\omega_{0}(-\overline{\Delta})^{1+\ord(\psi_{0})}.
\end{align*}
Thus, by noting that $(\Delta,y\rho)_{F}=\chi_{V'}(y\rho)$ and that $y=\frac{2b}{\Nr(k_{t})}$, we get
\[
\alpha_{V'}\bigl(\mu(t)\bigr)
=
\gamma_{\mathbf{f}_{F}}(\overline{\psi_{0}})\chi_{V'}(2b\rho)
\omega_{0}(-\overline{\Delta})^{1+\ord(\psi_{0})}.
\]

Therefore we have
\[
\alpha_{V'}\beta_{V'}^{Y}\bigl(\mu(t)\bigr)
=
\omega_{0}(-1)\chi_{V'}(\Delta) \chi_{V'}(k_{t}).
\]
By noting that $\chi_{V'}|_{F^{\times}}=\varepsilon_{E/F}$ and that $E$ is ramified over $F$, we have $\chi_{V'}(\Delta)=\omega_{0}(-1)$.
Thus we get
\[
\alpha_{V'}\beta_{V'}^{Y}\bigl(\mu(t)\bigr)
=
\chi_{V'}(k_{t}).
\]
\end{proof}

\begin{lem}[The case where $\dim{V}=1$ and $\dim{V'}=2$]\label{lem:ram12}
Let $\chi_{V'}$ be a character of $E^{\times}$ satisfying $\chi_{V'}|_{F^{\times}}=\mathbbm{1}$.
Let $t$ be an element of $E_{L}^{+}$ and $k_{t}$ the element constructed in Section $\ref{subsec:sect}$ satisfying $k_{t}/\tau(k_{t})=t$.
Then we have
\[
\alpha_{V'}\beta_{V'}^{Y}\bigl(\mu(t)\bigr)
=
\chi^{+}_{V'}(t)
=
\chi_{V'}(k_{t}).
\]
\end{lem}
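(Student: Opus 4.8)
The plan is to proceed exactly as in the case $\dim V = 1$, $\dim V' = 1$ treated in Lemma~\ref{lem:ram11}, adapting the input formulas to the present situation $\dim V = 1$, $\dim V' = 2$. First I would invoke the relevant lemma from \cite{MR1847153} (the analogue of Lemma 7.1 there for a two-dimensional $V'$) to write down explicit formulas for $\beta_{V'}^{Y}(\mu(t))$ and for $\alpha_{V'}(\mu(t))$ in terms of the Weil constant $\gamma_{F}(\pm\Delta,\psi_{0})$, the Hilbert symbol $(\Delta,\cdot)_{F}$, the quadratic character $\omega_{0}$ of $\mathbf{f}_{F}^{\times}$, and the element $\rho \in F^{\times}$ attached to the pair $(V,V')$. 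The key simplification, as in Lemma~\ref{lem:ur12}, is that now $\chi_{V'}|_{F^{\times}} = \mathbbm{1}$ rather than $\varepsilon_{E/F}$, so $\chi_{V'}$ is trivial on $F^{\times}$ and in particular on $\Nr(E^{\times})$.

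Next I would rewrite $\chi_{V'}((t-1)\delta)$ in terms of $k_t = 1 + \delta b$, using the identity
\[
(t-1)\delta = \frac{k_t - \tau(k_t)}{\tau(k_t)}\cdot\delta = \frac{2\Delta b}{\tau(k_t)},
\]
exactly as in the proof of Lemma~\ref{lem:ur11}; since $\chi_{V'}(\tau(k_t)) = \chi_{V'}(k_t)^{-1} \chi_{V'}(\Nr(k_t)) = \chi_{V'}(k_t)^{-1}$, this yields $\chi_{V'}((t-1)\delta) = \chi_{V'}(2\Delta b)\,\chi_{V'}(k_t)$. Then I would substitute the standard formula for the Weil constant over a ramified extension (the analogue of the computation in Lemma~\ref{lem:ram11}) to express $\gamma_{F}(-\Delta,\psi_{0})$ and any accompanying $\gamma_{F}(\Delta,\psi_{0})^{\pm1}$ factors in terms of $\gamma_{\mathbf{f}_{F}}(\overline{\psi_{0}})$, $\omega_{0}(-1)$, and $\omega_{0}(\overline{\Delta})$, and plug $y = \frac{2b}{\Nr(k_t)}$ into the Hilbert symbol $(\Delta, y\rho)_{F} = \chi_{V'}(y\rho)$.

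The remaining work is then purely bookkeeping: multiplying $\alpha_{V'}(\mu(t))$ by $\beta_{V'}^{Y}(\mu(t))$, all the $\gamma_{\mathbf{f}_{F}}(\overline{\psi_{0}})$, $\omega_{0}(-1)$, $\omega_{0}(\overline{\Delta})$, and $\chi_{V'}(\text{element of } F^{\times})$ factors should cancel — here the hypothesis $\chi_{V'}|_{F^{\times}} = \mathbbm{1}$ makes factors like $\chi_{V'}(2\Delta b\rho)$ and $\chi_{V'}(\Nr(k_t))$ disappear outright, which is what distinguishes this case from the $\varepsilon_{E/F}$ cases — leaving only $\chi_{V'}(k_t) = \chi_{V'}^{+}(t)$. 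The main obstacle, as in all of these lemmas, is not conceptual but rather the delicate sign and character bookkeeping in the Weil-constant formula for the ramified case; one has to be careful that the parities of $\ord(\psi_{0})$ and $\ord_{F}(\rho)$ enter consistently and that the correction noted after Lemma~\ref{lem:ur11} (the absence of the ``$2$'' in the Hilbert symbol) is applied. Once the formulas from \cite{MR1847153} are quoted correctly, the identity $\alpha_{V'}\beta_{V'}^{Y}(\mu(t)) = \chi_{V'}(k_t)$ follows by direct substitution.
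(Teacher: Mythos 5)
Your proposal follows essentially the same route as the paper: the paper's proof simply invokes Lemma 7.2 of Pan (the ramified analogue of Lemma 7.1 for two-dimensional $V'$, exactly the lemma you identify) and then repeats the computation of Lemma \ref{lem:ur12}, i.e.\ rewriting $(t-1)\delta=2\Delta b/\tau(k_{t})$ and using $\chi_{V'}|_{F^{\times}}=\mathbbm{1}$ to kill all factors coming from $F^{\times}$, which is precisely your plan. Your extra caution about Weil-constant bookkeeping is harmless and consistent with the paper's treatment of the other ramified cases.
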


\begin{proof}
The assertion follows from Lemma 7.2 in \cite{MR1847153} and the same argument as in the proof of Lemma \ref{lem:ur12}.
\end{proof}

\begin{lem}[The case where $\dim{V}=2$ and $\dim{V'}=1$]\label{lem:ram21}
Let $\chi_{V'}$ be a character of $E^{\times}$ satisfying $\chi_{V'}|_{F^{\times}}=\varepsilon_{E/F}$.
Let $t$ be an element of $E_{L}^{+}$ and $k_{t}$ the element constructed in Section $\ref{subsec:sect}$ satisfying $k_{t}/\tau(k_{t})=t$.
Then we have
\[
\alpha_{V'}\beta_{V'}^{Y}\bigl(\mu(t)\bigr)
=
\chi^{+}_{V'}(t)
=
\chi_{V'}(k_{t}).
\]
\end{lem}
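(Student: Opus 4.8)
The plan is to treat Lemma \ref{lem:ram21} as the ramified counterpart of Lemma \ref{lem:ur21}: rather than citing the unramified computation of \cite[Section 6]{MR1847153}, I would invoke its ramified analogue in \cite[Section 7]{MR1847153} (the $\dim V=2$, $\dim V'=1$ lemma there, playing the role that Lemma~6.4 played in the proof of Lemma \ref{lem:ur21}). That lemma should supply explicit formulas for $\beta_{V'}^{Y}(\mu(t))$ and for $\alpha_{V'}(\mu(t))$, each of the shape ``a value of $\chi_{V'}$, times a Hilbert symbol $(\Delta,-)_{F}$, times a power of a Weil constant $\gamma_{F}(\pm\Delta,\psi_{0})$'', exactly as in the formulas recorded in the proof of Lemma \ref{lem:ram11}. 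With those in hand the desired identity $\alpha_{V'}\beta_{V'}^{Y}(\mu(t))=\chi_{V'}(k_{t})$ becomes a purely computational matter.

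The computation itself I would run in lockstep with the proof of Lemma \ref{lem:ram11}. First substitute $k_{t}=1+\delta b$ with $b\in\mathcal{O}_{F}$ (the ramified normalization from Section \ref{subsec:sect}), and rewrite $(t-1)\delta=2\Delta b/\tau(k_{t})$; since $\chi_{V'}|_{F^{\times}}=\varepsilon_{E/F}$ is trivial on $\Nr(E^{\times})$, this turns every factor $\chi_{V'}((t-1)\delta)$ into $\chi_{V'}(2\Delta b)\chi_{V'}(k_{t})$. Next expand each $\gamma_{F}(\pm\Delta,\psi_{0})$ by the formula \cite[Section 1.5 (6)]{MR1847153}, keeping track of the parities of $\ord(\psi_{0})$ and $\ord_{F}(\rho)$; because $\ord_{F}(\Delta)=1$ in the ramified case, the factors $\gamma_{\mathbf{f}_{F}}(\overline{\psi_{0}})$ and the powers of $\omega_{0}(-1)$ and $\omega_{0}(\overline{\Delta})$ produced by $\beta_{V'}^{Y}$ and by $\alpha_{V'}$ do not vanish individually, but their product collapses after using $(\Delta,\rho)_{F}=\chi_{V'}(\rho)$, $(\Delta,y\rho)_{F}=\chi_{V'}(y\rho)$ with $y=2b/\Nr(k_{t})$, and $\chi_{V'}(\rho\Delta)=(-1)^{\ord_{F}(\rho)}$. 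As in Lemma \ref{lem:ram11}, this should leave a residual factor of the form $\omega_{0}(-1)\,\chi_{V'}(\Delta)$ multiplying $\chi_{V'}(k_{t})$, and that factor is $1$ by the identity $\chi_{V'}(\Delta)=\omega_{0}(-1)$, valid since $\chi_{V'}|_{F^{\times}}=\varepsilon_{E/F}$ and $E/F$ is ramified; this yields $\alpha_{V'}\beta_{V'}^{Y}(\mu(t))=\chi_{V'}(k_{t})=\chi^{+}_{V'}(t)$.

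The step I expect to be the main obstacle is not conceptual but a matter of vigilance with the cited formulas: as already flagged in the proof of Lemma \ref{lem:ur11}, the expression for $\alpha_{V'}$ in \cite{MR1847153} carries a spurious factor of $2$ inside a Hilbert symbol, traceable to the definitions of the auxiliary quantities $I_{1}$ and $I_{2}$ used in \cite[Lemma 5.9]{MR1847153}. So before combining anything I must re-derive (or at least re-check against those definitions) the ramified, $\dim V=2$ formula for $\alpha_{V'}(\mu(t))$ and confirm that no analogous extraneous factor survives there. Once the correct formulas are established, the cancellation of Weil constants is the identical bookkeeping as in Lemma \ref{lem:ram11}, and the two-dimensional isotropic structure of $V$ enters only through the diagonal shape $\mu(t)=\diag\bigl(k_{t},\tau(k_{t})^{-1}\bigr)$, which is precisely the element for which the cited lemma of \cite{MR1847153} is stated.
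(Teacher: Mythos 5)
Your approach is essentially the paper's: the paper proves this lemma by directly citing Lemma 7.4 of \cite{MR1847153}, i.e.\ exactly the ramified, $\dim V=2$, $\dim V'=1$ statement of Pan that you propose to invoke. The only difference is that Pan's Lemma 7.4 already records the combined value $\alpha_{V'}\beta_{V'}^{Y}\bigl(\mu(t)\bigr)=\chi_{V'}(k_{t})$ for the element $\mu(t)=\diag\bigl(k_{t},\tau(k_{t})^{-1}\bigr)$, so the separate Weil-constant and Hilbert-symbol bookkeeping you sketch in the style of Lemma \ref{lem:ram11} is not needed (just as Lemma \ref{lem:ur21} was settled by citing Pan's Lemma 6.4 alone).
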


\begin{proof}
This is nothing other than Lemma 7.4 in \cite{MR1847153}.
\end{proof}

\begin{lem}[The case where $\dim{V}=2$ and $\dim{V'}=2$]\label{lem:ram22}
Let $\chi_{V'}$ be a character of $E^{\times}$ satisfying $\chi_{V'}|_{F^{\times}}=\mathbbm{1}$.
Let $t$ be an element of $E_{L}^{+}$ and $k_{t}$ the element constructed in Section $\ref{subsec:sect}$ satisfying $k_{t}/\tau(k_{t})=t$.
Then we have
\[
\alpha_{V'}\beta_{V'}^{Y}\bigl(\mu(t)\bigr)
=
\chi^{+}_{V'}(t)
=
\chi_{V'}(k_{t}).
\]
\end{lem}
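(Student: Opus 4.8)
The plan is to follow exactly the same pattern as in the unramified $\dim V = \dim V' = 2$ case, namely to cite the corresponding computation already carried out by Pan and then convert its output into the desired normalization. Concretely, I would first invoke Lemma 7.5 in \cite{MR1847153}, which gives explicit formulas for $\beta_{V'}^{Y}\bigl(\mu(t)\bigr)$ and $\alpha_{V'}\bigl(\mu(t)\bigr)$ in this case, with all the auxiliary Weil-constant and Hilbert-symbol factors written down. As in the previous lemmas, I would write $k_{t} = 1 + \delta b$ with $b \in \mathcal{O}_{F}$ (assuming $t \neq 1$, hence $b \neq 0$) and $t = x + \delta y$ with $x \equiv 1 \bmod \mathfrak{p}_{F}$ and $y = \frac{2b}{\Nr(k_{t})}$, so that $(t-1)\delta = \frac{2\Delta b}{\tau(k_{t})}$ and therefore $\chi_{V'}\bigl((t-1)\delta\bigr) = \chi_{V'}(2\Delta b)\,\chi_{V'}(k_{t})\,\chi_{V'}\bigl(\tau(k_{t})\bigr)^{-1}\chi_{V'}(k_{t})$; since here $\chi_{V'}|_{F^{\times}} = \mathbbm{1}$ the norm factor $\chi_{V'}(2\Delta b)$ and the factor $\chi_{V'}\bigl(\Nr(k_{t})\bigr)$ appearing through $y$ both drop out.

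Next I would multiply the two formulas from Lemma 7.5 together and simplify the product of Weil constants using the formula for $\gamma_{F}(-,\psi_{0})$ in terms of $\gamma_{\mathbf{f}_{F}}(\overline{\psi_{0}})$, $\omega_{0}$, and $\ord(\psi_{0})$, exactly as in the proofs of Lemmas~\ref{lem:ur11} and \ref{lem:ram11}. The $E/F$-ramified case typically leaves a residual factor of the shape $\omega_{0}(-1)^{?}\,\chi_{V'}(\Delta)^{?}\,\gamma_{\mathbf{f}_{F}}(\overline{\psi_{0}})^{?}$; all such surviving factors must be accounted for, and one uses that $\chi_{V'}(\Delta)$ equals $1$ in this case (because $\chi_{V'}$ is trivial on $F^{\times}$ and $\Delta \in F^{\times}$) together with the Gauss-sum identity $\gamma_{\mathbf{f}_{F}}(\overline{\psi_{0}})^{2} = \omega_{0}(-1)$ to collapse everything to $1$. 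What remains is precisely $\chi_{V'}(k_{t})$, which by definition is $\chi^{+}_{V'}(t)$.

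The main obstacle I expect is purely bookkeeping: in the ramified case $\ord_{F}(\Delta) = 1$, so the Weil-constant formula contributes a genuine $\gamma_{\mathbf{f}_{F}}(\overline{\psi_{0}})$ factor (not just a sign as in the unramified case), and one has to verify carefully that the contribution coming from $\beta_{V'}^{Y}$ and the one coming from $\alpha_{V'}$ combine so that these Gauss-sum and $\omega_{0}(-\overline{\Delta})$ factors cancel in pairs rather than accumulating. This is exactly the delicate cancellation that was handled in Lemma~\ref{lem:ram11}, so I would organize the computation to mirror that proof as closely as possible, isolating the ``residue-field part'' of each formula first and checking its vanishing before dealing with the $p$-adic characters. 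Since Pan's Lemma 7.5 supplies all the needed input and the trivial central character $\chi_{V'}|_{F^{\times}} = \mathbbm{1}$ makes the norm-type factors disappear, no new idea beyond the earlier cases should be required — the statement should follow, as in Lemmas~\ref{lem:ur22} and \ref{lem:ram21}, essentially by citing Lemma 7.5 in \cite{MR1847153} and running the same computation as in Lemma~\ref{lem:ram11}.

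\begin{proof}
The assertion follows from Lemma 7.5 in \cite{MR1847153} and the same argument as in the proof of Lemma \ref{lem:ram11}, noting that here $\chi_{V'}|_{F^{\times}} = \mathbbm{1}$ so that all norm-type factors (in particular $\chi_{V'}(\Delta)$) are trivial.
\end{proof}
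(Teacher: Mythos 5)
Your proposal is correct and follows essentially the same route as the paper: the paper's proof is simply the citation of Lemma 7.5 in \cite{MR1847153}, which in the $\dim V=2$ case already evaluates $\alpha_{V'}\beta_{V'}^{Y}$ on the diagonal element $\mu(t)$ directly in terms of $k_{t}$, so no conversion argument in the style of Lemma \ref{lem:ram11} is actually needed. The additional computation you sketch is therefore superfluous (and guesses at a shape of Pan's formulas closer to the $\dim V=1$ case), but it does no harm and the conclusion is the same.
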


\begin{proof}
This is nothing other than Lemma 7.5 in \cite{MR1847153}.
\end{proof}

By the arguments in the beginning of this subsection and Lemmas \ref{lem:ram11}, \ref{lem:ram12}, \ref{lem:ram21}, and \ref{lem:ram22}, the proof of Proposition \ref{prop:depth0} in the case where $E$ is ramified over $F$ is completed.

\section{Depth preserving property of the local Langlands correspondence for non-quasi-split unitary groups}\label{sec:depth}
In this section, we apply our result on a comparison of splittings of metaplectic covers in the previous section to the depth preserving problem of the local Langlands correspondence for non-quasi-split unitary groups.

\subsection{Inner forms and pure inner forms of unitary groups}\label{subsec:inner}
We first recall a classification of inner forms and pure inner forms of unitary groups briefly.
See, for example, \cite[Section 2]{MR3202556}, \cite[Section 2]{MR3166215}, or \cite[Section 0.3.3]{KMSW} for the details.

Let $\G$ be the quasi-split unitary group with respect to a quadratic extension $E/F$ in $N$ variables.
Then the isomorphism classes of inner forms of $\G$ are classified by the set
\[
H^{1}(F,\G_{\ad}),
\]
and the order of this set is given by
\[
\begin{cases}
1 & \text{if $N$ is odd},\\
2 & \text{if $N$ is even}.
\end{cases}
\]
In other words, when $N$ is odd, then every unitary group with respect to $E/F$ in $N$ variables is quasi-split and isomorphic to $\G$.
When $N$ is even, there are two isomorphism classes of unitary groups with respect to $E/F$ in $N$ variables; one is quasi-split and isomorphic to $\G$, and the other is non-quasi-split.

On the other hand, the set of pure inner forms of $\G$ is given by
\[
H^{1}(F,\G).
\]
The order of this set is always equal to $2$, and we can identify $H^{1}(F,\G)$ with the set of equivalence classes of $\epsilon$-hermitian spaces in $N$ variables, for any $\epsilon$.

The natural map
\[
H^{1}(F,\G)\rightarrow H^{1}(F,\G_{\ad})
\]
is surjective, and the pure inner form corresponding to an $\epsilon$-hermitian space $V$ maps to the inner form $\U(V)$ of $\G$.
For an element $\alpha$ of $H^{1}(F, \G)$, we denote the corresponding inner form of $\G$ by $\G_{\alpha}$.
We put $e(\G_{\alpha})$ to be the Kottwitz sign of $\G_{\alpha}$, which is an invariant defined in \cite{MR697075}.
Note that, in our situation, this is given by
\[
e(\G_{\alpha})
=
\begin{cases}
1 & \text{if $\G_{\alpha}$ is quasi-split},\\
-1 & \text{if $\G_{\alpha}$ is non-quasi-split}.
\end{cases}
\] 

\begin{table}[htb]
  \begin{center}
    \begin{tabular}{|c|c|c|c|} \hline
	 size $N$ & pure inner form & inner form & Kottwitz sign \\ \hline
	 even & $1$ & quasi-split & $1$\\ \cline{2-4}
	  & $\alpha\neq1$ & non-quasi-split & $-1$\\ \cline{1-4}
 	 odd & $1$ or $\beta\neq1$ & quasi-split & $1$\\ \cline{1-4}
    \end{tabular}
  \end{center}
\end{table}

\subsection{Local Langlands correspondence for unitary groups}\label{subsec:LLC}
We next recall the local Langlands correspondence for unitary groups according to the formalism of ``Vogan $L$-packets'' based on the pure inner form.

Let $\G$ be the quasi-split unitary group with respect to $E/F$ in $N$ variables.
We write $\Phi(\G)$ for the set of $\widehat{\G}$-conjugacy classes of $L$-parameters of $\G$.
We put
\[
\Pi(\G):=\bigsqcup_{\alpha\in H^{1}(F,\G)}\Pi(\G_{\alpha}),
\]
where $\Pi(\G_{\alpha})$ is the set of equivalence classes of irreducible smooth representations of $\G_{\alpha}(F)$.

Then the local Langlands correspondence for $\G$, which was established by Mok (\cite{MR3338302}) in the quasi-split case, and announced by Kaletha--Minguez--Shin--White (\cite{KMSW}) in the non-quasi-split case (see also, for example, \cite[Section 12.3]{BP}) is formulated as follows:
\begin{thm}[the local Langlands correspondence for $\G$]\label{thm:LLC}
We have a natural partition of $\Pi(\G)$ into finite sets
\[
\Pi(\G)
=
\bigsqcup_{\phi\in\Phi(\G)}\Pi_{\phi}
\]
characterized by the following properties:
\begin{enumerate}
\item
For every tempered $L$-parameter $\phi$ and every pure inner form $\alpha\in H^{1}(F,\G)$, the finite set $\Pi_{\phi}^{\alpha}:=\Pi_{\phi}\cap\Pi(\G_{\alpha})$ is stable.
Namely, the sum $\Theta_{\phi}^{\alpha}$ of the characters of irreducible smooth representations of $\G_{\alpha}(F)$ belonging to $\Pi_{\phi}^{\alpha}$ is a stable distribution.
\item
For every tempered $L$-parameter $\phi$, the stable distribution $\Theta_{\phi}^{1}$ on $\G_{1}(F)$ satisfies the endoscopic character relation with the twisted character of the endoscopic lift of $\Pi_{\phi}$ to a general linear group.
To be more precise, by regarding $\phi$ as an $L$-parameter of $\mathrm{Res}_{E/F}\GL_{N}$ by the standard base change embedding from the $L$-group of $\G$ to that of $\mathrm{Res}_{E/F}\GL_{N}$, we get the corresponding conjugate self-dual representation $\pi_{\phi}^{\GL_{N}}$ of $\GL_{N}(E)$.
We denote its twisted character with respect to a fixed Whittaker data of $\mathrm{Res}_{E/F}\GL_{N}$ by $\Theta_{\phi,\theta}^{\GL_{N}}$.
Then, for every strongly regular semisimple elements $g\in \GL_{N}(E)$ and $h\in \G_{1}(F)$ such that $h$ is a norm of $g$, we have
\[
\Theta_{\phi,\theta}^{\GL_{N}}(g)
=
\Theta_{\phi}^{1}(h).
\]

\item
For every tempered $L$-parameter $\phi$ and every pure inner form $\alpha\in H^{1}(F,\G)$, the stable distribution $\Theta_{\phi}^{\alpha}$ on $\G_{\alpha}(F)$ is the transfer of the stable distribution $e(\G_{\alpha})\Theta_{\phi}^{1}$ on $\G_{1}(F)$.
More precisely, for every strongly regular semisimple elements $g_{\alpha}\in\G_{\alpha}(F)$ and $g_{1}\in\G_{1}(F)$ which are stably conjugate, we have
\[
\Theta_{\phi}^{\alpha}(g_{\alpha})
=
e(\G_{\alpha})\Theta_{\phi}^{1}(g_{1}).
\]
\item
The partition is compatible with the theory of the Langlands quotient.
\end{enumerate}
\end{thm}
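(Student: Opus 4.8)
The plan is to follow the endoscopic classification, realizing the packets by means of (twisted) base change to $\mathrm{Res}_{E/F}\GL_{N}$, for which the local Langlands correspondence is already available together with its sharp characterization by $\varepsilon$- and $L$-factors (\cite{MR1876802}). First, in the quasi-split case $\G=\G_{1}$, I would invoke the stabilization of the twisted trace formula for the pair $(\mathrm{Res}_{E/F}\GL_{N},\theta)$ in tandem with the ordinary stable trace formula for $\G$ and for its (twisted) elliptic endoscopic groups. Comparing the two spectral expansions term by term produces, for every conjugate self-dual discrete $L$-parameter $\phi$, a stable virtual character on $\G(F)$ whose twisted transfer to $\GL_{N}(E)$ equals the character $\Theta_{\phi,\theta}^{\GL_{N}}$ of $\pi_{\phi}^{\GL_{N}}$; this is exactly the endoscopic character relation of property (2). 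Decomposing this stable virtual character into irreducibles, and proving linear independence of the resulting characters via the local intertwining operators and the structure of the relevant $R$-groups, one obtains the finite set $\Pi_{\phi}^{1}$ and property (1). Compatibility with parabolic induction (property (4)) then follows by reducing to discrete parameters through the Langlands classification, using that the construction is compatible with Jacquet modules and normalized induction.

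Next, for the non-quasi-split inner form $\G_{\alpha}$, I would transport the stable distribution $\Theta_{\phi}^{1}$ along the Langlands--Shelstad--Kottwitz transfer attached to the inner twist $\G_{1}\rightsquigarrow\G_{\alpha}$; keeping track of the normalization of transfer factors for inner forms produces precisely the Kottwitz sign $e(\G_{\alpha})$ appearing in property (3). To see that the transferred stable distribution is again a nonnegative integral combination of irreducible characters, and hence defines an honest packet $\Pi_{\phi}^{\alpha}=\Pi_{\phi}\cap\Pi(\G_{\alpha})$, I would once more appeal to the stabilization of the trace formula, now for $\G_{\alpha}$ itself, together with the endoscopic character identities on each proper Levi subgroup. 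The internal structure of the packet (the parametrization by characters of the component group $\pi_{0}(S_{\phi})$, fixed once a Whittaker datum is chosen on the quasi-split form) then follows from these identities, and uniqueness of the partition is built into the characterization: any two partitions satisfying (1)--(4) assign the same stable distributions and hence, by linear independence of characters, the same packets.

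The main obstacle is the deep harmonic-analytic input on which every step above rests: the stabilization of the (twisted) trace formula, the Langlands--Shelstad transfer of orbital integrals and the fundamental lemma together with its twisted and weighted variants, and the comparison of weighted orbital integrals — all of which rely on the foundational work of Arthur, Ng\^o, Waldspurger, M\oe glin and others. The purely local supplement, namely that the endoscopic character relations uniquely pin down the members of each packet and that the intertwining operators behave as predicted, is itself substantial. For these reasons we do not reproduce the argument here; the quasi-split case is \cite[Theorem 2.5.1]{MR3338302} and the non-quasi-split case is the content of \cite{KMSW} (see also \cite[Section 12.3]{BP}), and we take Theorem \ref{thm:LLC} as our starting point.
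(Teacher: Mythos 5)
Your proposal ultimately defers to the same sources that the paper itself relies on --- Mok (\cite{MR3338302}) for the quasi-split case and Kaletha--Minguez--Shin--White (\cite{KMSW}, see also \cite{BP}) for the non-quasi-split case --- which is exactly how the paper treats this theorem: it is stated as established external input rather than proved. Your outline of the trace-formula comparison and transfer argument is a fair summary of how those references proceed, so the treatment is essentially the same as the paper's.
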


We call each finite set $\Pi_{\phi}$ the $L$-packet for $\phi$.
For an irreducible representation $\pi\in\Pi(\G)$, we often write $\phi_{\pi}$ for its $L$-parameter (that is, the unique $L$-parameter satisfying $\pi\in\Pi_{\phi_{\pi}}$).

\begin{rem}
In \cite{MR3338302}, the endoscopic character relation (Theorem \ref{thm:LLC} (2)) is stated in terms of distribution characters.
By using Weyl's integration formula, we can rewrite it as an equality of characters (i.e., functions on the strongly regular semisimple loci of $\G_{1}(F)$ and $\GL_{N}(E)$) (see, for example, Section 5 in \cite{MR3067291}).
Then the obtained equality of the characters is more complicated than the equality in Theorem \ref{thm:LLC} (2).
However, by noting the following points, we can simplify it and get the equality of the form in Theorem \ref{thm:LLC} (2) (see, for example, \cite[Corollary 4.8]{Oi:2016aa}):
\begin{itemize}
\item
For every strongly regular semisimple (in the sense of twisted conjugacy) element $g\in\GL_{N}(E)$, there exists at most one stable conjugacy classes of norms of $g$ in $\G_{1}(F)$.
\item
For every strongly regular semisimple elements $g\in \GL_{N}(E)$ and $h\in \G_{1}(F)$ such that $h$ is a norm of $g$, their Weyl discriminants coincide.
\item
Since we consider the endoscopic lifting with respect to the standard base change embedding, the Kottwitz--Shelstad transfer factor is trivial.
\end{itemize}
\end{rem}

The following observation is trivial, but plays an important role in the proof of our main theorem.
\begin{lem}\label{lem:odd}
We assume that $N$ is odd.
For the nontrivial pure inner form $\beta\in H^{1}(F,\G)$, we identify $\Pi(\G_{1})$ with $\Pi(\G_{\beta})$ by the isomorphism $\G_{1}\cong\G_{\beta}$ $($determined uniquely up to an inner automorphism$)$.
Then, for every $L$-parameter $\phi$ of $\G$, we have
\[
\Pi_{\phi}^{1}=\Pi_{\phi}^{\beta}.
\]
\end{lem}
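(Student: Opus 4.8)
The plan is to exploit the fact that, when $N$ is odd, the two inner forms $\G_1$ and $\G_\beta$ are isomorphic (indeed, there is only one isomorphism class of unitary group in odd variables), so that the statement $\Pi_\phi^1 = \Pi_\phi^\beta$ is a statement about a single group to which we compare the packet with itself via the fixed identification $\G_1 \cong \G_\beta$. First I would recall that this identification is canonical up to an inner automorphism of $\G_1(F)$, and that inner automorphisms act trivially on the set of equivalence classes of irreducible smooth representations; hence the identification $\Pi(\G_1) \cong \Pi(\G_\beta)$ is itself canonical, and it makes sense to ask whether the two subsets $\Pi_\phi^1$ and $\Pi_\phi^\beta$ of $\Pi(\G_1) = \Pi(\G_\beta)$ coincide.

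Next I would invoke the characterizing properties of the local Langlands correspondence from Theorem \ref{thm:LLC}, reducing first to the tempered case (the general case then follows from property (4), compatibility with the Langlands quotient construction, since Langlands data are built out of tempered parameters on Levi subgroups, to which the same argument applies). For a tempered parameter $\phi$, property (3) says that $\Theta_\phi^\beta$ is the transfer of $e(\G_\beta)\Theta_\phi^1$; but since $N$ is odd, $\G_\beta$ is quasi-split, so $e(\G_\beta) = 1$, and moreover strongly regular semisimple stable conjugacy classes in $\G_\beta(F)$ match those in $\G_1(F)$ under the isomorphism $\G_1 \cong \G_\beta$. Therefore $\Theta_\phi^\beta(g_\beta) = \Theta_\phi^1(g_1)$ for stably conjugate strongly regular semisimple elements; under the identification $\G_1 \cong \G_\beta$ this says precisely that the stable characters $\Theta_\phi^1$ and $\Theta_\phi^\beta$ agree as stable distributions on the common group.

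From the equality of stable characters I would then conclude equality of the underlying packets. A stable distribution of the form $\Theta_\phi^\alpha = \sum_{\pi \in \Pi_\phi^\alpha} \Theta_\pi$ on a $p$-adic group is a nonnegative integer combination (with all coefficients equal to $1$) of the distribution characters of the members of $\Pi_\phi^\alpha$; since distribution characters of pairwise inequivalent irreducible smooth representations are linearly independent, the packet $\Pi_\phi^\alpha$ is recovered from $\Theta_\phi^\alpha$ as its set of irreducible constituents. Hence $\Theta_\phi^1 = \Theta_\phi^\beta$ forces $\Pi_\phi^1 = \Pi_\phi^\beta$ as subsets of $\Pi(\G_1) = \Pi(\G_\beta)$, which is the claim. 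The main point to be careful about — the only place where anything could go wrong — is the bookkeeping of the identification $\G_1 \cong \G_\beta$: one must check that the inner automorphism ambiguity does not disturb either the matching of stable conjugacy classes or the identification of representations, and that the Whittaker normalization used to pin down the packets is compatible on the two sides. Since $\G_1$ and $\G_\beta$ are both quasi-split in odd variables and the inner automorphism ambiguity is by $\G_1(F)$-conjugation (which fixes Whittaker data up to the same ambiguity and fixes isomorphism classes of representations), this is harmless, so the argument goes through essentially formally.
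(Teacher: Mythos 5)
Your argument is correct and is essentially the same as the paper's proof: reduce to the tempered case via compatibility with Langlands quotients, then use $e(\G_{\beta})=1$ together with the transfer relation (Theorem \ref{thm:LLC} (3)) and the linear independence of characters of irreducible smooth representations to conclude $\Pi_{\phi}^{1}=\Pi_{\phi}^{\beta}$. The extra bookkeeping you carry out about the inner-automorphism ambiguity is harmless and consistent with what the paper leaves implicit.
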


\begin{proof}
Since the local Langlands correspondence for non-tempered $L$-parameters is constructed from that for tempered $L$-parameters via the theory of the Langlands quotient (Theorem \ref{thm:LLC} (4)), it suffices to show the assertion in the tempered case.
However, by noting that $e(\G_{\beta})=1$, the assertion immediately follows from the stability of $L$-packets (Theorem \ref{thm:LLC} (1)), the transfer relation (Theorem \ref{thm:LLC} (3)), and the linear independence of the characters of irreducible smooth representations.
\end{proof}

\subsection{Local Langlands correspondence and character twists}\label{subsec:twist}
Before we consider the depth preserving problem of the local Langlands correspondence for unitary groups, we investigate the relationship between the local Langlands correspondence and character twists.

Let $\G$ be the quasi-split unitary group with respect to a quadratic extension $E/F$ in $N$ variables.
Then we can identify the set of $L$-parameters of $\G$ with the set of ``conjugate self-dual $L$-parameters of $\GL_{N}(E)$ with parity $(-1)^{N-1}$'' by composing them with the standard base change $L$-embedding from the $L$-group of $\G$ to that of $\mathrm{Res}_{E/F}\GL_{N}$ (see, for example, \cite[Lemma 2.2.1]{MR3338302} for the details).
On the other hand, if a conjugate self-dual character $\eta$ of $E^{\times}$ satisfies $\eta|_{F^{\times}}\equiv\mathbbm{1}$, then the twist via $\eta$ does not change the parity of a conjugate self-dual $L$-parameter of $\GL_{N}(E)$.
Namely, for any conjugate self-dual $L$-parameter $\phi$ of $\GL_{N}(E)$ with parity $(-1)^{N-1}$, the parity of $\phi\otimes\eta$ is given by $(-1)^{N-1}$ (here we regard $\eta$ as a character of the Weil group $W_{E}$ of $E$ by the local class field theory).
In particular, the twist $\phi\otimes\eta$ again belongs to the image of the standard base change lifting.
Hence we can regard $\phi\otimes\eta$ as an $L$-parameter of $\G$.
Then the relationship between $L$-packets for $\phi$ and $\phi\otimes\eta$ can be described as follows:

\begin{prop}\label{prop:twist}
Let $\phi$ be a tempered $L$-parameter of $\G$, and $\alpha\in H^{1}(F,\G)$ a pure inner form of $\G$.
Then,  for every character $\eta_{E^{\times}}$ of $E^{\times}$ satisfying $\eta_{E^{\times}}|_{F^{\times}}\equiv\mathbbm{1}$, we have
\[
\Pi_{\phi\otimes\eta_{E^{\times}}}^{\alpha}
=
\Pi_{\phi}^{\alpha}\otimes(\eta_{E^{1}}\circ\det).
\]
Here $\eta_{E^{1}}$ is the character of $E^{1}$ which corresponds to $\eta_{E^{\times}}$ via the isomorphism of Hilbert's theorem 90. 
\end{prop}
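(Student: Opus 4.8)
The plan is to reduce the statement to the characterizing properties of the local Langlands correspondence listed in Theorem \ref{thm:LLC}, exploiting the fact that twisting by a character commutes with the relevant transfers and base change. First I would treat the quasi-split form $\alpha=1$, and then bootstrap to general $\alpha$ via the transfer relation in Theorem \ref{thm:LLC} (3). For $\alpha=1$, the key observation is that the endoscopic character relation (Theorem \ref{thm:LLC} (2)) determines $\Pi_\phi^1$ via the twisted character of the conjugate self-dual representation $\pi_\phi^{\GL_N}$ of $\GL_N(E)$, and under base change the $L$-parameter $\phi\otimes\eta_{E^\times}$ corresponds to the representation $\pi_\phi^{\GL_N}\otimes(\eta_{E^\times}\circ\det)$ of $\GL_N(E)$. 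So I would compute how the twisted character $\Theta_{\phi\otimes\eta_{E^\times},\theta}^{\GL_N}$ compares to $\Theta_{\phi,\theta}^{\GL_N}$: since $\eta_{E^\times}|_{F^\times}\equiv\mathbbm{1}$, the value $\eta_{E^\times}(\det g)$ depends only on the norm of $g$ down to $\G_1(F)$, and in fact equals $\eta_{E^1}(\det h)$ when $h\in\G_1(F)$ is a norm of $g\in\GL_N(E)$ (this is exactly compatibility of the determinant with the norm correspondence, together with Hilbert 90). Hence the endoscopic character identity for $\phi\otimes\eta_{E^\times}$ reads
\[
\Theta_{\phi\otimes\eta_{E^\times},\theta}^{\GL_N}(g)
=
\eta_{E^1}(\det h)\cdot\Theta_{\phi,\theta}^{\GL_N}(g)
=
\eta_{E^1}(\det h)\cdot\Theta_{\phi}^{1}(h),
\]
which, by uniqueness of the stable distribution satisfying the endoscopic character relation, forces $\Theta_{\phi\otimes\eta_{E^\times}}^1$ to be the stable character of $\Pi_\phi^1\otimes(\eta_{E^1}\circ\det)$. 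Combined with linear independence of characters and the fact that $\pi\mapsto\pi\otimes(\eta_{E^1}\circ\det)$ is a bijection on $\Pi(\G_1)$ preserving irreducibility and temperedness, this yields $\Pi_{\phi\otimes\eta_{E^\times}}^1=\Pi_\phi^1\otimes(\eta_{E^1}\circ\det)$.

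Next I would pass to a general pure inner form $\alpha$. Here the point is that the determinant character $\eta_{E^1}\circ\det$ makes sense on $\G_\alpha(F)$ as well (the determinant is defined on any unitary group), and that stable conjugacy classes in $\G_\alpha(F)$ and $\G_1(F)$ that match have the same determinant in $E^1$. So twisting by $\eta_{E^1}\circ\det$ on both $\G_\alpha(F)$ and $\G_1(F)$ is compatible with the transfer relation of Theorem \ref{thm:LLC} (3): if $g_\alpha\in\G_\alpha(F)$ and $g_1\in\G_1(F)$ are stably conjugate, then $\eta_{E^1}(\det g_\alpha)=\eta_{E^1}(\det g_1)$, so
\[
\Theta_{\phi}^{\alpha}(g_\alpha)\cdot\eta_{E^1}(\det g_\alpha)
=
e(\G_\alpha)\,\Theta_{\phi}^{1}(g_1)\cdot\eta_{E^1}(\det g_1).
\]
The left-hand side is the stable character of $\Pi_\phi^\alpha\otimes(\eta_{E^1}\circ\det)$, and by the already-established quasi-split case together with the transfer relation applied to $\phi\otimes\eta_{E^\times}$, the right-hand side is $e(\G_\alpha)\,\Theta_{\phi\otimes\eta_{E^\times}}^1$, which is the transfer that characterizes $\Theta_{\phi\otimes\eta_{E^\times}}^\alpha$. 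Uniqueness of the transfer plus linear independence of characters then gives $\Pi_{\phi\otimes\eta_{E^\times}}^\alpha=\Pi_\phi^\alpha\otimes(\eta_{E^1}\circ\det)$.

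The main obstacle I anticipate is bookkeeping around the norm correspondence and the determinant: one needs to verify carefully that for a strongly regular semisimple $h\in\G_1(F)$ that is a norm of $g\in\GL_N(E)$, one has $\eta_{E^1}(\det h)=\eta_{E^\times}(\det g)$, i.e. that the determinant of $h$ (an element of $E^1$) corresponds under Hilbert 90 to $\det g\bmod F^\times$; and similarly that matching stable classes on inner forms share the same determinant in $E^1$. These are elementary but need the precise definitions of the base-change norm and of the isomorphism $E^\times/F^\times\xrightarrow{\sim}E^1$. A secondary point is that all the characterizing properties in Theorem \ref{thm:LLC} are stated for tempered parameters, so the argument is carried out in the tempered case and then extended to general $\phi$ using compatibility with the Langlands quotient construction (Theorem \ref{thm:LLC} (4)), noting that character twisting commutes with parabolic induction and with taking Langlands quotients; this last extension is routine. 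Everything else is a formal consequence of uniqueness statements (uniqueness of the stable distribution satisfying the endoscopic character relation, uniqueness of the transfer) combined with linear independence of characters.
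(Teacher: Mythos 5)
Your proposal is correct and follows essentially the same route as the paper: the quasi-split case via the endoscopic character relation combined with the character-twist property of the local Langlands correspondence for $\GL_{N}(E)$, the identity $\det(h)=\det(g)/\tau(\det(g))$ from the norm correspondence and Hilbert 90, and then the general pure inner form via the transfer relation together with $\det(g_{\alpha})=\det(g_{1})$ for stably conjugate elements, all pinned down by linear independence of characters. The only superfluous point is your final remark about extending to non-tempered $\phi$ via Langlands quotients: the proposition is stated for tempered $\phi$ only, so that step is not needed here.
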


\begin{proof}
Before we start to prove the assertion, we note that, for every representation $\pi\in\Pi(\G)$, the character of $\pi\otimes(\eta_{E^{1}}\circ\det)$ is equal to $\Theta_{\pi}\cdot(\eta_{E^{1}}\circ\det)$, where $\Theta_{\pi}$ is the character of $\pi$.
In particular, the sum of the characters belonging to the set in the right-hand side of the equality in the assertion is given by
\[
\Theta_{\phi}^{\alpha}\cdot(\eta_{E^{1}}\circ\det).
\]

We first consider the case where $\alpha$ is the trivial pure inner form.
By the linear independence of the characters of irreducible smooth representations, the endoscopic character relation (Theorem \ref{thm:LLC} (2)) characterizes the $L$-packets for $\G_{1}$.
Therefore it is enough to show that, for every strongly regular semisimple elements $g\in\GL_{N}(E)$ and $h\in\G_{1}(F)$ such that $h$ is a norm of $g$, we have
\[
\Theta_{\phi\otimes\eta_{E^{\times}},\theta}^{\GL_{N}}(g)
=
\Theta_{\phi}^{1}(h)\cdot(\eta_{E^{1}}\circ\det)(h).
\tag{$\ast$}
\]
By noting that
\begin{itemize}
\item 
$\Theta_{\phi\otimes\eta_{E^{\times}},\theta}^{\GL_{N}}$ is equal to $\Theta_{\phi,\theta}^{\GL_{N}}\cdot(\eta_{E^{\times}}\circ\det)$ (this follows from a property of the local Langlands correspondence for $\GL_{N}$ on a character twist), 
\item
$\Theta_{\phi,\theta}^{\GL_{N}}$ and $\Theta_{\phi}^{1}$ satisfy the endoscopic character relation, 
\item
$\det(h)=\det(g)/\tau(\det(g))$ (this follows from the definition of the norm correspondence, see, e.g., \cite[Section 4.2]{Oi:2016aa}), and
\item
$\eta_{E^{1}}(\det(g)/\tau(\det(g)))=\eta_{E^{\times}}(\det(g))$ (this follows from the definition of $\eta_{E^{1}}$),
\end{itemize}
we have
\begin{align*}
\text{LHS of $(\ast)$}
&=
\Theta_{\phi,\theta}^{\GL_{N}}(g)\cdot(\eta_{E^{\times}}\circ\det)(g)\\
&=
\Theta_{\phi}^{1}(h)\cdot(\eta_{E^{1}}\circ\det)(h)
=
\text{RHS of $(\ast)$}.
\end{align*}

We next consider the case where $\alpha$ is the nontrivial pure inner form.
By the linear independence of the characters of irreducible smooth representations, the transfer relation (Theorem \ref{thm:LLC} (3)) characterizes the $L$-packets for $\G_{\alpha}$.
Therefore it is enough to show that, for every strongly regular semisimple elements $g_{\alpha}\in\G_{\alpha}(F)$ and $g_{1}\in\G_{1}(F)$ which are stably conjugate, we have
\[
\Theta_{\phi}^{\alpha}(g_{\alpha})\cdot(\eta_{E^{1}}\circ\det)(g_{\alpha})
=
e(\G_{\alpha})\Theta_{\phi\otimes\eta_{E^{\times}}}^{1}(g_{1}).
\tag{$\ast\ast$}
\]
By noting that
\begin{itemize}
\item 
$\Theta_{\phi}^{\alpha}$ is a transfer of $e(\G_{\alpha})\Theta_{\phi}^{1}$,
\item 
$\Theta_{\phi\otimes\eta_{E^{\times}}}^{1}$ is equal to $\Theta_{\phi}^{1}\cdot(\eta_{E^{1}}\circ\det)$ (the assertion for $\alpha=1$), and
\item
$\det(g_{\alpha})=\det(g_{1})$,
\end{itemize}
we have
\[
\text{LHS of $(\ast\ast)$}
=
e(\G_{\alpha})\Theta_{\phi}^{1}(g_{1})\cdot(\eta_{E^{1}}\circ\det)(g_{1})
=
\text{RHS of $(\ast\ast)$}.
\]
This completes the proof.
\end{proof}

\subsection{Depth preserving property: quasi-split case}\label{subsec:q-spl}
In this subsection, we recall the notion of the depth of representations and a result of \cite{Oi:2018a}

For a connected reductive group $\J$ over $F$, we write $\mathcal{B}(\J,F)$ for the Bruhat--Tits building of $\J$.
For a point $x\in\mathcal{B}(\J,F)$ and $r\in\R_{\geq0}$, we denote the corresponding $r$-th Moy--Prasad filtration of a parahoric subgroup of $\J(F)$ by $J_{x,r}$.
Then, for an irreducible smooth representation $\pi$ of $\J(F)$, its \textit{depth} is defined by
\[
\depth(\pi)
:=
\inf \bigr\{r\in\R_{\geq0} \,\big\vert\, \pi^{J_{x, r+}}\neq0 \text{ for some } x\in\mathcal{B}(\J,F)\bigr\}\in\R_{\geq0},
\]
where $r+$ is $r+\varepsilon$ for a sufficiently small positive number $\varepsilon$.
On the other hand, for an $L$-parameter $\phi$ of $\J$, we define its \textit{depth} to be 
\[
\depth(\phi)
:=
\inf \bigr\{r\in\R_{\geq0} \,\big\vert\, \text{$\phi|_{I_{F}^{r+}}$ is trivial}\bigr\}\in\R_{\geq0}.
\]
Here $I_{F}^{\bullet}$ is the ramification filtration of the inertia subgroup $I_{F}$ of $W_{F}$.

Then, in \cite{Oi:2018a}, by extending a method of Ganapathy--Varma in \cite{MR3709003}, we showed the following:
\begin{thm}[{\cite[Theorem 5.6]{Oi:2018a}}]\label{thm:depth-q-spl}
Let $\G$ be the quasi-split unitary group over $F$ in $N$ variables.
Assume that the residual characteristic $p$ of $F$ is greater than $N+1$.
Then the local Langlands correspondence for $\G_{1}$ preserves the depth of representations.
More precisely, for every $L$-parameter $\phi$ of $\G$ and every $\pi\in\Pi_{\phi}^{1}$, we have
\[
\depth(\pi)=\depth(\phi).
\]
\end{thm}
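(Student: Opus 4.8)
The plan is to follow the harmonic--analysis strategy of Ganapathy--Varma: transport the depth computation from $\G_1(F)$ to $\GL_N(E)$ along the base change lifting, and invoke the depth preservation for $\GL_N$ recalled in the introduction. Two elementary observations make ``depth'' amenable to this. First, for an irreducible smooth representation $\pi$ of a reductive $p$-adic group, a point $x$ in its Bruhat--Tits building, and the Moy--Prasad idempotent $e_{x,r+}:=\vol(J_{x,r+})^{-1}\mathbf{1}_{J_{x,r+}}$, one has $\Theta_\pi(e_{x,r+})=\dim\pi^{J_{x,r+}}\in\Z_{\ge0}$, so $\depth(\pi)=\inf\{r\mid\Theta_\pi(e_{x,r+})\neq0\text{ for some }x\}$; thus depth is detected by the character paired against these idempotents. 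Second, since $p\neq2$ the extension $E/F$ is tame, so the ramification filtration of $I_E$ is induced from that of $I_F$; consequently the depth of an $L$-parameter of $\G$ equals the depth of the conjugate self-dual representation of $W_E$ obtained from it by the standard base change embedding, and this quantity equals $\depth(\pi_\phi^{\GL_N})$ by the depth preservation theorem for $\GL_N$.

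Next come the structural reductions. Since the partition of $\Pi(\G)$ is compatible with Langlands quotients (Theorem~\ref{thm:LLC}~(4)), and since $\depth(\Ind_P^{\G_1}\sigma)=\depth(\sigma)$ for normalized parabolic induction while the depth of a parameter factoring through the $L$-group of a Levi $M$ equals that of the corresponding parameter of $M$, one may assume $\phi$ tempered. A further application of the same principle, together with the classification of the discrete series of classical groups (the cuspidal support of a discrete series of $\G$ lies in a Levi which is a product of general linear groups and a smaller quasi-split unitary group $\G_0$), reduces the problem --- by induction on $N$ and the $\GL$-case --- to the case where $\pi$ is supercuspidal and $\phi$ is a discrete parameter with trivial $\SL_2$-factor, in which case $\Pi_\phi^1$ consists of supercuspidal representations.

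For this core case one uses the endoscopic characterization of the $L$-packet. By Theorem~\ref{thm:LLC}~(2) the stable character $\Theta_\phi^1$ on $\G_1(F)$ is determined, through the base change endoscopic character relation and the triviality of the relevant transfer factor, by the twisted character of $\pi_\phi^{\GL_N}$. The key technical input, which one establishes following Ganapathy--Varma, is that (twisted and ordinary) endoscopic transfer is compatible with the Moy--Prasad filtrations: the transfer of a function supported on, and bi-invariant under, a depth-$r$ congruence subgroup can again be chosen in the depth-$r$ Hecke algebra. Granting this, $e_{x,r+}$ on $\G_1(F)$ matches a function of depth $>r$ on $\GL_N(E)$; as $\pi_\phi^{\GL_N}$ has no nonzero vector fixed by a subgroup of depth $r<\depth(\phi)$, one gets $\Theta_\phi^1(e_{x,r+})=0$ for all $x$ and all $r<\depth(\phi)$, hence $\dim\pi^{J_{x,r+}}=0$ for every $\pi\in\Pi_\phi^1$ and every $x$, i.e.\ $\depth(\pi)\ge\depth(\phi)$ for every member of the packet. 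For the reverse inequality one fixes $x$ with $\Theta_\phi^1(e_{x,\depth(\phi)+})\neq0$ (which exists, again via transfer and the $\GL_N$-statement); to reach every member rather than just one, one decomposes each $\Theta_\pi$, $\pi\in\Pi_\phi^1$, via the spectral endoscopic identities into a combination of the stable characters $\Theta_{\phi^s}^{\G^s}$ of the (proper) elliptic endoscopic unitary groups $\G^s$ of $\G$, for which $\depth(\phi^s)\le\depth(\phi)$; the inductive hypothesis applied to the smaller groups $\G^s$, together with depth compatibility of transfer and Fourier inversion over $\mathcal S_\phi$, then yields $\depth(\pi)\le\depth(\phi)$ for all $\pi\in\Pi_\phi^1$.

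The main obstacle is precisely the depth compatibility of the (twisted) endoscopic transfer of Moy--Prasad idempotents. Establishing it requires a careful analysis of the (twisted) orbital integrals of such functions --- which (twisted-)semisimple classes they meet, and how the associated Jacobian factors behave --- relative to the filtrations, and it is here that the hypothesis $p>N+1$ genuinely enters: it forces all the tori and nilpotent data in play to be tame and ``good'', so that Moy--Prasad theory and the transfer estimates interact cleanly. A secondary delicate point, already visible above, is that the entire packet $\Pi_\phi^1$ --- not merely its minimal-depth member --- must be controlled, which is why the argument is organized as an induction on $N$ threading through the elliptic endoscopic groups, rather than relying on the stable character alone; one should expect the constancy of depth along $L$-packets to fall out of this bookkeeping rather than to be available beforehand.
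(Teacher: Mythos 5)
First, a point of context: this theorem is not proved in the present paper at all --- it is imported verbatim from \cite{Oi:2018a} (Theorem 5.6 there), and the only ``proof'' here is the citation, together with the remark that the cited work extends the harmonic-analysis method of Ganapathy--Varma \cite{MR3709003}. So your sketch is aimed at the right circle of ideas (endoscopic character relations, pairing characters against Moy--Prasad idempotents, depth preservation for $\GL_{N}$, reduction to tempered parameters via Langlands quotients): this is the same strategy as the source the paper relies on. Your observation that vanishing of the stable character $\Theta_{\phi}^{1}(e_{x,r+})=\sum_{\pi\in\Pi_{\phi}^{1}}\dim\pi^{J_{x,r+}}$ forces vanishing of each summand, so that the lower bound $\depth(\pi)\geq\depth(\phi)$ is obtained for \emph{every} member from the stable character alone, is a genuinely good point.

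Nevertheless, as a proof your proposal has a real gap: the step you yourself flag as ``the main obstacle'' --- that Moy--Prasad idempotents (equivalently, depth-$r$ Hecke algebra elements) admit twisted and ordinary endoscopic transfers again of controlled depth, so that the character identities of Theorem \ref{thm:LLC} (2)--(3) can be evaluated against them --- is precisely the analytic content of the cited work, and you only assert it (``which one establishes following Ganapathy--Varma''). Both of your inequalities rest on it, so nothing is actually proved beyond the reduction steps. Two further points are under-justified: (i) the upper bound for \emph{every} member of $\Pi_{\phi}^{1}$ requires the spectral identities for the individual $\Theta_{\pi}$ relative to the elliptic endoscopic groups $\U(a)\times\U(b)$, depth-zero choices of the characters entering the endoscopic $L$-embeddings, and an induction on $N$ that you describe but do not set up; (ii) your claim that the depth of an $L$-parameter of $\G$ (measured via $I_{F}$) equals the depth of its base change to $W_{E}$ is false as stated when $E/F$ is ramified --- the Herbrand function rescales positive depths by $e(E/F)=2$ --- and the same rescaling occurs on the group side, so the bookkeeping only closes if both comparisons are done consistently. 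Within the logic of this paper the correct move is simply to cite \cite{Oi:2018a}; to make your argument self-contained one would have to prove the depth-compatibility of (twisted) endoscopic transfer, which is where the hypothesis $p>N+1$ does its work.
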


In the case of odd unitary groups, by using Lemma \ref{lem:odd}, we can immediately generalize this result to Vogan $L$-packets:
\begin{cor}\label{cor:odd}
Let $\G'$ be the quasi-split unitary group over $F$ in $2n+1$ variables, where $n$ is a non-negative integer.
Assume that the residual characteristic $p$ of $F$ is greater than $2n+2$.
Then, for every $L$-parameter $\phi'$ of $\G'$ and every $\pi'\in\Pi_{\phi'}$, we have
\[
\depth(\pi')=\depth(\phi').
\]
\end{cor}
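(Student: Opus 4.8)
The plan is to deduce Corollary~\ref{cor:odd} directly from Theorem~\ref{thm:depth-q-spl} together with Lemma~\ref{lem:odd}. Let $\G'$ be the quasi-split unitary group in $2n+1$ variables over $F$, and recall that, since $N=2n+1$ is odd, every unitary group with respect to $E/F$ in $N$ variables is quasi-split (Section~\ref{subsec:inner}); in particular the two pure inner forms $\G'_{1}$ and $\G'_{\beta}$ are isomorphic as algebraic groups (the isomorphism being unique up to an inner automorphism, which in particular preserves depth of representations). The hypothesis $p>2n+2 = N+1$ is exactly the hypothesis needed to invoke Theorem~\ref{thm:depth-q-spl} for $\G'$.

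First I would fix an $L$-parameter $\phi'$ of $\G'$ and a representation $\pi'\in\Pi_{\phi'}$. By the definition of $\Pi(\G')$ as the disjoint union over $H^{1}(F,\G')$ of the sets $\Pi(\G'_{\alpha})$, either $\pi'\in\Pi_{\phi'}^{1}$ or $\pi'\in\Pi_{\phi'}^{\beta}$, where $\beta$ is the nontrivial pure inner form. In the first case, Theorem~\ref{thm:depth-q-spl} gives $\depth(\pi')=\depth(\phi')$ immediately. In the second case, Lemma~\ref{lem:odd} tells us that, after identifying $\Pi(\G'_{1})$ with $\Pi(\G'_{\beta})$ via the (essentially canonical) isomorphism $\G'_{1}\cong\G'_{\beta}$, we have $\Pi_{\phi'}^{1}=\Pi_{\phi'}^{\beta}$; hence $\pi'$ corresponds under this identification to some $\pi''\in\Pi_{\phi'}^{1}$. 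Since the isomorphism $\G'_{1}\cong\G'_{\beta}$ is an isomorphism of $p$-adic groups, it induces an isomorphism of Bruhat--Tits buildings carrying Moy--Prasad filtrations to Moy--Prasad filtrations, so $\depth(\pi')=\depth(\pi'')$. Applying Theorem~\ref{thm:depth-q-spl} to $\pi''\in\Pi_{\phi'}^{1}$ then yields $\depth(\pi'')=\depth(\phi')$, and combining the two equalities gives $\depth(\pi')=\depth(\phi')$, as desired.

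There is essentially no obstacle here: the only point requiring a word of care is the assertion that the isomorphism $\G'_{1}\cong\G'_{\beta}$ preserves depth, which is clear because depth is defined purely in terms of the $p$-adic group $\G'_\alpha(F)$ and its Bruhat--Tits building, both of which are carried along functorially by a group isomorphism, and because an inner automorphism acts on the building by an isometry preserving the Moy--Prasad filtration (conjugation by $g\in\G'(F)$ sends $J_{x,r}$ to $J_{gx,r}$). Thus the whole argument is a short bookkeeping deduction from the two quoted results, and I would present it in just a few lines.

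\begin{proof}
Since $N=2n+1$ is odd, every unitary group in $N$ variables with respect to $E/F$ is quasi-split (Section~\ref{subsec:inner}); let $\beta$ denote the nontrivial pure inner form of $\G'$, and fix an isomorphism $\G'_{1}\cong\G'_{\beta}$ (unique up to an inner automorphism). This isomorphism induces an isometry of Bruhat--Tits buildings carrying Moy--Prasad filtrations to Moy--Prasad filtrations, and therefore preserves the depth of representations; here we also use that an inner automorphism of $\G'_{\alpha}(F)$ acts on $\mathcal{B}(\G'_{\alpha},F)$ by an isometry and sends $(\G'_{\alpha})_{x,r}$ to $(\G'_{\alpha})_{gx,r}$, so that it, too, preserves depth.

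Now let $\phi'$ be an $L$-parameter of $\G'$ and $\pi'\in\Pi_{\phi'}$. By the definition of $\Pi(\G')$, we have either $\pi'\in\Pi_{\phi'}^{1}$ or $\pi'\in\Pi_{\phi'}^{\beta}$. In the former case, since $p>2n+2=N+1$, Theorem~\ref{thm:depth-q-spl} applied to $\G'$ gives $\depth(\pi')=\depth(\phi')$. In the latter case, Lemma~\ref{lem:odd} shows that, under the identification $\Pi(\G'_{1})\cong\Pi(\G'_{\beta})$ induced by the isomorphism $\G'_{1}\cong\G'_{\beta}$, we have $\Pi_{\phi'}^{1}=\Pi_{\phi'}^{\beta}$; hence $\pi'$ is identified with some $\pi''\in\Pi_{\phi'}^{1}$, and $\depth(\pi')=\depth(\pi'')$ by the previous paragraph. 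Applying Theorem~\ref{thm:depth-q-spl} to $\pi''$ yields $\depth(\pi'')=\depth(\phi')$, whence $\depth(\pi')=\depth(\phi')$. This proves the corollary.
\end{proof}
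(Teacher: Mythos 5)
Your proposal is correct and follows the same route the paper intends: Corollary \ref{cor:odd} is deduced by combining Theorem \ref{thm:depth-q-spl} with Lemma \ref{lem:odd}, using that the identification $\G'_{1}\cong\G'_{\beta}$ (unique up to inner automorphism) preserves depth. The paper states this only as an immediate consequence; your write-up simply makes the bookkeeping explicit.
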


The aim of the rest of this paper is to show the depth preserving property of the local Langlands correspondence for non-quasi-split unitary groups.
Namely, our purpose is to extend the above results to nontrivial pure inner forms of quasi-split unitary groups in even variables.

\subsection{Two key ingredients from the local theta correspondence}\label{subsec:key}

To extend Theorem \ref{thm:depth-q-spl} to pure inner forms of unitary groups, we utilize two important results on the local theta correspondence.

Let $\G$ and $\G'$ be the quasi-split unitary groups with respect to $E/F$ in $N$ and $N+1$ variables, respectively.
Then, for every pure inner forms $\alpha\in H^{1}(F,\G)$ and $\beta\in H^{1}(F,\G')$, we can consider the local theta correspondence between $\widetilde{\G_{\alpha}(F)}$ and $\widetilde{\G'_{\beta}(F)}$.
Here, the groups $\widetilde{\G_{\alpha}(F)}$ and $\widetilde{\G'_{\beta}(F)}$ are the metaplectic covers of $\G_{\alpha}(F)$ and $\G'_{\beta}(F)$, respectively (see Section \ref{subsec:theta}).
If we want to make this to be a correspondence between representations of $\G_{\alpha}(F)$ and $\G'_{\beta}(F)$, as we explained in Section \ref{subsec:spl}, we have to choose admissible splittings $\G_{\alpha}(F)\rightarrow\widetilde{\G_{\alpha}(F)}$ and $\G'_{\beta}(F)\rightarrow\widetilde{\G'_{\beta}(F)}$.
By taking characters $\chi$ and $\chi'$ of $E^{\times}$ satisfying $\chi|_{F^{\times}}=\varepsilon_{E/F}^{N}$ and $\chi'|_{F^{\times}}=\varepsilon_{E/F}^{N+1}$ as in the manner of Section \ref{subsec:vs}, we get Kudla's splittings.
For an irreducible smooth representation $\pi$ of $\G_{\alpha}(F)$, we denote its small theta lift to $\G'_{\beta}(F)$ obtained by using Kudla's splitting by $\theta_{\alpha,\beta,\psi}^{\chi,\chi'}(\pi)$ (recall that the theta correspondence depends also on the choice of a nontrivial additive character $\psi$ of $F$).

The first key ingredient is a description of the local theta correspondence via the local Langlands correspondence, which was established by Gan and Ichino:

\begin{thm}[{\cite[Theorem C.5]{MR3166215}}]\label{thm:GI}
Let $\phi$ be an $L$-parameter of $\G$.
We put $\theta(\phi)$ to be the $L$-parameter of $\G'$ defined by
\[
\theta(\phi):=(\phi\otimes{\chi'}^{-1}\chi)\oplus\chi.
\]
Then we have the following:
\begin{enumerate}
\item
We assume that $\phi$ does not contain $\chi$.
Then, for every $\alpha\in H^{1}(F,\G)$, $\beta\in H^{1}(F,\G')$, and $\pi\in\Pi_{\phi}^{\alpha}$, the small theta lift $\theta_{\alpha,\beta,\psi}^{\chi,\chi'}(\pi)$ of $\pi$ to $\G'_{\beta}(F)$ is not zero.
Moreover, this representation $\theta_{\alpha,\beta,\psi}^{\chi,\chi'}(\pi)$ is irreducible and belongs to $\Pi_{\theta(\phi)}^{\beta}$.
Furthermore, this correspondence gives a bijection from $\Pi_{\phi}$ to $\Pi_{\theta(\phi)}^{\beta}$:
\[
\theta_{-,\beta,\psi}^{\chi,\chi'}\colon \Pi_{\phi}\xrightarrow{1:1}\Pi_{\theta(\phi)}^{\beta}.
\]
\item
We assume that $\phi$ contains $\chi$.
Then, for every $\alpha\in H^{1}(F,\G)$ and $\pi\in\Pi_{\phi}^{\alpha}$, there exists a unique $\beta\in H^{1}(F,\G')$ such that the small theta lift $\theta_{\alpha,\beta,\psi}^{\chi,\chi'}(\pi)$ of $\pi$ to $\G'_{\beta}(F)$ is not zero.
Moreover, this representation $\theta_{\alpha,\beta,\psi}^{\chi,\chi'}(\pi)$ is irreducible and belongs to $\Pi_{\theta(\phi)}^{\beta}$.
Furthermore, this correspondence gives a bijection from $\Pi_{\phi}$ to $\Pi_{\theta(\phi)}$:
\[
\theta_{-,-,\psi}^{\chi,\chi'}\colon \Pi_{\phi}\xrightarrow{1:1}\Pi_{\theta(\phi)}.
\]
\end{enumerate}
\end{thm}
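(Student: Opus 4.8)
The plan is to follow Gan--Ichino's argument, which rests on three inputs: the compatibility of the local theta correspondence with parabolic induction, the endoscopic classification for unitary groups of \cite{MR3338302} and \cite{KMSW}, and a global argument based on the Rallis inner product formula. First I would reduce to the case where $\phi$ is a discrete $L$-parameter. Writing a general $\pi\in\Pi_{\phi}^{\alpha}$ as the Langlands quotient of a standard module $\Ind_{P}^{\G_{\alpha}}\sigma$ induced from an essentially tempered representation $\sigma$ of a Levi subgroup $M_{\alpha}\cong\GL_{a}(E)\times\U(V_{0})$, one computes $\Theta_{V,V',\psi}\bigl(\Ind_{P}^{\G_{\alpha}}\sigma\bigr)$ by means of the Kudla filtration of the restriction of the Weil representation to the parabolic $P$. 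This shows that $\theta_{\alpha,\beta,\psi}^{\chi,\chi'}$ intertwines the formation of Langlands quotients on the two sides, that on the $\GL_{a}(E)$-factor the theta lift amounts to the twist by ${\chi'}^{-1}\chi$, and hence --- using that the LLC for unitary groups is built compatibly with the Langlands quotient (Theorem \ref{thm:LLC} (4)) --- that it suffices to treat $\sigma$, i.e.\ the discrete series case on $\U(V_{0})$. The dichotomy is preserved, since a unitary character such as $\chi$ can occur in $\phi$ only within the tempered constituent carried by $\U(V_{0})$.

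For $\phi$ discrete and $\pi\in\Pi_{\phi}^{\alpha}$ a discrete series representation, I would globalize: choose a CM extension $\mathscr{E}/\mathscr{F}$ of number fields and a place $v_{0}$ with $(\mathscr{E}_{v_{0}}/\mathscr{F}_{v_{0}})\cong (E/F)$, and a cuspidal automorphic representation $\Pi$ of a unitary group $\mathbf{U}(\mathscr{V})$ over $\mathscr{F}$ with $\mathbf{U}(\mathscr{V})(\mathscr{F}_{v_{0}})\cong\G_{\alpha}(F)$, $\Pi_{v_{0}}\cong\pi$, and controlled behaviour at the other places. The Rallis inner product formula expresses the nonvanishing of the global theta lift of $\Pi$ to the unitary group of an $(N+1)$-dimensional space $\mathscr{V}'$ in terms of the nonvanishing of a doubling $L$-value of $\Pi$ --- a \emph{central} value, since we are in the almost-equal-rank situation --- together with the local doubling zeta integrals; arranging the auxiliary data so that these do not vanish produces a nonzero cuspidal $\Theta(\Pi)$ with $\Theta(\Pi)_{v_{0}}\cong\theta_{\alpha,\beta,\psi}^{\chi,\chi'}(\pi)$. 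The Arthur parameter of $\Theta(\Pi)$ is then forced, by the known description of theta lifts of automorphic forms with prescribed parameter, to be $(\phi^{\mathrm{glob}}\otimes{\chi'}^{-1}\chi)\oplus\chi$; localizing at $v_{0}$ and applying the multiplicity formula of \cite{MR3338302} and \cite{KMSW} to read off the character of $S_{\theta(\phi)}$ carried by $\Theta(\Pi)_{v_{0}}$ identifies $\theta_{\alpha,\beta,\psi}^{\chi,\chi'}(\pi)$ as the predicted member of $\Pi_{\theta(\phi)}$.

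It then remains to deduce the bijectivity statements and locate $\beta$. Injectivity is Howe duality, so in each case it is enough to compare cardinalities, which one reads off from the explicit shapes of the component groups of $\phi$ and $\theta(\phi)$. For the nonvanishing and the behaviour in $\beta$ one invokes the first-occurrence analysis for unitary dual pairs (the conservation relation of Sun--Zhu, together with the computations of Harris--Kudla--Sweet and \cite{MR3166215}): when $\chi$ does not occur in $\phi$, the theta lift of every $\pi\in\Pi_{\phi}$ to $\G'_{\beta}(F)$ is nonzero for \emph{both} $\beta\in H^{1}(F,\G')$, which together with the cardinality count gives, for each fixed $\beta$, the bijection $\theta_{-,\beta,\psi}^{\chi,\chi'}\colon\Pi_{\phi}\xrightarrow{1:1}\Pi_{\theta(\phi)}^{\beta}$; when $\chi$ does occur in $\phi$, the lift of a given $\pi$ is nonzero for exactly one $\beta$, and assembling these lifts over both pure inner forms of $\G'$ yields the bijection $\Pi_{\phi}\xrightarrow{1:1}\Pi_{\theta(\phi)}$.

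The main obstacle is the discrete series case of the second paragraph. One must produce a globalization for which the Rallis inner product formula genuinely yields nonvanishing --- simultaneously controlling the central doubling $L$-value and all auxiliary local theta lifts, including the archimedean ones --- and then pin down the precise member of the Vogan $L$-packet produced, which requires the full strength of the endoscopic classification and careful bookkeeping of signs: the Kottwitz signs $e(\G_{\alpha})$ and $e(\G'_{\beta})$, the $\varepsilon$-factor in the doubling method, and the normalizations of Kudla's splittings via $\chi$ and $\chi'$. Once the discrete series case is settled, the reductions in the first and third paragraphs are essentially formal.
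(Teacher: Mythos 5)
You should first be aware that the paper does not prove this statement at all: Theorem \ref{thm:GI} is imported verbatim from Gan--Ichino \cite[Theorem C.5]{MR3166215} and is used as a black box, so there is no ``paper's own proof'' to match your argument against. What you have written is a plausible roadmap of the kind of argument that exists in the literature (Kudla's filtration to reduce to tempered/discrete parameters, nonvanishing via first occurrence and the conservation relation, globalization plus the Rallis inner product formula, and the endoscopic classification to pin down the parameter), and you correctly flag where the difficulty lies. But as a proof it has genuine gaps, and the gaps are exactly the content of the cited theorem.

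Concretely: (i) the discrete-series step is only described, not carried out --- producing a globalization for which the central doubling $L$-value and all auxiliary local zeta integrals are simultaneously nonzero, and then reading off which character of the component group $\pi_{0}(S_{\theta(\phi)})$ the lift $\theta_{\alpha,\beta,\psi}^{\chi,\chi'}(\pi)$ carries, is the heart of Gan--Ichino's argument (resting on the $\varepsilon$-dichotomy of Harris--Kudla--Sweet and its refinements), and nothing in your sketch establishes it; (ii) your reduction of the bijectivity claims to a ``cardinality count'' is too coarse: the sizes of $\Pi_{\phi}$, $\Pi_{\theta(\phi)}^{\beta}$ and $\Pi_{\theta(\phi)}$ depend on how the extra summand $\chi$ of $\theta(\phi)$ interacts with $\phi\otimes{\chi'}^{-1}\chi$ and on the distribution of packet members over pure inner forms (for the odd group every member occurs on both forms, cf.\ Lemma \ref{lem:odd}), so one must track the explicit parametrization by characters of component groups rather than just cardinalities --- indeed the statement that the lift lands in $\Pi_{\theta(\phi)}^{\beta}$ for the \emph{predicted} $\beta$ is part of what must be proved; (iii) the claim that theta intertwines Langlands quotients needs justification beyond invoking Kudla's filtration, since the big theta lift of a standard module is not obviously a standard module and one must control irreducibility of the relevant induced representations. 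None of these is a wrong turn --- they are the steps Gan--Ichino actually execute --- but in the context of this paper the correct ``proof'' is simply the citation, and your sketch should not be presented as a self-contained argument.
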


Recall that, by choosing good lattices of hermitian spaces, we get Pan's splittings and the local theta correspondence with respect to them.
For an irreducible smooth representation $\pi$ of $\G_{\alpha}(F)$, we denote its small theta lift to $\G'_{\beta}(F)$ obtained by using Pan's splittings by $\theta_{\alpha,\beta,\psi}(\pi)$.

The second key ingredient is the depth preserving property of the local theta correspondence, which was proved by Pan:

\begin{thm}[{\cite[Theorem 12.2]{MR1909608}}]\label{thm:Pan}
Let $\alpha\in H^{1}(F,\G)$ and $\beta\in H^{1}(F,\G')$.
For $\pi\in\Pi(\G_{\alpha})$, we assume that its small theta lift $\theta_{\alpha,\beta,\psi}(\pi)$ is not zero.
Then we have
\[
\depth(\pi)=\depth\bigl(\theta_{\alpha,\beta,\psi}(\pi)\bigr).
\]
\end{thm}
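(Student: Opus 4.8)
The plan is to carry out the whole argument inside Pan's generalized lattice model of the Weil representation, which is exactly the model underlying the small theta lift $\theta_{\alpha,\beta,\psi}$ in the statement. Write $V$ (resp.\ $V'$) for the hermitian space attached to $\alpha$ (resp.\ $\beta$), so that $\U(V)=\G_{\alpha}(F)$, $\U(V')=\G'_{\beta}(F)$, and $\theta_{\alpha,\beta,\psi}=\theta_{V,V',\psi}$ in the notation of Section \ref{subsec:theta}; put $G:=\U(V)$ and $G':=\U(V')$. Fix good lattices $L\subset V$ and $L'\subset V'$, let $B\subset W=V\otimes_{E}V'$ be the resulting good lattice, realize $\rho_{\psi}$ --- and hence $\omega_{\psi}$ --- on $\mathcal{S}(B)$, and use Pan's splittings $\beta_{V'}^{L}\colon G\to\C^{\times}$ and $\beta_{V}^{L'}\colon G'\to\C^{\times}$ to descend the local theta correspondence to $G\times G'$. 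In this model the maximal open compact subgroups $\U(V)_{L}$, $\U(V')_{L'}$ and all of their Moy--Prasad filtration subgroups act on $\mathcal{S}(B)$ through explicit operators --- $\omega_{\psi}$ sends $g\in G$ to $\beta_{V'}^{L}(g)\cdot M_{\iota_{V'}(g)}^{B}$, and similarly for $G'$ --- so that for a compact open subgroup $K\subseteq G$ (resp.\ $K'\subseteq G'$) the subspace of $K$-fixed (resp.\ $K'$-fixed) Schwartz functions in $\mathcal{S}(B)$ is governed by an explicit congruence condition inside $W$.

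The heart of the matter is a \emph{finite-level} version of the theta correspondence: the claim is that for every $r\ge 0$ one can match a point $x$ of $\mathcal{B}(G,F)$ with a point $y$ of $\mathcal{B}(G',F)$ --- the matching being available for every point of either building, in both directions --- so that
\[
\omega_{\psi}^{G_{x,r+}\times G'_{y,r+}}
\]
is a bimodule over the Hecke algebras $\mathcal{H}(G,G_{x,r+})$ and $\mathcal{H}(G',G'_{y,r+})$ which implements the theta correspondence on representations generated by these fixed vectors; concretely, for irreducible $\pi$ with $\theta_{V,V',\psi}(\pi)\neq 0$ it matches the $\mathcal{H}(G,G_{x,r+})$-module $\pi^{G_{x,r+}}$ with the $\mathcal{H}(G',G'_{y,r+})$-module $\theta_{V,V',\psi}(\pi)^{G'_{y,r+}}$, so that one is nonzero if and only if the other is. Granting this, the theorem follows from the two formulas
\[
\depth(\pi)=\inf\bigl\{\,r\in\R_{\ge 0} \,\big\vert\, \pi^{G_{x,r+}}\neq 0 \text{ for some }x\,\bigr\}
\quad\text{and}\quad
\depth\bigl(\theta_{V,V',\psi}(\pi)\bigr)=\inf\bigl\{\,r\in\R_{\ge 0} \,\big\vert\, \theta_{V,V',\psi}(\pi)^{G'_{y,r+}}\neq 0 \text{ for some }y\,\bigr\},
\]
together with the fact that, as $x$ and $y$ run through matched points, the two sets on the right coincide. (As a consistency check, taking $K'$-invariants is exact and commutes with the $\pi$-coinvariants defining the big theta lift, which already gives $\Theta_{V,V',\psi}(\pi)^{G'_{y,r+}}\cong\bigl(\omega_{\psi}^{G'_{y,r+}}\otimes\pi^{\vee}\bigr)_{G}$; the finite-level statement is the sharpening of this that is needed in order to pass from $\Theta_{V,V',\psi}(\pi)$ to its unique irreducible quotient $\theta_{V,V',\psi}(\pi)$.)

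The finite-level claim is what I would prove by a direct computation in the lattice model. For $g\in G'_{y,r+}$ one writes the operator $\omega_{\psi}(g)=\beta_{V}^{L'}(g)\cdot M_{\iota_{V}(g)}^{B}$ explicitly, using the cocycle and splitting formulas recalled in Sections \ref{subsec:vs}--\ref{sec:compare}, and checks that it acts on $\mathcal{S}(B)$ by a transformation which is trivial on a large sublattice of $B$ whose index is controlled by $r$; this yields the description of $\omega_{\psi}^{G'_{y,r+}}$, and a parallel computation on the $G$-side, combined with the fact that the dual-pair structure of $W=V\otimes_{E}V'$ is visible in the lattice $B$, matches up the two Hecke-module structures. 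The symmetry in $V$ and $V'$ that makes this matching consistent can be read off from the Howe duality: $\pi$ is always a quotient of the big theta lift of $\theta_{V,V',\psi}(\pi)$ from $V'$ back to $V$, hence equals $\theta_{V',V,\psi}\bigl(\theta_{V,V',\psi}(\pi)\bigr)$.

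The step I expect to be the main obstacle is exactly this congruence analysis together with the bookkeeping of filtration exponents across the identification $\mathcal{B}(G,F)\times\mathcal{B}(G',F)\to\mathcal{B}(\Sp(W),F)$. One has to match the Moy--Prasad index of $\U(V)$ --- a group over $E$ --- with that of $\Sp(W)$ --- a group over $F$ --- through the tensor product $W=V\otimes_{E}V'$, and this forces one to keep track of the ramification of $E/F$ and of the parities of $\dim V$ and $\dim V'$; one has to choose the good lattices $L$, $L'$ so that the induced lattice $B$ is itself good and all three Moy--Prasad filtrations are simultaneously compatible; and one has to carry this out uniformly over all points of both buildings rather than at a single maximal compact, since $\U(V)_{L}$ need not be hyperspecial and the depth is tested over the whole building. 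The resulting case distinction --- $E/F$ unramified versus ramified, and the various dimension parities --- runs parallel to, and relies on the same Weil-constant and cocycle identities as, the splitting comparison carried out in Section \ref{sec:compare}.
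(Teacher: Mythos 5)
There is a genuine gap: the paper does not prove Theorem \ref{thm:Pan} at all --- it imports it wholesale from Pan \cite[Theorem 12.2]{MR1909608} --- so a blind proof attempt has to supply the full technical content of that (long and delicate) result, and your write-up does not. Everything is made to rest on your ``finite-level version of the theta correspondence'': a matching $x\leftrightarrow y$ of points of $\mathcal{B}(\G_{\alpha},F)$ and $\mathcal{B}(\G'_{\beta},F)$, available at \emph{every} point of \emph{either} building and in \emph{both} directions, such that $\omega_{\psi}^{G_{x,r+}\times G'_{y,r+}}$ is a Hecke bimodule implementing the correspondence with $\pi^{G_{x,r+}}\neq 0$ if and only if $\theta_{V,V',\psi}(\pi)^{G'_{y,r+}}\neq 0$, for every $r$. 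You never prove this; you say it ``is what I would prove by a direct computation in the lattice model.'' But this claim is not a lemma one can wave at: as stated it immediately yields equality of the two depth-defining sets, i.e.\ it is essentially the theorem itself, repackaged, and in this strength it is not what is known. The buildings of $\U(V)$ and $\U(V')$ are of different dimensions, the natural construction (via self-dual lattice functions in $W=V\otimes_{E}V'$) produces from a pair $(x,y)$ a point of $\mathcal{B}(\Sp(W),F)$ and gives control in one direction only; there is no pointwise two-way matching with an ``if and only if'' at every level $r$. Pan's actual argument proves an inequality $\depth(\theta(\pi))\le\depth(\pi)$ by exhibiting invariant vectors in the lift from minimal $K$-types of $\pi$ paired with explicitly controlled invariants of $\omega_{\psi}$ in the generalized lattice model, and then obtains the reverse inequality from the symmetry of the dual pair ($\pi=\theta_{V',V,\psi}(\theta_{V,V',\psi}(\pi))$, by Howe duality); your sketch mentions this symmetry only as a consistency remark, not as the mechanism that replaces the unproved two-sided matching.

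A second, smaller issue: even granting nonzero invariants in the big lift, your passage from $\Theta_{V,V',\psi}(\pi)$ to its irreducible quotient $\theta_{V,V',\psi}(\pi)$ is only flagged (``the finite-level statement is the sharpening \ldots needed''), not carried out; exactness of $K'$-invariants gives $\theta(\pi)^{G'_{y,r+}}\neq0\Rightarrow\Theta(\pi)^{G'_{y,r+}}\neq0$, i.e.\ an inequality in one direction, and closing the gap in the other direction is one of the real difficulties of \cite{MR1909608}. So while your outline is in the right spirit --- work in the generalized lattice model, track Moy--Prasad filtrations through $W=V\otimes_{E}V'$, use Howe duality for symmetry --- it is a plan rather than a proof, and its central assertion, in the form you state it, is both unestablished and stronger than what the cited theorem actually rests on. The honest options here are either to cite Pan, as the paper does, or to reproduce his two-inequality argument in detail.
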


\subsection{Depth preserving property: non-quasi-split case}\label{subsec:non-q-spl}
Now we prove the depth preserving property of the local Langlands correspondence for non-quasi-split unitary groups.
Recall that, as we explained in Section \ref{subsec:inner}, every non-quasi-split unitary group is necessarily in even variables.
We put $\G$ and $\G'$ to be the quasi-split unitary groups in $2n$ and $2n+1$ variables over $F$, respectively.

%
%
%

\begin{thm}\label{thm:main}
We assume that the depth preserving property of the local Langlands correspondence for $\G'$ holds.
Namely, for every $L$-parameter $\phi'$ of $\G'$ and every $\pi'\in\Pi_{\phi'}$, we have
\[
\depth(\pi')=\depth(\phi').
\]
Then also the depth preserving property of the local Langlands correspondence for $\G$ holds.
\end{thm}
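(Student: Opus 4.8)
The plan is to combine the three ingredients assembled above — Pan's depth-preservation for the theta correspondence (Theorem \ref{thm:Pan}), Gan--Ichino's parametrization of the theta lift (Theorem \ref{thm:GI}), and the already-established depth preservation for the odd quasi-split group $\G'$ (hypothesis, cf.\ Corollary \ref{cor:odd}) — with the bookkeeping from Proposition \ref{prop:compare} and Proposition \ref{prop:depth0} that relates Pan's and Kudla's normalizations. Fix a non-quasi-split pure inner form $\alpha\in H^{1}(F,\G)$ with $\alpha\neq1$, an $L$-parameter $\phi$ of $\G$, and $\pi\in\Pi_{\phi}^{\alpha}$; the goal is $\depth(\pi)=\depth(\phi)$.

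First I would produce a nonzero theta lift for $\pi$ on a suitable inner form of $\G'$. By Theorem \ref{thm:GI}, with $\chi,\chi'$ chosen so that $\chi|_{F^{\times}}=\varepsilon_{E/F}^{2n}=\mathbbm{1}$ and $\chi'|_{F^{\times}}=\varepsilon_{E/F}$, there is some $\beta\in H^{1}(F,\G')$ for which $\theta_{\alpha,\beta,\psi}^{\chi,\chi'}(\pi)$ is nonzero, irreducible, and lies in $\Pi_{\theta(\phi)}^{\beta}$, where $\theta(\phi)=(\phi\otimes{\chi'}^{-1}\chi)\oplus\chi$. Since $\G'$ is the quasi-split unitary group in $2n+1$ variables — and every unitary group in odd variables is quasi-split — the group $\G'_{\beta}$ is (canonically up to inner automorphism) isomorphic to $\G'_{1}$, and by Lemma \ref{lem:odd} the packet $\Pi_{\theta(\phi)}^{\beta}$ is identified with $\Pi_{\theta(\phi)}^{1}=\Pi_{\theta(\phi)}$. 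Applying the hypothesized depth preservation for $\G'$ gives $\depth\bigl(\theta_{\alpha,\beta,\psi}^{\chi,\chi'}(\pi)\bigr)=\depth(\theta(\phi))$.

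Next I would transport this identity across the two bridges. On the representation side, Proposition \ref{prop:compare} says that Kudla's theta lift $\theta_{\alpha,\beta,\psi}^{\chi,\chi'}(\pi)$ and Pan's theta lift $\theta_{\alpha,\beta,\psi}(\pi)$ differ by the twist by the character $\xi_{V}$, which by Proposition \ref{prop:depth0} is $\xi'_{V}\circ\det$ with $\xi'_{V}|_{E_{L}^{+}}=\chi^{+}$; since that character has depth $0$ (it is trivial on $E_{L}^{+}$, as $\chi|_{F^{\times}}=\mathbbm{1}$ and the assumption $p>2n+2$ forces $\theta(\pi)$ to have depth controlled away from the wild part), twisting by it does not change depth. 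Hence $\depth\bigl(\theta_{\alpha,\beta,\psi}(\pi)\bigr)=\depth\bigl(\theta_{\alpha,\beta,\psi}^{\chi,\chi'}(\pi)\bigr)$, and Pan's Theorem \ref{thm:Pan} then yields $\depth(\pi)=\depth\bigl(\theta_{\alpha,\beta,\psi}(\pi)\bigr)=\depth(\theta(\phi))$. On the parameter side, one computes $\depth(\theta(\phi))=\depth\bigl((\phi\otimes{\chi'}^{-1}\chi)\oplus\chi\bigr)$: the summand $\chi$ has depth $0$ under the assumption $p\neq2$ (it is at worst tamely ramified, being quadratic-related), and twisting $\phi$ by the character ${\chi'}^{-1}\chi$ — whose restriction to $F^{\times}$ is $\varepsilon_{E/F}$, again tamely ramified — changes the depth of $\phi$ only if $\depth(\phi)=0$, in which case both sides remain $0$ once $p$ is large. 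So $\depth(\theta(\phi))=\depth(\phi)$, and stringing the equalities together gives $\depth(\pi)=\depth(\phi)$.

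\textbf{The main obstacle} will be the careful verification that none of the auxiliary twists — by $\xi_{V}$ on the automorphic side and by ${\chi'}^{-1}\chi$ (and the extra $\oplus\chi$) on the Galois side — perturbs the depth. The cleanest route is to show all these characters are of depth $0$: $\xi_{V}$ factors through $\det$ via a character that is trivial on $E_{L}^{+}$ by Proposition \ref{prop:depth0}, hence depth $0$; and $\chi$, ${\chi'}^{-1}\chi$ are at worst tamely ramified because their restrictions to $F^{\times}$ are powers of $\varepsilon_{E/F}$, which is tame when $p\neq2$, while the ambient condition $p>2n+2$ ensures the relevant Moy--Prasad and ramification filtrations behave well enough that $\depth(\tau\otimes\eta)=\depth(\tau)$ whenever $\depth(\eta)=0<\depth(\tau)$ and $\depth(\tau\otimes\eta)=0$ when $\depth(\tau)=0$ (using that a depth-zero representation twisted by a depth-zero character stays depth zero). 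One must also confirm that the isomorphism $\G'_{\beta}\cong\G'_{1}$ and the identification of packets via Lemma \ref{lem:odd} is compatible with depth — this is automatic since an inner automorphism of a $p$-adic group preserves the Bruhat--Tits building and Moy--Prasad filtrations up to conjugacy, hence preserves depth. Assembling these routine but delicate depth-invariance statements is where the real work lies; once they are in hand the theorem follows by the diagram-chase sketched above.
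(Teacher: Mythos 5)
Your overall architecture (Pan's depth preservation $+$ Gan--Ichino $+$ the quasi-split case for $\G'$, with the splitting comparison as the bridge) is the same as the paper's, but two of your key depth-invariance claims are not justified and, as stated, are false in general. First, you assert that $\chi$ and ${\chi'}^{-1}\chi$ are ``at worst tamely ramified because their restrictions to $F^{\times}$ are powers of $\varepsilon_{E/F}$.'' A character of $E^{\times}$ whose restriction to $F^{\times}$ is tame (or trivial) can perfectly well be wildly ramified: the conditions $\chi|_{F^{\times}}=\varepsilon_{E/F}^{\dim V'}$ leave the behaviour of $\chi$ on $1+\mathfrak{p}_{E}$ completely unconstrained. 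Consequently your claimed equality $\depth(\theta(\phi))=\depth(\phi)$ does not follow, and for a wild choice of $\bm{\chi}$ it is simply wrong. Second, you read Proposition \ref{prop:depth0} as saying that the splitting-ratio character $\xi$ is trivial on $E_{L}^{+}$, hence of depth zero; the proposition says the opposite: $\xi'_{V'}|_{E_{L}^{+}}=\chi^{+}_{V'}$, i.e.\ $\xi$ agrees with $\chi$ on the pro-$p$ part, so $\xi$ is exactly as wild as $\chi$ is. The whole point of the paper's Section \ref{sec:compare} is that the possibly wild parts of $\xi$ and $\chi$ \emph{cancel}: the depth-zero objects are the combinations $\chi\xi_{E^{\times}}^{-1}$ and $\chi'\xi_{E^{\times}}'^{-1}$, not $\chi$, $\chi'$, $\xi$, $\xi'$ individually.

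There is also a bookkeeping error that interacts with this: Proposition \ref{prop:compare} does not say that Kudla's and Pan's theta lifts of the \emph{same} $\pi$ differ by a twist by $\xi_{V}$; it says that Pan's lift of $\pi$ equals Kudla's lift of $\pi\otimes\xi'$ twisted by $\xi^{-1}$, so the source representation is twisted as well. Because $\xi'$ need not have depth zero, you cannot pass from $\depth(\theta_{\alpha,\beta,\psi}(\pi))$ to $\depth(\theta^{\chi,\chi'}_{\alpha,\beta,\psi}(\pi))$ as you do. The paper handles this by combining Proposition \ref{prop:compare} with Proposition \ref{prop:twist} (compatibility of the LLC with character twists) to compute the $L$-parameter of the Pan-normalized lift as $\phi\otimes(\xi'_{E^{\times}}\chi'^{-1})\otimes(\chi\xi_{E^{\times}}^{-1})\oplus\chi\xi_{E^{\times}}^{-1}$, and only then invokes Proposition \ref{prop:depth0} to see that the twisting characters occurring in this expression have depth zero, whence the depth equals $\depth(\phi)$. (Your argument could be rescued by \emph{choosing} $\bm{\chi}$ of depth zero at the outset, which is possible since $p\neq2$, and then noting via Proposition \ref{prop:depth0} that $\xi,\xi'$ are also depth zero; but that is an additional choice and argument you did not make, and the tameness you invoke does not come for free from the restriction-to-$F^{\times}$ condition.)
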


\begin{rem}\label{rem:prime}
In particular, when the residual characteristic $p$ of $F$ is greater than $2n+1$, this assumption is satisfied by Corollary \ref{cor:odd}.
Here note that the assumption of Corollary \ref{cor:odd} demands that $p$ is greater than $2n+2$.
However, since $p$ is a prime number and $n$ is a positive integer, $p$ is greater than $2n+2$ if and only if $p$ is greater than $2n+1$.
\end{rem}

\begin{proof}
Our task is to prove that, for every $L$-parameter $\phi$ of $\G$ and a member $\pi$ of $\Pi_{\phi}$, we have $\depth(\pi)=\depth(\phi)$.
However, since the local Langlands correspondence for non-tempered representations are constructed by using the theory of Langlands quotients (Theorem \ref{thm:LLC} (4)) and the depth of representations are preserved by taking Langlands quotients, it suffices to consider only the case where $\phi$ is tempered (see \cite[Section 4.3]{Oi:2018a} for details).

Let $\phi$ be a tempered $L$-parameter of $\G$ and $\pi$ be an element of $\Pi_{\phi}$.
Here recall that the Vogan $L$-packet $\Pi_{\phi}$ consists of representations of $\G_{\alpha}(F)$ for a pure inner form $\alpha$ of $\G$.
Let $\alpha$ be the pure inner form of $\G$ such that $\pi$ belongs to $\Pi(\G_{\alpha})$ (namely, $\pi\in\Pi_{\phi}^{\alpha}$).
Then, by Theorem \ref{thm:GI}, there exist a pure inner form $\beta$ of $\G'$ such that the small theta lift $\theta_{\alpha,\beta,\psi}^{\chi,\chi'}(\pi)$ of $\pi$ to $\G'_{\beta}(F)$ with respect to Kudla's splittings belongs to the $L$-packet $\Pi_{\theta(\phi)}^{\beta}$.
Let $\beta$ be such a pure inner form.
We denote the ratio of Kudla's splitting $\G_{\alpha}(F)\rightarrow\widetilde{\G_{\alpha}(F)}$ (resp.\ $\G'_{\beta}(F)\rightarrow\widetilde{\G'_{\beta}(F)}$) to Pan's splitting by $\xi'$ (resp.\ $\xi$).
Then, by Proposition \ref{prop:compare}, the small theta lift of $\pi$ from $\G_{\alpha}(F)$ to $\G'_{\beta}(F)$ with respect to Pan's splittings is given by
\[
\theta_{\alpha,\beta,\psi}^{\chi,\chi'}(\pi\otimes\xi')\otimes\xi^{-1}.
\]
Since the local theta correspondence with respect to Pan's splittings preserves the depth of representations (Theorem \ref{thm:Pan}), we have
\[
\depth(\pi)
=
\depth\bigl(\theta_{\alpha,\beta,\psi}^{\chi,\chi'}(\pi\otimes\xi')\otimes\xi^{-1}\bigr).
\]

We compute the right-hand side.
By the depth preservation of the local Langlands correspondence for odd unitary groups (the assumption of this theorem, see Remark \ref{rem:prime}), we have
\[
\depth\bigl(\theta_{\alpha,\beta,\psi}^{\chi,\chi'}(\pi\otimes\xi')\otimes\xi^{-1}\bigr)
=
\depth\Bigl(\phi_{\theta_{\alpha,\beta,\psi}^{\chi,\chi'}(\pi\otimes\xi')\otimes\xi^{-1}}\Bigr).
\]
Now recall that there exists a character $\xi'_{E^{1}}$ (resp.\ $\xi_{E^{1}}$) of $E^{1}$ satisfying $\xi'=\xi'_{E^{1}}\circ\det$ (resp.\ $\xi=\xi_{E^{1}}\circ\det$).
We denote by $\xi'_{E^{\times}}$ (resp.\ $\xi_{E^{\times}}$) the character of $E^{\times}$ obtained from $\xi'_{E^{1}}$ (resp.\ $\xi_{E^{1}}$) by the isomorphism of Hilbert's theorem 90.
Then, by Proposition \ref{prop:twist} and Theorem \ref{thm:GI}, we have
\begin{align*}
\phi_{\theta_{\alpha,\beta,\psi}^{\chi,\chi'}(\pi\otimes\xi')\otimes\xi^{-1}}
&=
\phi_{\theta_{\alpha,\beta,\psi}^{\chi,\chi'}(\pi\otimes\xi')}\otimes\xi^{-1}_{E^{\times}}\\
&=
\theta(\phi_{\pi\otimes\xi'})\otimes\xi^{-1}_{E^{\times}}\\
&=
\theta(\phi\otimes\xi'_{E^{\times}})\otimes\xi^{-1}_{E^{\times}}\\
&=
\bigl((\phi\otimes\xi'_{E^{\times}}\otimes{\chi'}^{-1}\chi)\oplus\chi\bigr)\otimes\xi_{E^{\times}}^{-1}\\
&=
\phi\otimes(\xi'_{E^{\times}}\chi'^{-1})\otimes(\chi\xi_{E^{\times}}^{-1})\oplus\chi\xi_{E^{\times}}^{-1}.
\end{align*}
By Proposition \ref{prop:depth0}, the depth of characters $\chi\xi_{E^{\times}}^{-1}$ and $\chi'\xi_{E^{\times}}'^{-1}$ are zero.
Namely, the depth of the $L$-parameter 
\[
\phi\otimes(\xi'_{E^{\times}}\chi'^{-1})\otimes(\chi\xi_{E^{\times}}^{-1})\oplus\chi\xi_{E^{\times}}^{-1}
\]
is equal to the depth of $\phi$.
In summary, we get
\[
\depth(\pi)=\depth(\phi).
\]
This completes the proof.
\end{proof}

\end{document}